\def\@tocline#1#2#3#4#5#6#7{\relax
  \ifnum #1>\c@tocdepth 
  \else
    \par \addpenalty\@secpenalty\addvspace{#2}%
    \begingroup \hyphenpenalty\@M
    \@ifempty{#4}{%
      \@tempdima\csname r@tocindent\number#1\endcsname\relax
    }{%
      \@tempdima#4\relax
    }%
    \parindent\z@ \leftskip#3\relax \advance\leftskip\@tempdima\relax
    \rightskip\@pnumwidth plus4em \parfillskip-\@pnumwidth
    #5\leavevmode\hskip-\@tempdima
      \ifcase #1
       \or\or \hskip 2em \or \hskip 2em \else \hskip 3em \fi%
      #6\nobreak\relax
    \hfill\hbox to\@pnumwidth{\@tocpagenum{#7}}\par
    \nobreak
    \endgroup
  \fi}
\newcommand{\DrawLine}{%
	\begin{tikzpicture}
	\path[use as bounding box] (0,0) -- (\linewidth,0);
	\draw[
	dashed,dash phase=2pt]
	(0-\kvtcb@leftlower-\kvtcb@boxsep,0)--
	(\linewidth+\kvtcb@rightlower+\kvtcb@boxsep,0);
	\end{tikzpicture}%
}
\newcommand{\LeftEqNo}{\let\veqno\@@leqno}
\newlength{\lowerhalftmp}
\newtheorem{theorem}{Theorem}[section]
\newtheorem{definition}[theorem]{Definition}
\newtheorem{lemma}[theorem]{Lemma}
\newtheorem{proposition}[theorem]{Proposition}
\theoremstyle{definition}
\newtheorem{remark}[theorem]{Remark}
\newtheorem{question}[theorem]{Question}
\renewcommand{\epsilon}{\varepsilon}
\DeclareMathAlphabet{\mathpzc}{OT1}{pzc}{m}{it}
\newcommand{\dtot}{\partial}
\newcommand{\Z}{\mathbb{Z}}
\newcommand{\bF}{\mathbb{F}}
\newcommand{\R}{\mathbb{R}}
\renewcommand{\qed}{$\hfill \square$ \smallskip \\ }
\newcommand{\id}{\text{id}}
\newcommand{\tqft}{\mathfrak{T}}
\newcommand{\G}{\mathcal{G}}
\newcommand{\F}{\mathcal{F}}
\newcommand{\LL}{\mathcal{L}}
\newcommand{\Kf}{\mathcal{K}}
\newcommand{\Kh}{\operatorname{Kh}}
\newcommand{\Khred}{\widetilde{\operatorname{Kh}}{}}
\newcommand{\CKh}{\operatorname{CKh}}
\newcommand{\CKhred}{\widetilde{\operatorname{CKh}}}
\newcommand{\CKhtred}{\widetilde{\operatorname{CKh}}{}_\tau}
\newcommand{\Kht}{\operatorname{Kh}_\tau}
\newcommand{\Khredt}{\widetilde{\operatorname{Kh}}{}_\tau}
\newcommand{\Htredj}{\widetilde{\bf H}_\tau^j}
\newcommand{\CKht}{\operatorname{CKh}_\tau}
\newcommand{\Ht}{\mathbf{H}_\tau}
\def\co{\colon\thinspace}
\newcommand{\cobIIOOIOIOIIOO}
	{\raisebox{-2pt}{\includegraphics[scale=0.1]{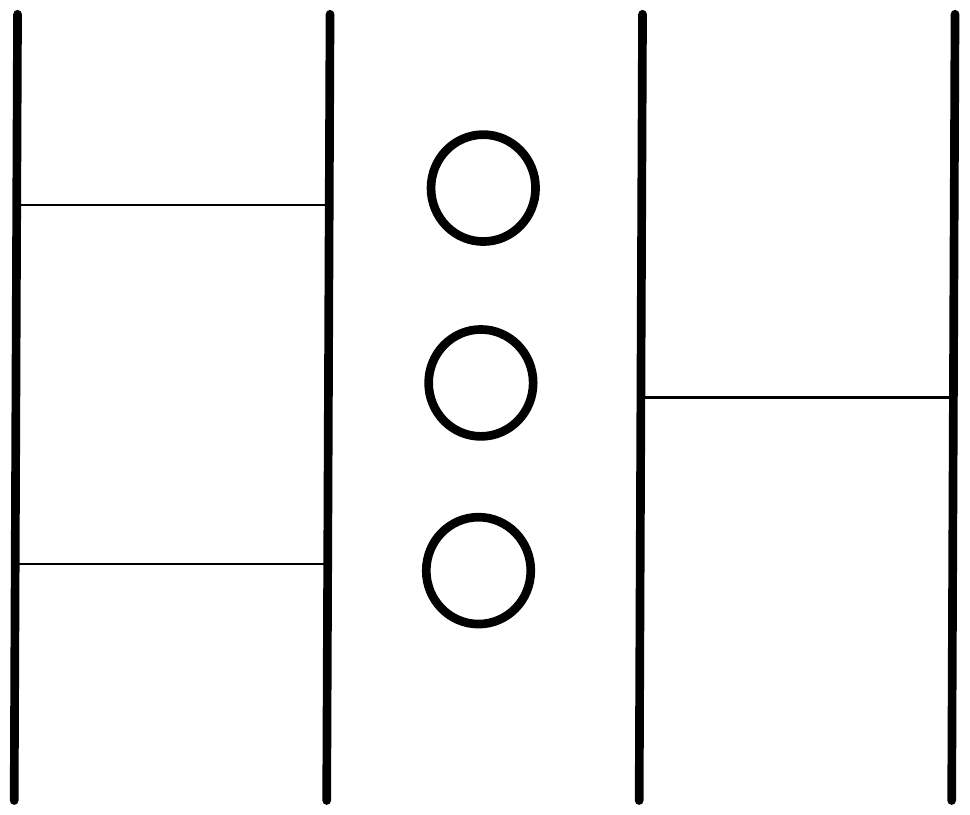}}}
\newcommand{\cobOIOOIIOOIIOI}
{\raisebox{-2pt}{\includegraphics[scale=0.1]{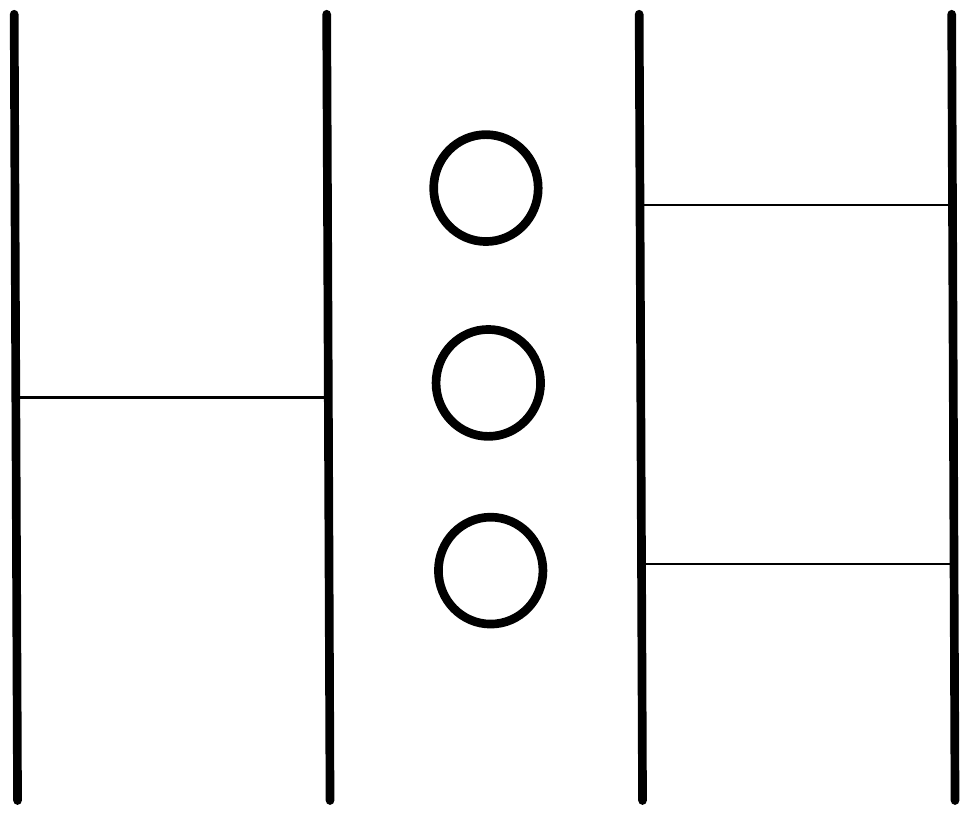}}}
\newcommand{\cobOOIIOOIIOOII}
{\raisebox{-2pt}{\includegraphics[scale=0.1]{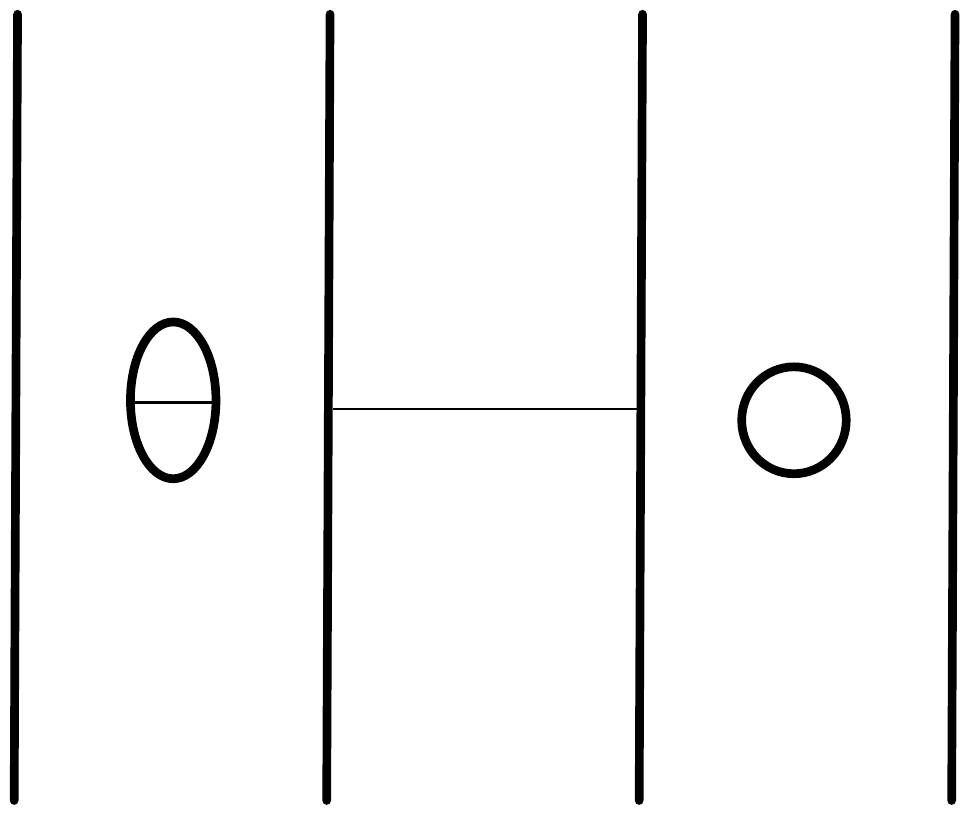}}}
\newcommand{\cobIIOOIIOIOOIO}
{\raisebox{-2pt}{\includegraphics[scale=0.1]{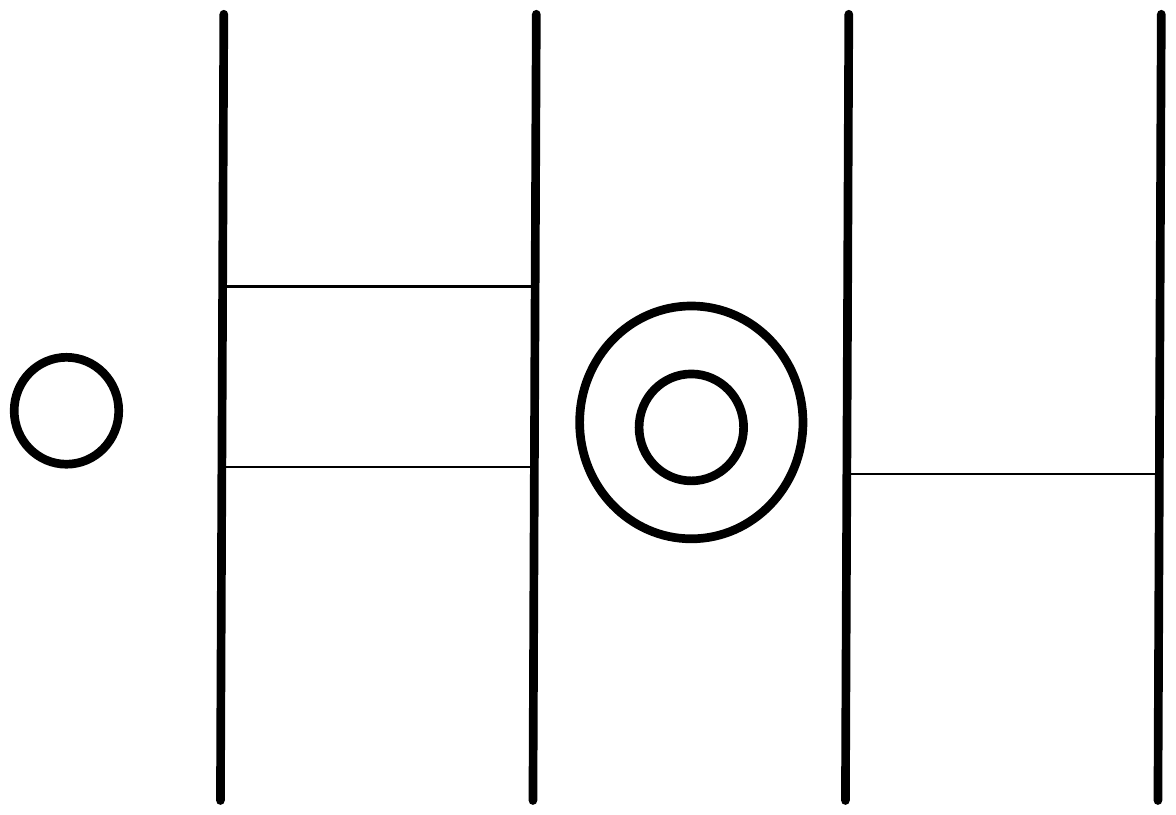}}}
\newcommand{\cobIIOOIIOIOIOO}
{\raisebox{-2pt}{\includegraphics[scale=0.1]{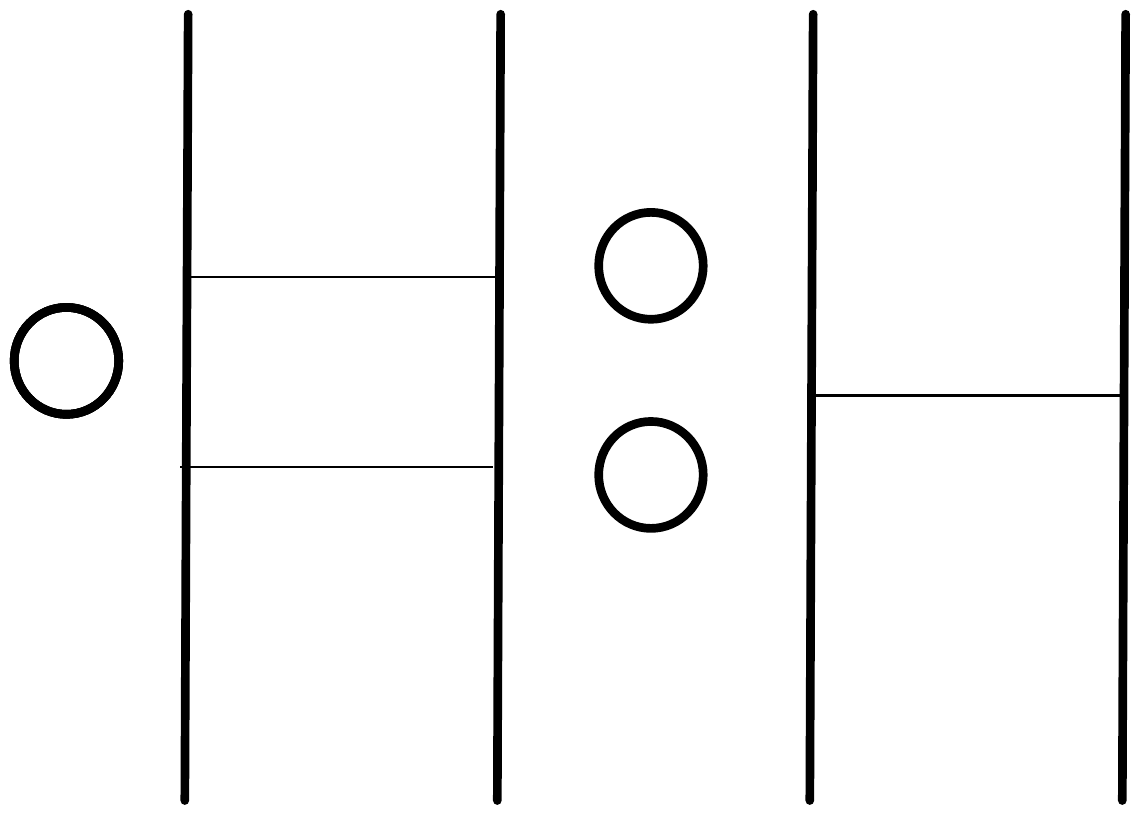}}}
\newcommand{\cobIOIIOIOOIIOO}
{\raisebox{-2pt}{\includegraphics[scale=0.1]{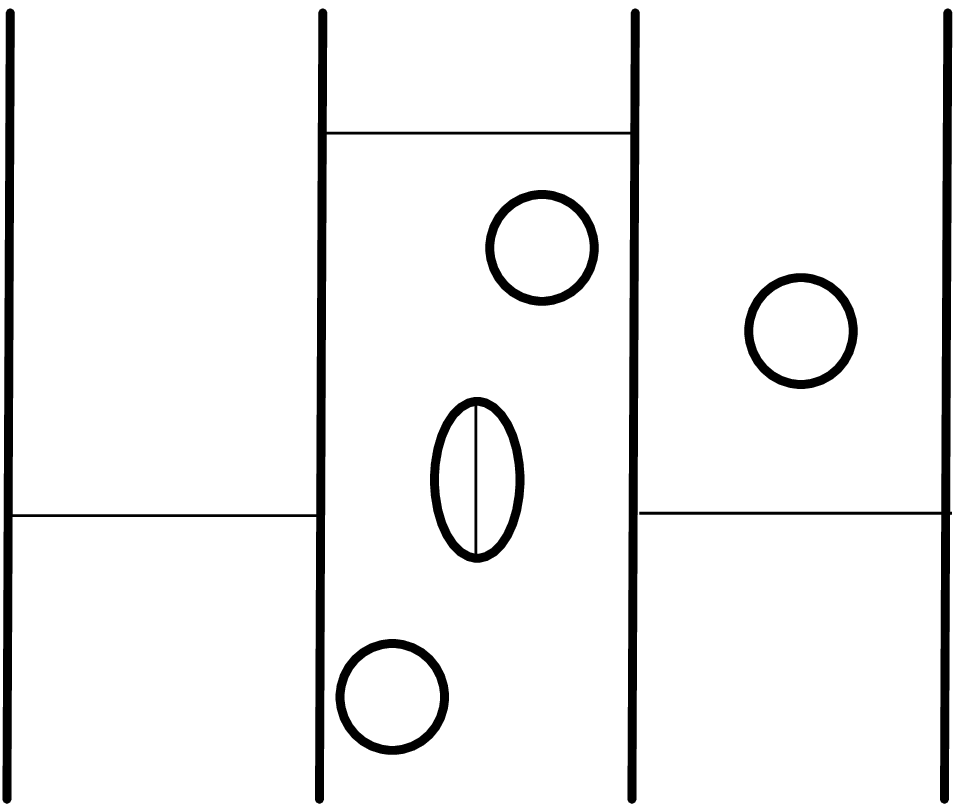}}}
\newcommand{\cobIOOIIIOOIIOO}
{\raisebox{-2pt}{\includegraphics[scale=0.1]{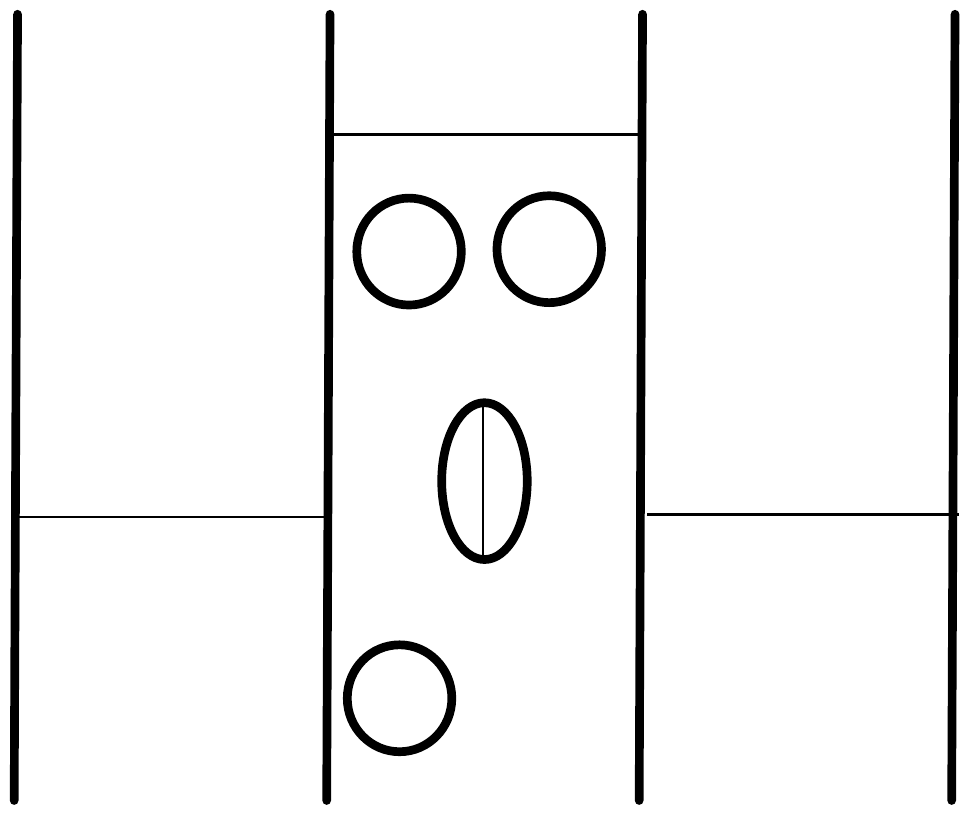}}}
\newcommand{\cobIIOOIIOOIOIO}
{\raisebox{-2pt}{\includegraphics[scale=0.1]{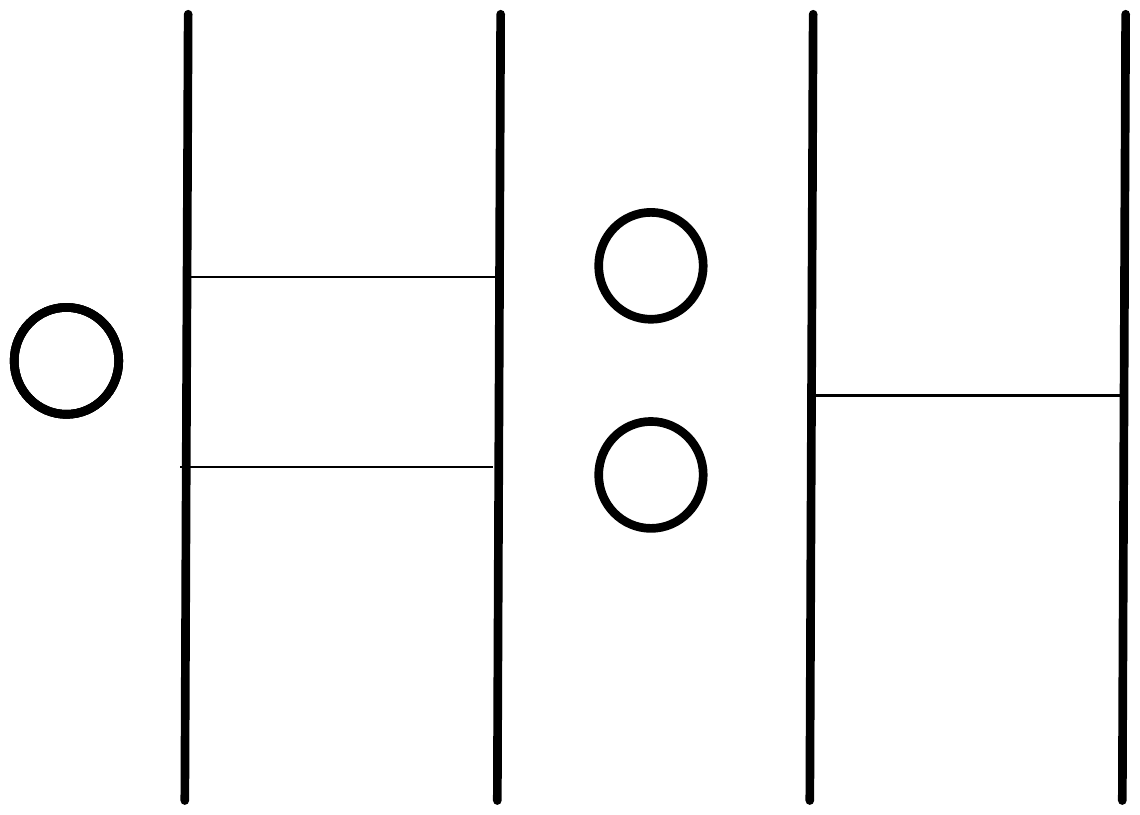}}}
\newcommand{\cobIIIOOIOOIIOO}
{\raisebox{-2pt}{\includegraphics[scale=0.1]{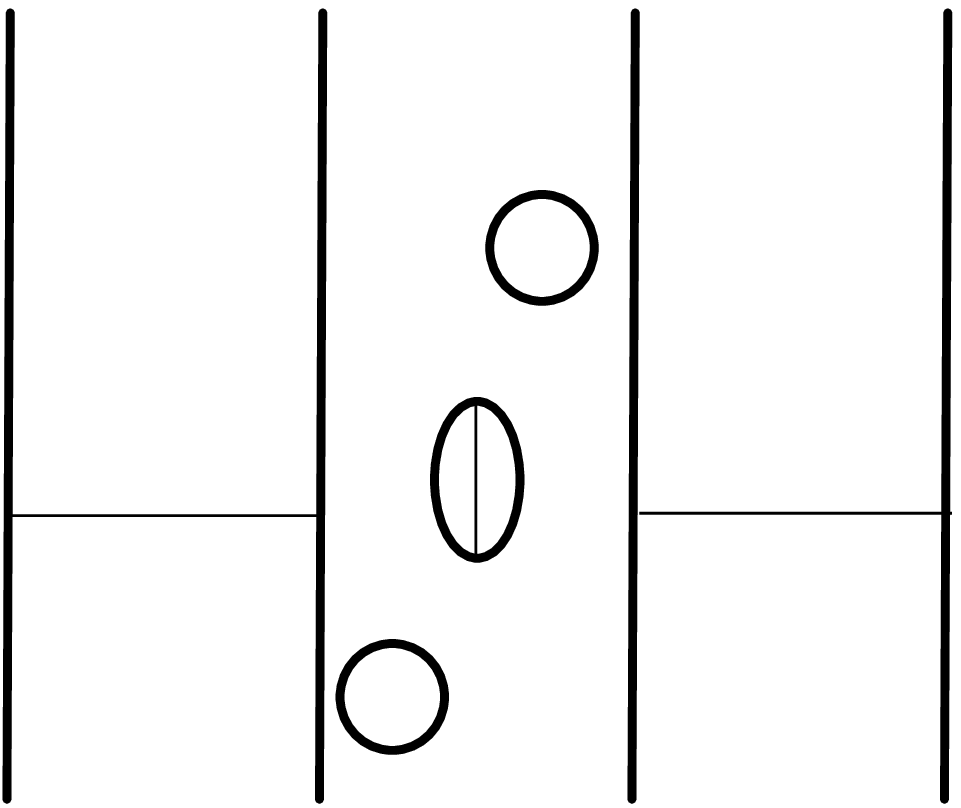}}}
\newcommand{\cobIIOOIIOOIIOO}
{\raisebox{-2pt}{\includegraphics[scale=0.1]{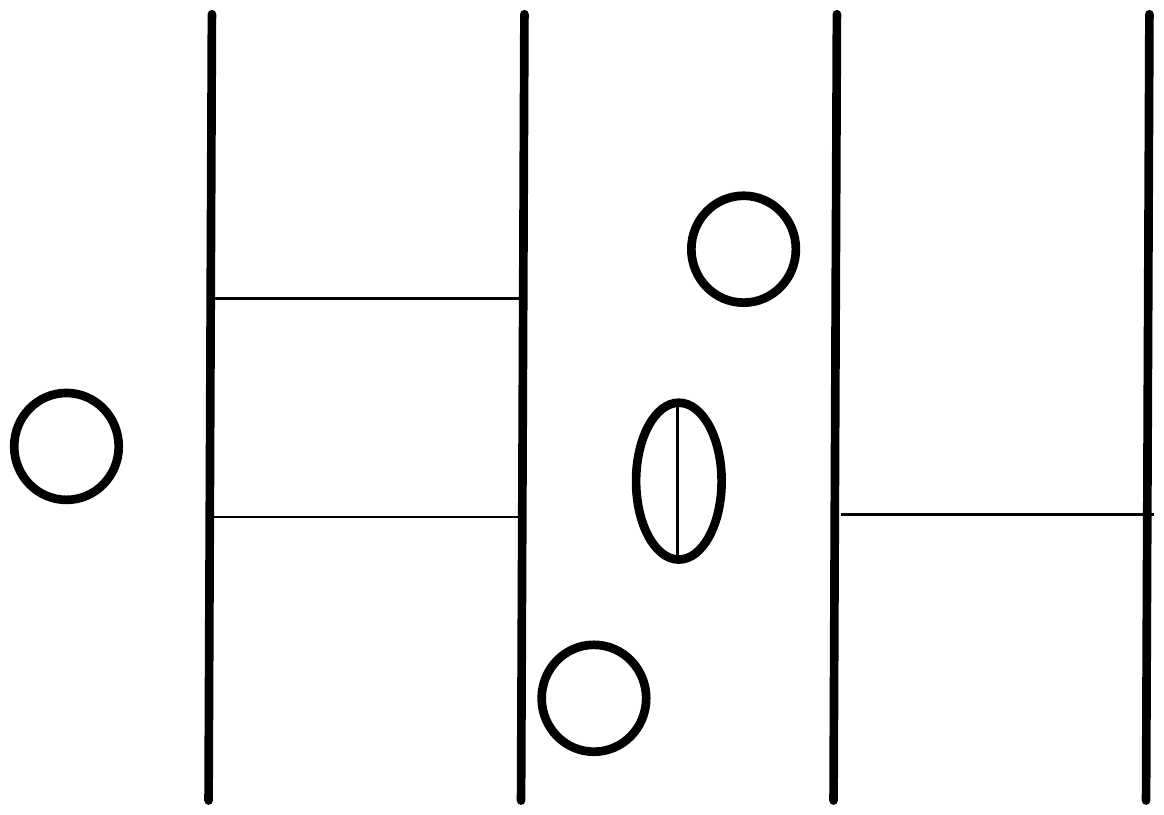}}}
\newcommand{\cobIIOOIIIOOIOO}
{\raisebox{-2pt}{\includegraphics[scale=0.1]{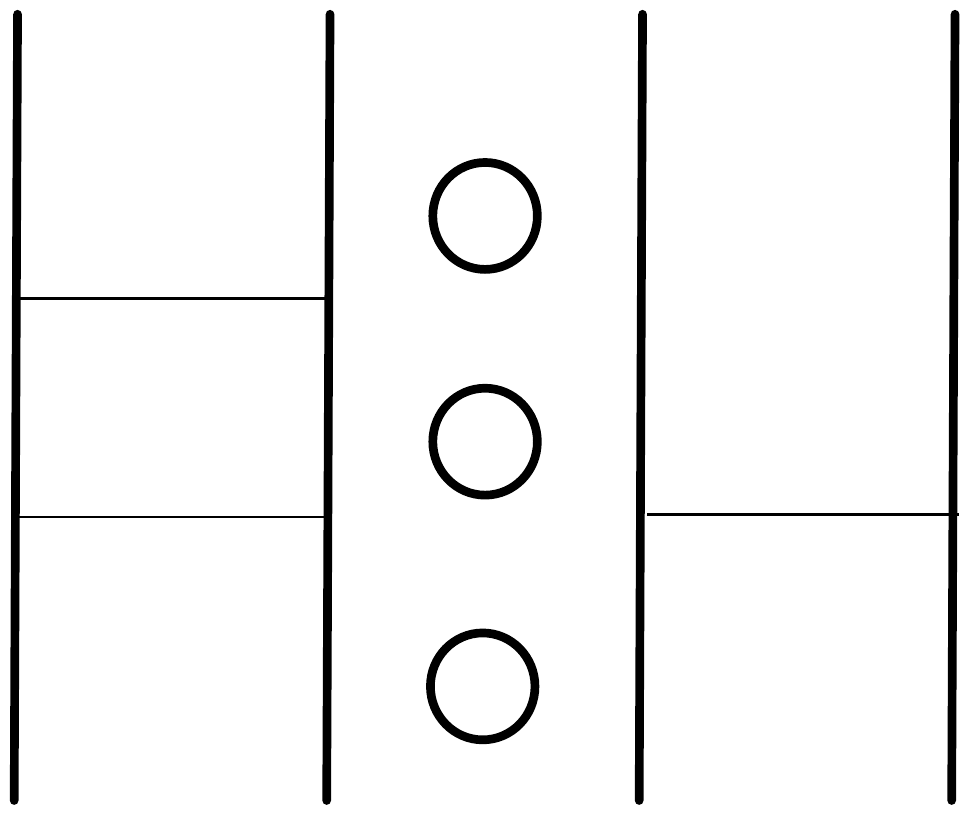}}}
\newcommand{\cobIIOOIIIOIOOO}
{\raisebox{-2pt}{\includegraphics[scale=0.1]{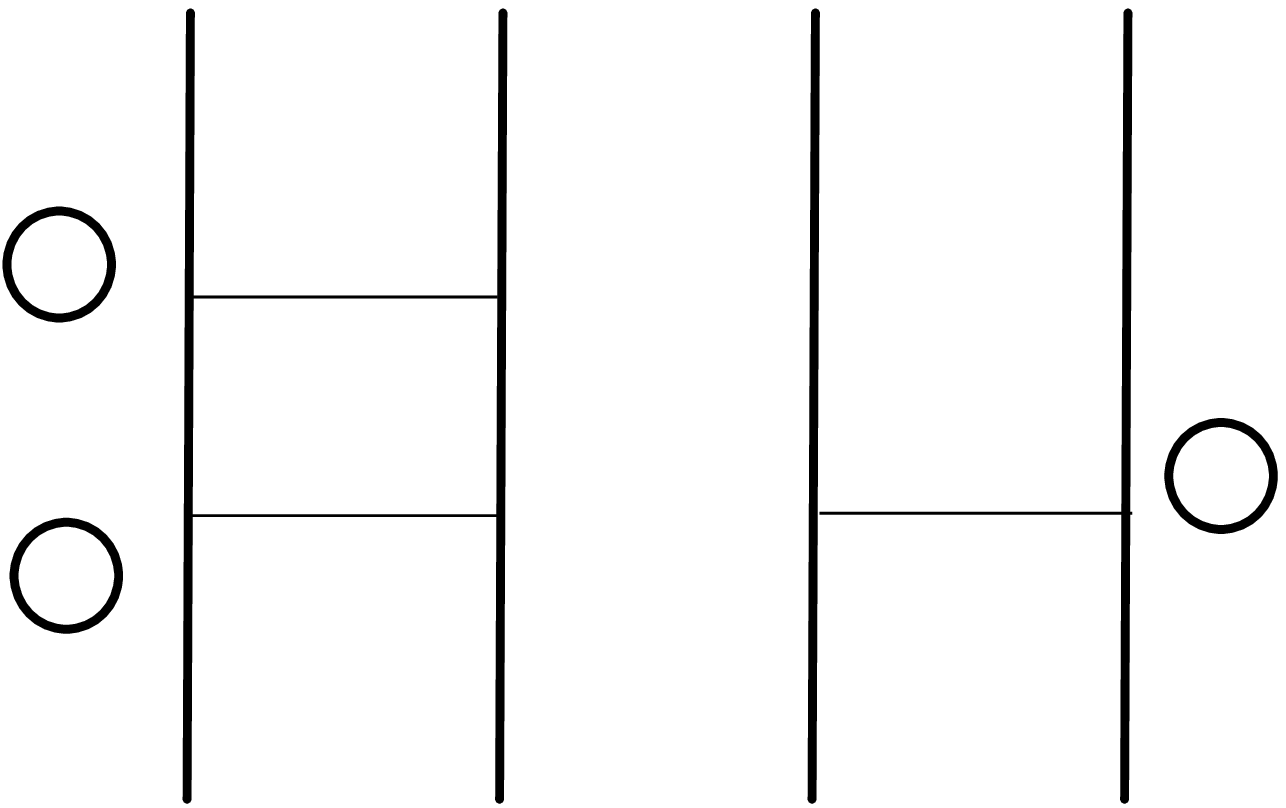}}}
\newcommand{\cobIOOOIIOOIIOI}
{\raisebox{-2pt}{\includegraphics[scale=0.1]{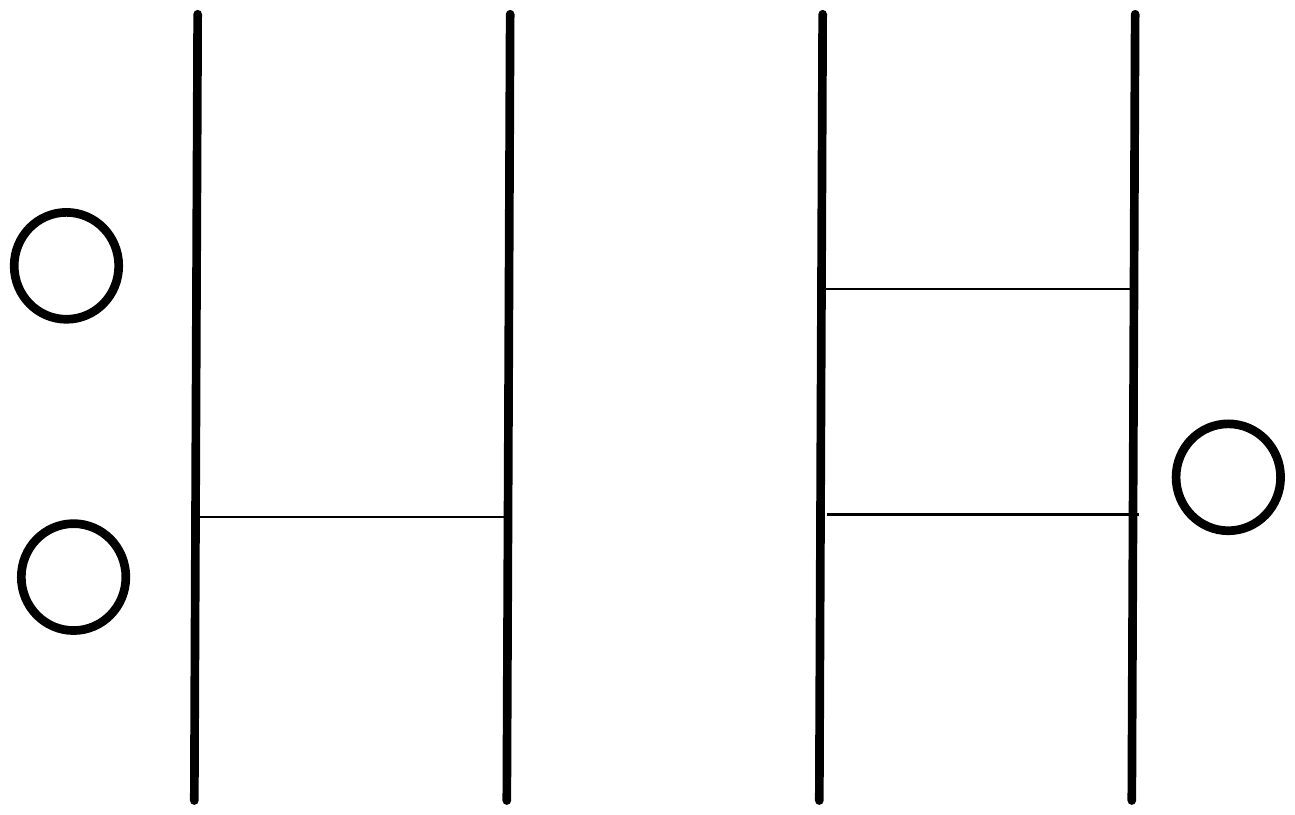}}}
\newcommand{\cobIIOOOIOOIIOI}
{\raisebox{-2pt}{\includegraphics[scale=0.1]{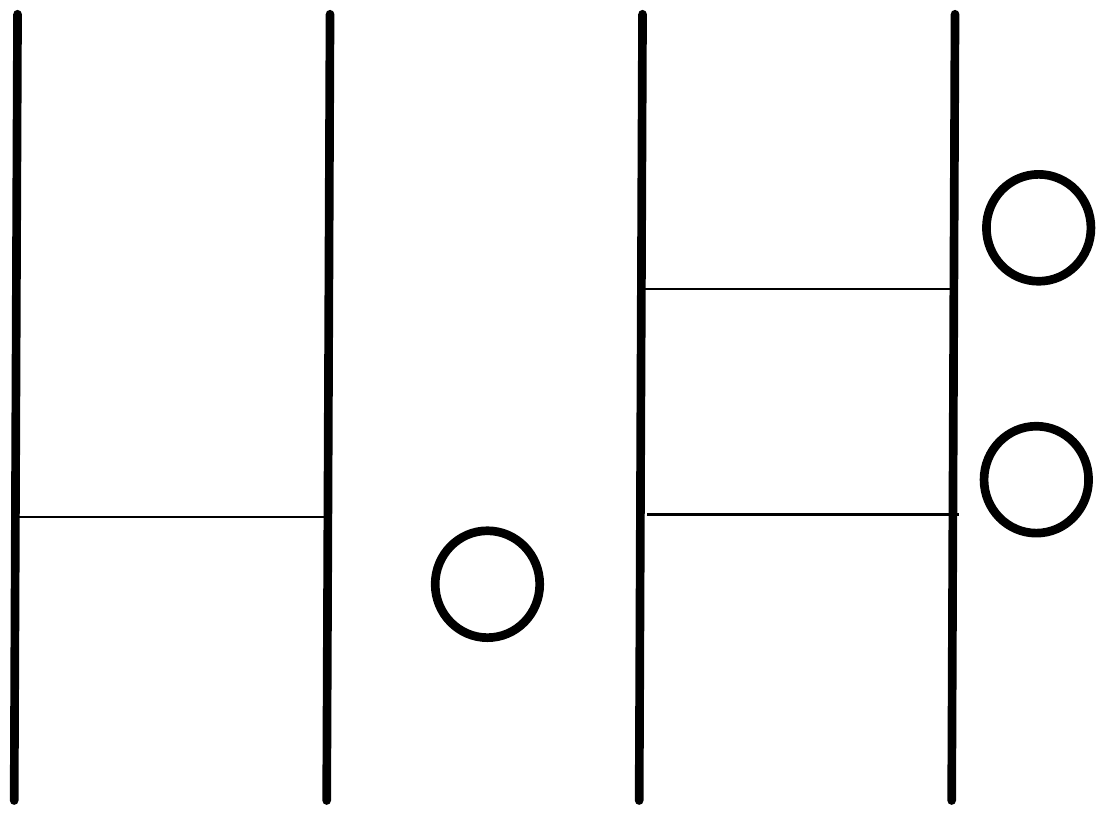}}}
\newcommand{\cobIIOOOIOOIIOO}
{\raisebox{-2pt}{\includegraphics[scale=0.1]{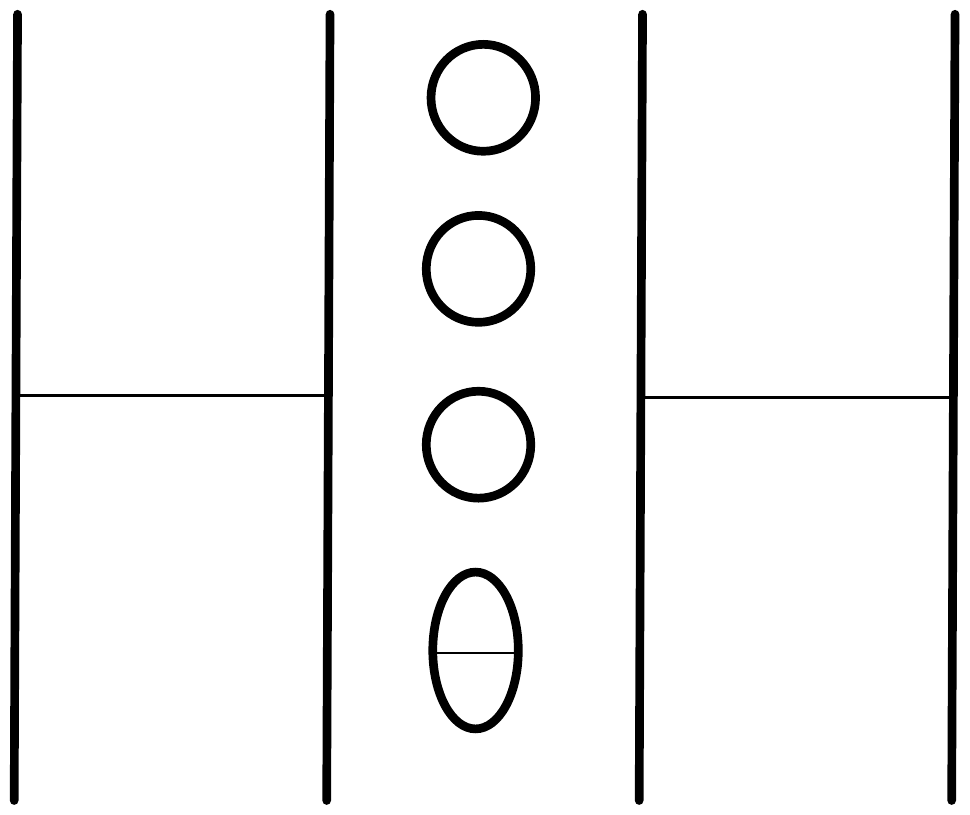}}}
\newcommand{\cobA}
{\raisebox{-2pt}{\includegraphics[scale=0.1]{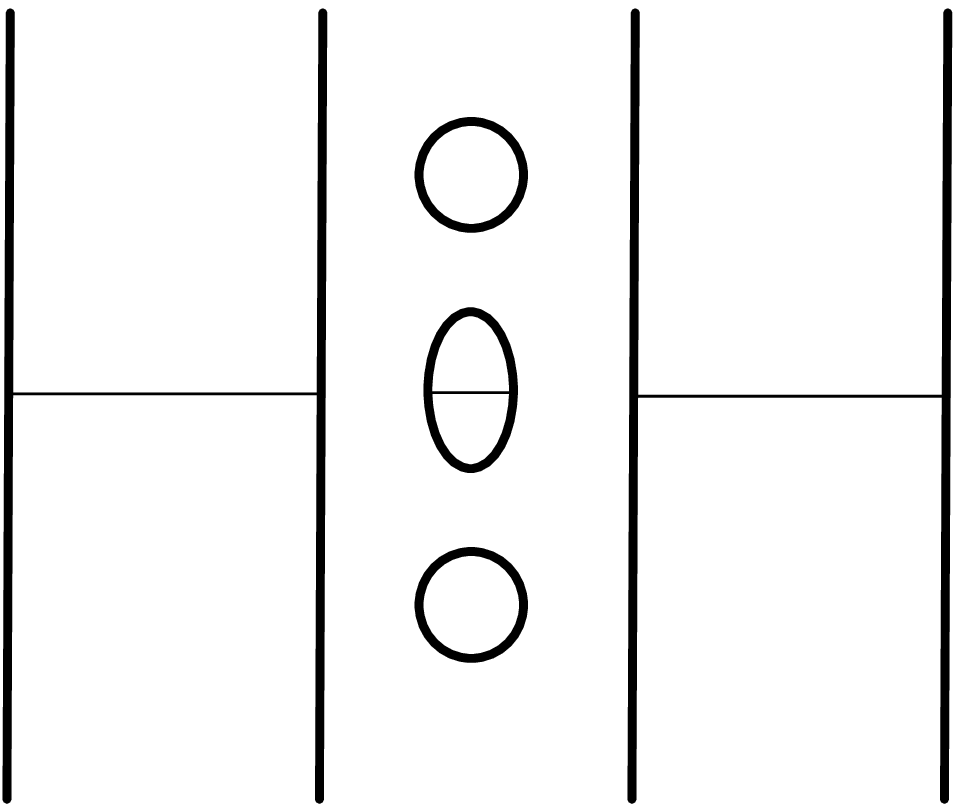}}}
\newcommand{\cobB}
{\raisebox{-2pt}{\includegraphics[scale=0.1]{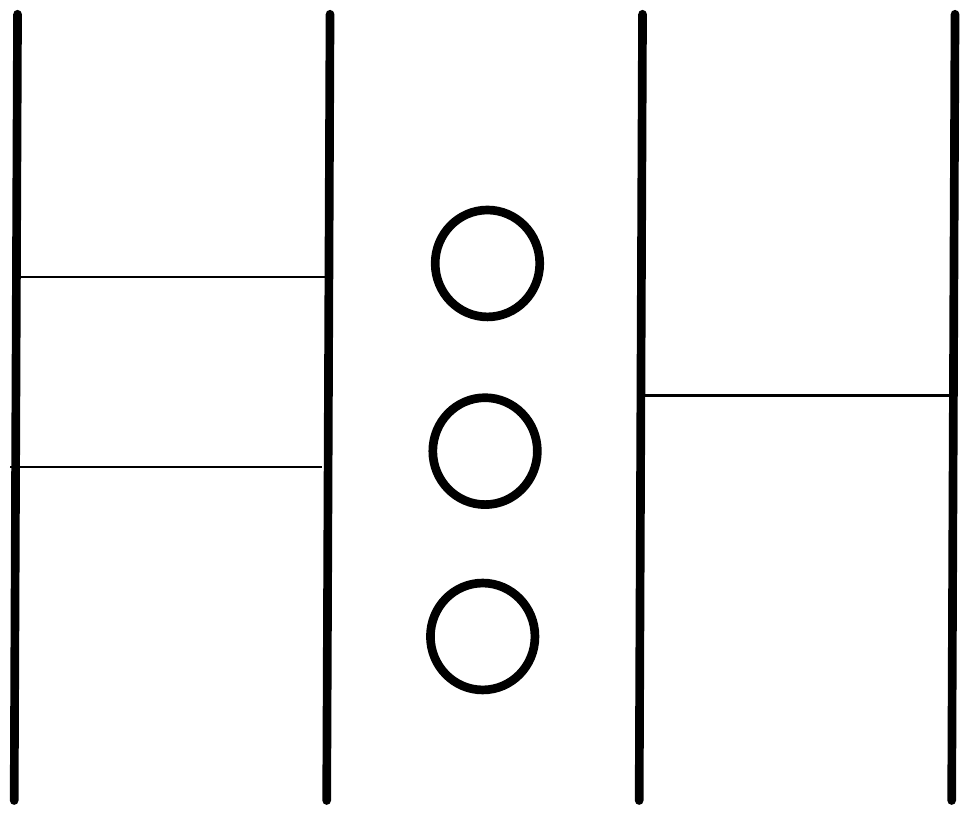}}}
\newcommand{\cobC}
{\raisebox{-2pt}{\includegraphics[scale=0.1]{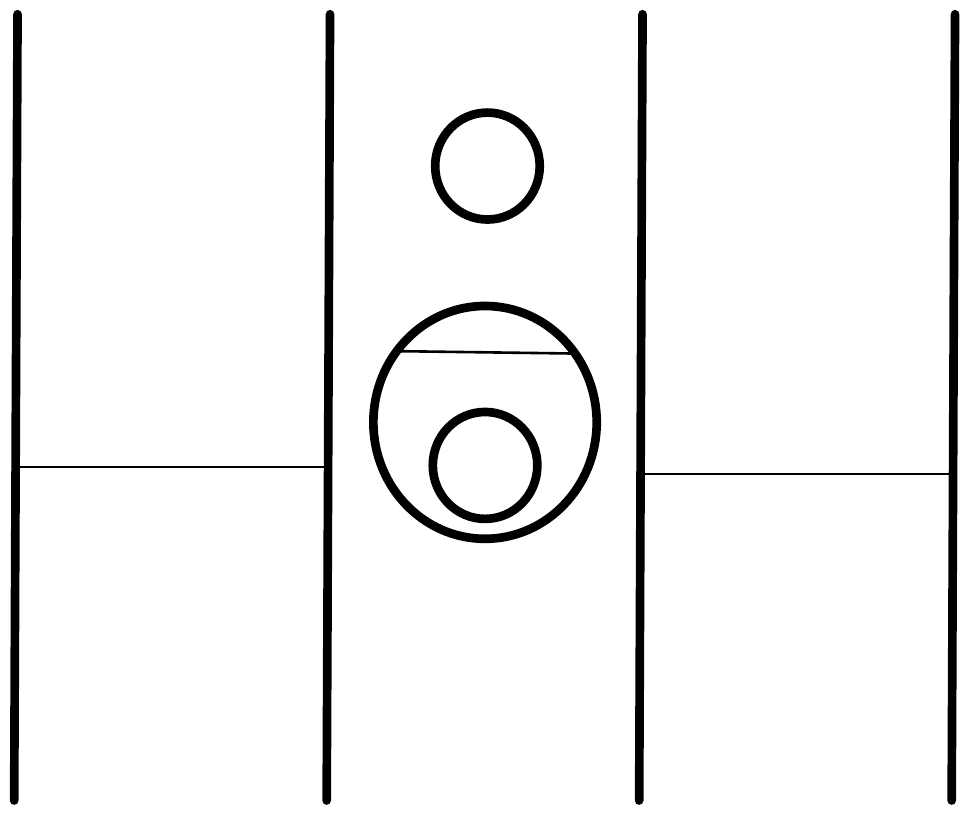}}}
\newcommand{\cobD}
{\raisebox{-2pt}{\includegraphics[scale=0.1]{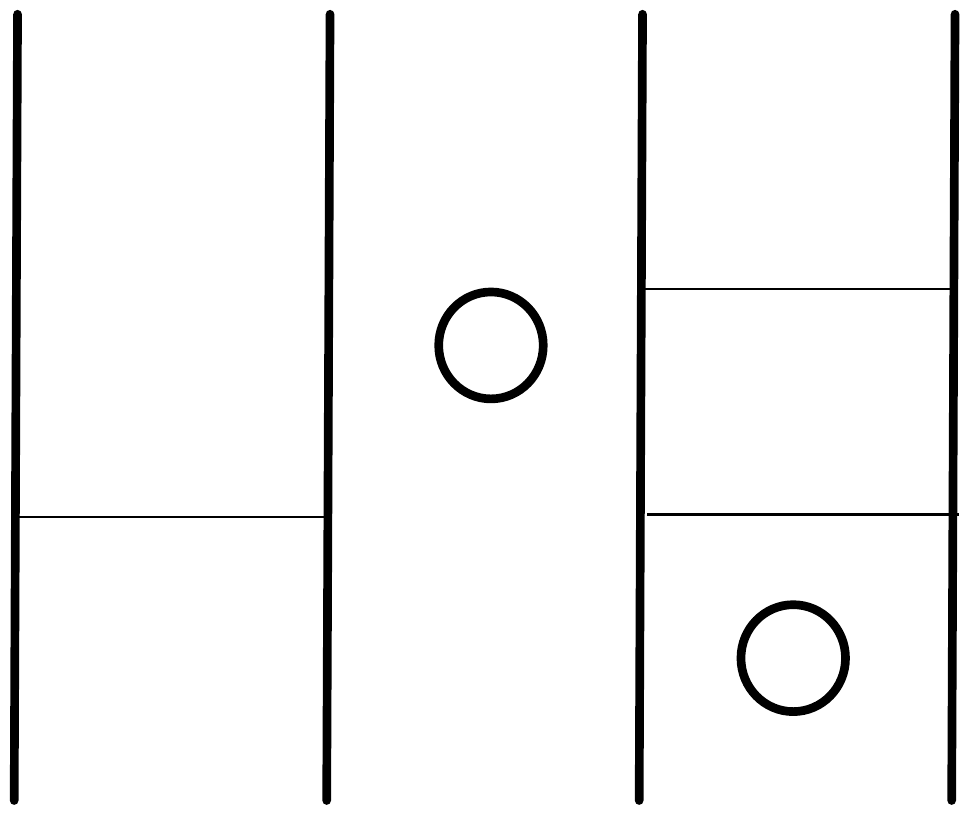}}}
\def\co{\colon\thinspace}
\begin{document}
\thispagestyle{empty}
\title[A refinement of Khovanov homology]{A refinement of Khovanov homology}
\author{Andrew Lobb} 
\address{Mathematical Sciences,
Durham University,
UK}
\email{andrew.lobb@durham.ac.uk}
\urladdr{http://www.maths.dur.ac.uk/users/andrew.lobb/}
\author{Liam Watson} 
\thanks{AL was partially supported by the EPSRC grant
	EP/K00591X/1 and by the Leverhulme Research Programme Grant SPOCK.
	LW was partially supported by a Marie Curie career integration grant, by a Canada Research Chair, and by an NSERC discovery/accelerator grant; AL and LW were organizers and participants of a program at the Isaac Newton Institute while part of this work was completed and acknowledge partial support from EPSRC grant EP/K032208/1; additionally, LW was partially supported by a grant from the Simons Foundation while at the Isaac Newton Institute}
\address{Mathematics Department, University of British Columbia, Canada}
\email{liam@math.ubc.ca}
\urladdr{http://www.math.ubc.ca/~liam}

\begin{abstract}
We refine Khovanov homology in the presence of an involution on the link.  This refinement takes the form of a triply-graded theory, arising from a pair of filtrations. We focus primarily on strongly invertible knots and show, for instance, that this refinement is able to detect mutation.
\end{abstract}

\maketitle

\noindent Khovanov's categorification \cite{kh1} of the Jones polynomial has been well-studied and developed in various 
directions. Despite this, the original cohomology theory introduced by Khovanov is still not well-understood geometrically. On the other hand, improving on the Jones polynomial, this invariant is known to contain important geometric information. This is perhaps best illustrated by Rasmussen's $s$-invariant \cite{ras3}, which gave new lower bounds on the smooth slice genus. The $s$-invariant has recently been put to use in Piccirillo's elegant proof that the Conway knot is not slice \cite{Piccirillo}, for instance. In order to obtain this invariant, one perturbs the differential and so trades the quantum grading for a quantum filtration.  Indeed, a key strength of Khovanov's categorification lies in its amenability to deformation by perturbation of the differential. Our goal is to apply this apparent strength by perturbing Khovanov's differential in the presence of an involution on the link.  Although the perturbation itself will be unsurprising to those who have thought about equivariant cohomology, the construction gives rise to a surprising filtration that yields a triply graded invariant.
We shall first give an overview of the invariant, together with some properties and applications, postponing a rigorous definition and the proof of invariance.   



\section{Overview and summary}
\label{sec:intro}

 \parpic[r]{
 \begin{minipage}{57mm}
 \centering
 \includegraphics[scale=0.85]{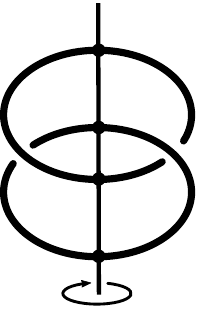}
 \captionof{figure}{The Hopf link is involutive.}
 \label{fig:hopf}
  \end{minipage}%
  }
 \subsection*{Involutions on links} 
A link in $S^3$ is \emph{involutive} if it is fixed, as a set, by some orientation-preserving involution of $S^3$.  Equivalently (the equivalence is discussed later), we can fix some standard involution $\tau\co S^3\to S^3$ and consider links that are equivariant relative to it---this will usually be our point of view in this paper.   The resolution of the Smith Conjecture showed that the fixed point set of such an involution $\tau$ is necessarily an unknotted closed curve \cite{SmithConjecture,Waldhausen1969}.  We take involutive links to be equivalent if they are isotopic through involutive links; see Definition \ref{def:involutive}. In this way, a given link type may admit different involutions (see for example Figure \ref{fig:2-fig-8s}).


We work with the Khovanov cohomology $\Kh(L)$ associated with an oriented link $L$ with coefficient ring being the two-element field $\bF$. Recall that this is a finite dimensional  bigraded vector space $\Kh(L)\cong\oplus_{i,j\in\Z}\Kh^{i,j}(L)$ from which the Jones polynomial arises as 
\[{V}_L(q) = \textstyle\sum_{i,j\in\Z}(-1)^iq^j \dim\big(\Kh^{i,j}(L)\big)\]
normalized so that ${V}_U(q)=q^{-1}+q$ where $U$ is the unknot. Khovanov cohomology is the cohomology of a cochain complex defined from a diagram of the link.  If $D$ is a link diagram, let $\CKh(D)=\oplus_{i,j \in \Z}\CKh^{i,j}(D)$ denote the associated cochain complex with differential $d \co \CKh^{i,j}(D)\to \CKh^{i+1,j}(D)$.

When $D$ is an involutive link diagram (that is, a link diagram with a vertical axis of symmetry as in Figure \ref{fig:hopf}, for example) we equip $\CKh(D)$ with the perturbed differential $\partial = d + d_\tau$.  By deliberate abuse of notation, we write $\tau$ for an involution on $\CKh(D)$ induced by the symmetry of the diagram, and $d_\tau=\id+\tau$ for a new differential $d_\tau\co \CKh(D)\to \CKh(D)$ commuting with the Khovanov differential $d$.

The deformed complex with differential $\partial$, which we will henceforth denote by $\CKht(D)$, decomposes according to the quantum grading $\CKht(D) = \oplus_{j\in\Z}\CKht^j(D)$ and so its cohomology $\Ht(D)$ inherits the quantum grading $\Ht(D) = \oplus_{j \in \Z} \Ht^j(D)$ too.  In this paper we shall show that $\CKht(D)$ may be further equipped with two filtrations, which we will now briefly describe.

\labellist
  \pinlabel {$\F^0$} at 12 63  \pinlabel {$\F^1$} at 36 40  \pinlabel {$\F^2$} at 63 15
   \pinlabel {$\G^0$} at 393 15  \pinlabel {$\G{^\frac{1}{2}}$} at 415 40  \pinlabel {$\G^1$} at 440 63
       \pinlabel {$\tau$} at 103 110  \pinlabel {$\tau$} at 347 110
       \pinlabel {$\tau$} at 206 180  \pinlabel {$\tau$} at 246 180	
         \endlabellist
\begin{figure}[t]
\includegraphics[scale=0.75]{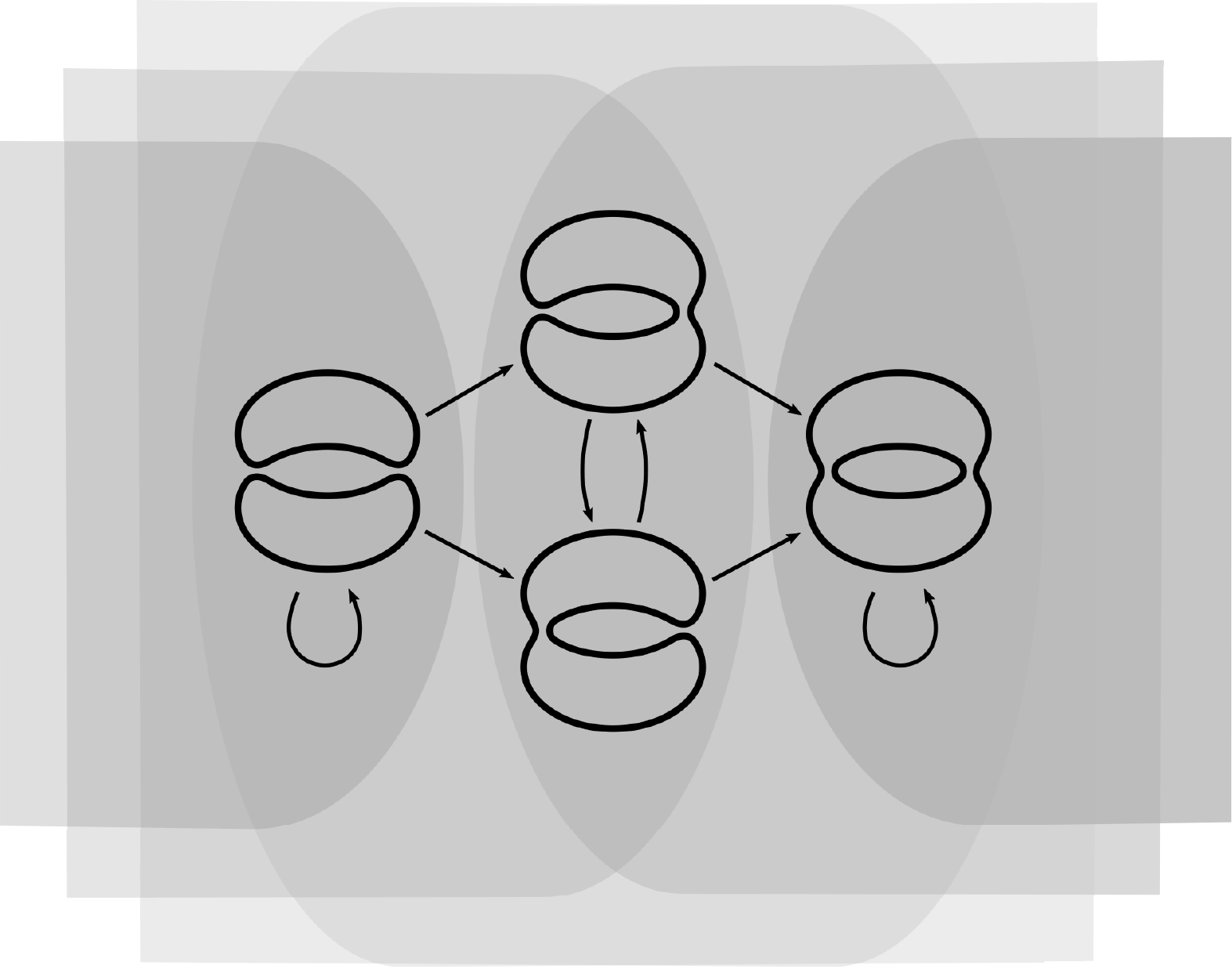}
\caption{Two filtrations on the cube of resolutions for an involutive diagram of the Hopf link (shown in Figure \ref{fig:hopf}) oriented so that the linking number between the components is 1. New edges added to the cube by the involution are labelled by $\tau$, while unlabelled edges correspond to the usual cobordisms appearing in Khovanov's original construction.}\label{fig:hopf-cube-of-resolutions}
\end{figure}

The first of these is inherited from the cohomological grading on $\CKh(D)$, which must necessarily now be relaxed to a filtration.  We write this as $\{\F^i\CKht(D)\}$ so that $\F^{i-1}\CKht(D)\subseteq\F^i\CKht(D)$ and $\partial\F^{i-1}\CKht(D)\subset \F^i\CKht(D)$.  This filtration gives rise to a spectral sequence with the $E^2$ page being Khovanov cohomology $E^2_\F(D) = \Kh(D)$.  We shall show that every page after the first page is an invariant of the involutive link.


The second filtration is less obvious: we consider a half-integer valued filtration by subcomplexes denoted by $\{\G^k\CKht(D)\}$.  With a suitable choice of page indexing convention, we establish that each page after the second page of the induced spectral sequence is an involutive link invariant.  For the purposes of this introduction, this filtration is best illustrated in an example;  see Figure \ref{fig:hopf-cube-of-resolutions} for the pair of filtrations illustrated on the cube of resolutions for the  involutive Hopf link diagram of Figure \ref{fig:hopf}.


Together, these two filtrations induce a bifiltration on the cohomology $\Ht^j(D)$.  Taking the associated graded pieces of the bifiltration we obtain a triply graded link invariant. To summarize, our main result is:

\begin{theorem}
	\label{thm:triply-graded-invariant}
Following the construction as outlined above, there is a triply graded invariant $$\Kht(L)\cong\oplus_{i,j\in\Z, \, k \in \frac{1}{2}\Z}\Kht^{i,j,k}(L)$$ of an involutive link $L$  obtained as the associated graded space to the singly graded and bifiltered cohomology $\Ht(L)$. Furthermore, the construction outlined above gives two spectral sequences, each with bigraded pages.  Every page including and after $E^2_\F(L)=\Kh(L)$ and $E^3_\G(L)$ is an involutive link invariant.
\end{theorem}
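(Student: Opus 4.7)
\smallskip

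\noindent\textbf{Proof proposal.} The plan is to verify that $\partial = d + d_\tau$ is a well-defined differential, that both $\F$ and $\G$ give honest filtrations of the complex $\CKht(D)$, and then to establish invariance by checking each ``equivariant Reidemeister move'' — that is, each move in the list generating involutive isotopy (see Definition \ref{def:involutive}) — respects both filtrations to the appropriate degree. Concretely, I would first record that $d_\tau^2 = (\id+\tau)^2 = \id + \tau^2 = 0$ over $\bF$, that $\tau$ commutes with the Khovanov differential because it comes from a diagrammatic symmetry, and hence $[d,d_\tau]=0$; this gives $\partial^2 = 0$. Next, I would verify that the subspaces $\F^i\CKht(D)$ inherited from the cohomological grading and the subcomplexes $\G^k\CKht(D)$ described before Figure~\ref{fig:hopf-cube-of-resolutions} are closed under $\partial$ (this requires checking that the $d_\tau$ summand of $\partial$ strictly increases the $\F$-filtration level and weakly respects the $\G$-filtration level with the appropriate shift). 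Both of these are essentially book-keeping on the cube of resolutions.

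With the bifiltered complex in hand, the core work is invariance. Following the usual pattern for filtered Khovanov-type theories, I would enumerate the generating moves for involutive isotopy — namely equivariant versions of the three Reidemeister moves occurring either on one side of the axis (done as a pair by symmetry) or transversally across the axis — and for each one construct a chain map $\CKht(D) \to \CKht(D')$ extending the standard Khovanov chain equivalence. The extension comes by symmetrizing: where Khovanov's proof produces a map $f \co \CKh(D) \to \CKh(D')$, the involutive version should be (schematically) $f + \tau f \tau$ plus homotopy corrections needed to intertwine the $d_\tau$-summands. I would then verify that this map is $\F$-filtered (of degree zero) and $\G$-filtered (of degree zero, possibly after adjusting by the appropriate shift conventions).

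The invariance of the various pages is then formal once the filtered equivalence is in hand: a filtered chain map inducing an isomorphism on some page $E^r$ induces isomorphisms on all subsequent pages, and the $E^\infty$ pages combine to give the associated graded $\Kht(L)$. For the $\F$-filtration, one already knows $E^2_\F = \Kh$ is a link invariant, so any filtered map induces an isomorphism there and thus on every later page, yielding the stated invariance from $E^2_\F$ onwards; the additional statement for $\Kht(L)$ then reduces to showing the induced map on $E^\infty_\F \otimes E^\infty_\G$ is canonical. For the $\G$-filtration, the shift needed for invariance only kicks in at $E^3_\G$ because the comparison map between two equivariant diagrams is only $\G$-filtered up to a correction that vanishes after one turn of the spectral sequence.

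The main obstacle I anticipate is the $\G$-filtration invariance. Unlike the $\F$-filtration, which is just the cohomological grading relaxed to a filtration, the $\G$-filtration is half-integer-valued and is defined in terms of the position of resolutions relative to the involution axis; checking that the equivariant Reidemeister move maps respect it requires care, and the fact that invariance starts only at $E^3_\G$ is a genuine loss that must be tracked through the constructions. I expect that handling the equivariant Reidemeister moves performed transversally to the axis — where the diagrams before and after are genuinely different combinatorially near the axis — is the step where the $\G$-filtered equivalence is subtlest and where the ``page $3$'' lower bound is forced.
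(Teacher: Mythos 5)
Your overall scaffolding --- establish $\partial^2=0$, verify the two filtrations, prove a Reidemeister-type invariance statement move by move --- matches the paper's architecture, but several of the specific claims would not go through, and the most important feature of the proof is missing.

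First, your proposed symmetrization $f + \tau f\tau$ is not well-defined for off-axis moves. Bar-Natan's map $f$ for a Reidemeister move acts locally on one tangle region, so as a map out of $\CKh(D)$ it lands in a complex that is neither $\CKh(D)$ nor $\CKh(D')$; the conjugate $\tau f \tau$ lands in yet another complex, and the sum has no natural codomain. What one actually needs is to apply the move on both sides simultaneously, i.e.\ the local tensor product $f\otimes\overline{f}$ (equivalently a \emph{composition}, not a sum, of the two local modifications). This is the construction the paper uses for the off-axis moves IR1--IR3.

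Second, your taxonomy of moves --- ``on one side of the axis (done as a pair)'' and ``transversally across the axis'' --- omits the crucial third type: the \emph{mixed} moves M1, M2, M3 in Figure~\ref{fig:moves}, which involve off-axis and on-axis crossings together. These are the entire difficulty of the paper. In particular, M3 is a six-crossing local move whose $\F$-filtered equivalence is built as a perturbation of a composite of ten ordinary Reidemeister maps. Without identifying these moves your argument has no chance of being complete, since the involutive Reidemeister theorem (Theorem~\ref{thm:R}) requires them.

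Third, and most consequentially: you assume that the chain equivalence can be arranged to be $\G$-filtered directly, perhaps after a uniform shift. The paper finds that this is \emph{impossible} for the mixed moves. The actual strategy (\S\ref{subsec:outline_invariance_proof}) is: build an $\F$-filtered chain map $f$ that induces the Khovanov isomorphism on $E^2_\F$, accept that $f$ is \emph{not} $\G$-filtered, and then choose a homotopy $h$ so that $g = f + \partial h + h\partial$ \emph{is} $\G$-filtered, inducing an isomorphism on $E^3_\G$. Since $g^*=f^*$ on cohomology, this single map on cohomology is simultaneously $\F$-filtered and $\G$-filtered with inverse of the same kind, which is what makes the triply-graded associated graded an invariant. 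Your passing remark about ``homotopy corrections needed to intertwine the $d_\tau$-summands'' gestures at something else --- making a putative sum into a chain map --- and does not anticipate this phenomenon.

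Two smaller but genuine errors: (i) you write that $d_\tau$ ``strictly increases the $\F$-filtration level''; in fact $d_\tau=\id+\tau$ \emph{preserves} the $i$-grading, while the Khovanov differential $d$ is what raises it by one, and it is the $\G$-level that $d$ and the off-axis part of $d_\tau$ raise by half-integers. (ii) Your claim that ``any filtered map induces an isomorphism on $E^2_\F$ because $E^2_\F$ is a link invariant'' is false --- the zero map is filtered --- and the proof must explicitly construct the map so that its $\F$-graded degree-zero part is the Bar-Natan/Khovanov chain equivalence.
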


The construction will be set up more precisely in Section \ref{sec:hom_alg}.

In order to describe the pair of filtrations giving rise to this new grading in Khovanov cohomology, we continue with our explicit example: A cochain complex for the involutive Hopf link is shown in Figure \ref{fig:hopf-complex} with a detailed explanation in the caption.

\labellist
\pinlabel {$\partial$} at 158 152  \pinlabel {$d_\tau$} at 142 138 \pinlabel {$d^\ast$} at 172 138
\small
  \pinlabel {$2$} at -7 17 \pinlabel {$4$} at -7 47  \pinlabel {$6$} at -7 78 \pinlabel {$8$} at -7 108
  \pinlabel {$0$} at 17 -7 \pinlabel {$1$} at 46 -7 \pinlabel {$2$} at  78 -7
   \pinlabel {$0$} at 125 -7 \pinlabel {$1$} at 156 -7 \pinlabel {$2$} at  187 -7
    \pinlabel {$0$} at 234 -7 \pinlabel {$1$} at 265 -7 \pinlabel {$2$} at  296 -7
    \pinlabel {$0$} at 356 52 \pinlabel {$1$} at 387 52 \pinlabel {$2$} at  418 52
     \pinlabel {$0$} at 331 78  \pinlabel {$1$} at 331 108
     \pinlabel {$i$} at 437 60  \pinlabel {$k$} at 337.5 130
     \tiny
                \endlabellist
\begin{figure}[t]
\vspace*{5pt}\includegraphics[scale=0.9]{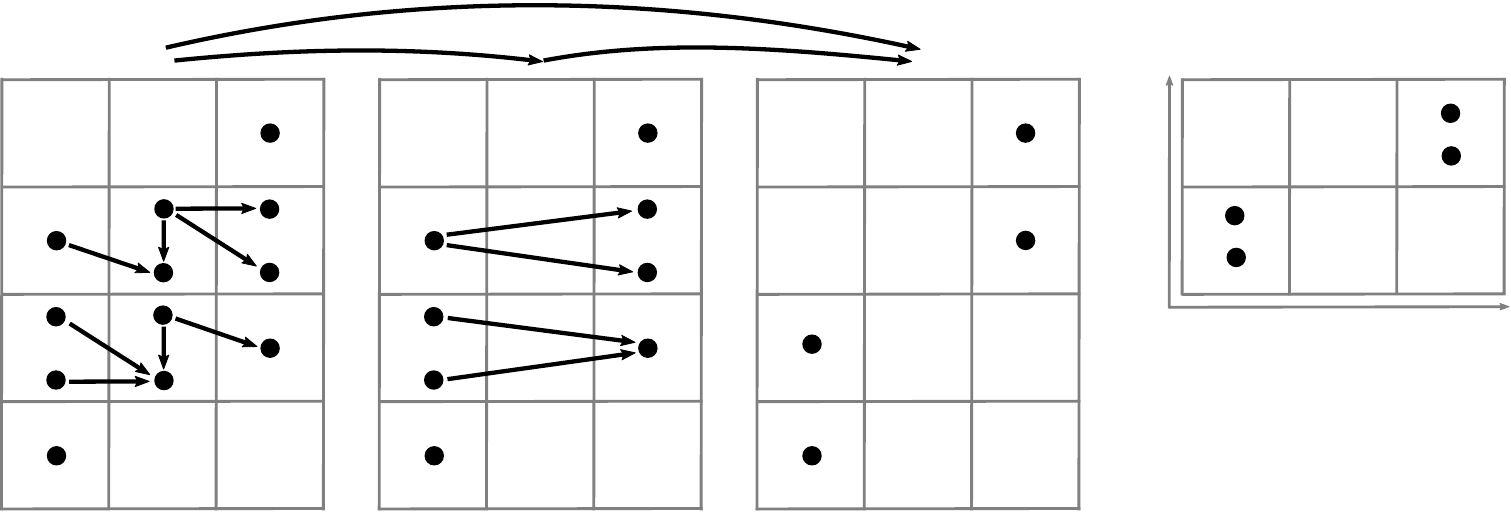}\vspace*{5pt}
\caption{Computing the invariant for the involutive Hopf link $L$ from Figure \ref{fig:hopf}, where each $\bullet$ denotes a copy of the field $\bF$. The chain complex $\CKht(L)$ (far left) is written with respect to a non-standard choice of basis for clarity; the $i$-grading runs horizontally and the $j$-grading runs vertically.
 Note that the differential $d_\tau^\ast$ on $E^2_\F(L) = \Kh(L)$ vanishes.  In fact, ignoring the new gradings, the invariant $\Ht(L)$ agrees with $\Kh(L)$. On the other hand, taking cohomology with respect to $d_\tau$ first yields $E^2_\G(L)$ with an induced differential $d^\ast\ne 0$ (the induced differential has been indicated). Finally, as will be our usual convention, at the top right of the figure we have given the invariant $\Kht^{i,j,k}(L)$ in the plane with the $j$-grading collapsed.}\label{fig:hopf-complex}
\end{figure}

The proof of invariance, which occupies Section \ref{sec:inv}, deserves a few words.  The reader can consult Theorem \ref{thm:R} and Figure \ref{fig:moves} for a list of local moves giving rise to the involutive Reidemeister theorem.  With this in hand, the initial strategy is straightforward: one wishes to construct bifiltered chain homotopy equivalences associated with each of these moves.  Surprisingly, it turns out that this is not possible; instead we find $\F$-filtered cochain maps and modify these by a suitable homotopy to give the $\G$-filtered cochain maps.  Moreover, we are forced to contend with a local move consisting of six crossings (see M3 in Figure \ref{fig:moves}); the desired $\F$-filtered map in this case is a perturbation of a composite of 10 maps induced by standard (non-involutive) Reidemeister moves.

\subsection*{Strongly invertible knots} An oriented knot $K$ in the three-sphere $S^3$ is called \emph{invertible} if it is equivalent, through ambient isotopies, to its orientation reverse. Examples are provided by low-crossing number knots. However, as first established by Trotter, non-invertible knots exist \cite{Trotter1963}; the smallest (in terms of crossing number) non-invertible knot is $8_{17}$ in the Rolfsen knot table \cite{Rolfsen1976}. In fact, there are only 3 non-invertible examples among knots with 9 or fewer crossings ($8_{17}$, $9_{32}$, $9_{33}$), and each of the inversions present in the remaining knots is realized by an involution of $S^3$. The existence of this apparently stronger form of inversion promotes $K$ to a \emph{strongly invertible} knot.  Strongly invertible knots are examples of involutive links.  More generally, an involutive link for which the involution reverses any choice of orientation on each link component is called a \emph{strongly invertible link}.  This condition is equivalent to requiring that the fixed point set of the involution meets every component in exactly two points.

While strong invertibility is known to be a strictly stronger condition than invertibility by work of Hartley \cite{Hartley1980}, it turns out that $K$ admitting an inversion and $K$
admitting a strong inversion are equivalent whenever $K$ is a non-satellite knot. In the hyperbolic case this follows from work of Riley \cite{Riley1979} and Thurston \cite{Thurston1982} (see \cite[Lemma 3.3]{Sakuma1986}), while the case of torus knots is due to Schreier \cite{Schreier1924}. In particular, these involutions may all be viewed as order 2 elements in a subgroup of some dihedral group, which coincides with the symmetry group of $K$ (or the mapping class group of the knot complement). For a more comprehensive study of strongly invertible knots, including a tabulation recording their symmetries, see Sakuma \cite{Sakuma1986}. 

From this point of view, invariants of strongly invertible knots can be viewed as invariants of conjugacy classes of elements in the symmetry group of a knot. Sakuma constructs such an invariant using a construction due to Kojima and Yamasaki \cite{KY1979} applied to a 2-component link obtained through a quotient of the symmetry in question. Further invariants of strong inversions have been obtained from Khovanov cohomology by Couture \cite{Couture2009}, by Snape \cite{Snape}, and by the second author \cite{Watson2017}. These tend to be stronger than Sakuma's invariant, if harder to compute.   But they are similar to Sakuma's work in that, in each instance, these Khovanov-type invariants also arise by first passing to some auxiliary object via a quotient. By contrast, our construction embeds the involution, when present, directly in the Khovanov complex of the knot in question. In so doing, when restricted to strongly invertible knots, Couture's invariant is recovered as a specific page in the spectral sequence corresponding to the $\G$-filtration; see Section \ref{sec:E2G}, particularly Remark \ref{rem:couture}.

\subsection*{Separating mutant pairs} By work of Bloom \cite{Bloom2010} and Wehrli \cite{Wehrli2010},  Khovanov cohomology with coefficients in $\bF$ does not detect mutation. In certain cases, the presence of a strong inversion allows us to separate mutant pairs using the triply graded refinement. 

\parpic[r]{
 \begin{minipage}{60mm}
 \centering
 \includegraphics[scale=0.6]{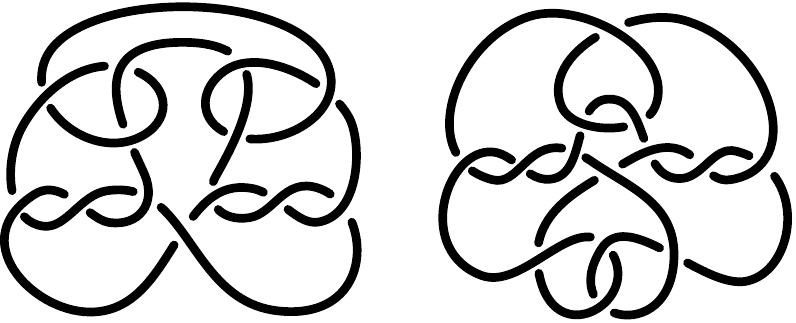}
 \captionof{figure}{A mutant pair that is separated by the refined invariant.}
 \label{fig:mutants-small}
  \end{minipage}%
  }
As a particular example, consider the strongly invertible knots shown in Figure \ref{fig:mutants-small}. The left-hand diagram is alternating, so it follows that this pair is indistinguishable by knot Floer homology \cite{OSz2004-knot,ras}---for alternating knots the Alexander polynomial and the signature, both preserved under mutation, determine the knot Floer homology. This example can be leveraged to produce infinite families, and seems worth recording:  

\begin{theorem}\label{thm:mutation}
There exist pairs of knots related by mutation, with identical Khovanov cohomology and identical knot Floer homology, which can be distinguished from one another by the triply graded refinement of Khovanov cohomology by appealing to an appropriate symmetry. 
\end{theorem}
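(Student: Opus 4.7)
The plan is to verify that the explicit mutant pair depicted in Figure \ref{fig:mutants-small} already witnesses the existence statement, and then to bootstrap to an infinite family. The strategy has four main steps: (i) confirm the pair is a mutant pair; (ii) show that the two knots share Khovanov cohomology over $\bF$; (iii) show they share knot Floer homology; (iv) compute $\Kht$ with respect to the displayed strong inversions and show the triply graded refinements differ.

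Steps (i)--(iii) are quick. Mutation is visible from the diagrams. For (ii), the Bloom--Wehrli theorem on Conway mutation immediately gives $\Kh(L_1)\cong\Kh(L_2)$ with $\bF$ coefficients. For (iii), one uses that the left-hand diagram is alternating (so both knots are alternating, since mutation preserves the alternating property of a diagram) and invokes the theorem of Oz\-s\-v\'ath--Szab\'o and Rasmussen that the knot Floer homology of an alternating knot is determined by the Alexander polynomial together with the signature. Both of these classical invariants are mutation invariants, so $\HF\text{\em K}(L_1)\cong\HF\text{\em K}(L_2)$.

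Step (iv) is the main obstacle and really where the theorem is proved. Starting from the given equivariant diagrams, one writes down the perturbed complex $\CKht(D_i)$ together with its two filtrations $\F^\bullet$ and $\G^\bullet$ as described in Section \ref{sec:intro}, then computes the associated graded object of the resulting bifiltration on $\Ht(L_i)$. This is a finite but substantial computation; by Theorem \ref{thm:triply-graded-invariant} the answer is independent of choices of equivariant diagrams, so it suffices to perform the calculation in any equivariant presentation. Comparing the two resulting trigraded vector spaces and exhibiting a single trigraded bidegree $(i,j,k)$ in which they disagree completes the verification for this pair. The hardest conceptual point is not the computation itself but ensuring that one can separate the genuinely equivariant information from information already detected by $\Kh$; concretely, the differential $d^\ast$ on $E^2_{\G}$ must be nontrivial for at least one of the diagrams in a way that is not mirrored by the other.

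Finally, to promote a single example to infinite families, I would perform an equivariant operation on the pair that preserves both Khovanov and knot Floer homology while leaving the distinguishing trigraded bidegree intact. A convenient choice is equivariant connected sum along the symmetry axis with an arbitrary strongly invertible knot $K$, iterated: the connected sum is compatible with mutation (so the resulting pairs remain mutants and hence share $\Kh$ and $\HF\text{\em K}$ by the same reasoning as in (ii)--(iii)), while a K\"unneth-type behaviour of the involutive construction propagates the surviving difference in $\Kht$ to every member of the family. Any such equivariant operation which is compatible with the $\F$- and $\G$-filtrations suffices; one can vary $K$ to obtain infinitely many distinct pairs.
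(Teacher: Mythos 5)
There is a genuine gap in step (iv), and it concerns what your comparison would actually prove. The invariant $\Kht$ is an invariant of an \emph{involutive} knot $(K,h)$, not of the underlying knot. Computing $\Kht$ for the two displayed diagrams and finding a trigraded degree in which they disagree only shows that $(K,h)$ and $(K^\mu,h^\mu)$ are inequivalent as strongly invertible knots; it does not yet show $K\ne K^\mu$. An isotopy $K^\mu\to K$ that disrespects the involutions would conjugate $h^\mu$ to some \emph{other} strong inversion $h''$ on $K$, and then $\Kht(K^\mu,h^\mu)\cong\Kht(K,h'')$. So you must control the $\Kht$-invariant for \emph{all} strong inversions on at least one of the knots. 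The paper does exactly this: since $K$ is a non-torus alternating knot it is hyperbolic, hence by Sakuma's classification it carries at most two strong inversions; both are computed and shown to have support in $k\ge-2$, while the given strong inversion on $K^\mu$ has $\Khredt$ non-trivial in $k=-3$. Without this control your argument does not rule out a matching inversion on $K$, and the conclusion that the knots differ is not established.

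Two further points worth noting. First, you propose computing the full bifiltered cohomology, which the paper explicitly avoids: the proof rests entirely on the support lemma (Lemma \ref{support_lemma}), which pins down a non-trivial class in the extreme $k$-grading directly from the crossing data of the diagram once one knows that ordinary Khovanov cohomology is non-trivial in the extreme $i$-degree. This is what makes the argument short and, importantly, what makes the estimate $k_{\min}=-\frac{1}{2}n_-^E-n_-^A$ actually sharp and visible without a large calculation. Your framing that ``the differential $d^\ast$ on $E^2_\G$ must be nontrivial'' is not the mechanism the paper uses and is not needed here.

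Second, the proposed passage to infinite families via equivariant connected sum has problems. No K\"unneth formula for $\Kht$ is established in the paper, so the claim that the distinguishing trigraded class survives is unsupported. More seriously, connected sums are satellite knots, so the Riley--Thurston/Sakuma bound on the number of strong inversions (which you need, per the gap above) no longer applies; you would lose control of the symmetry group and with it the ability to conclude the \emph{knots} differ via $\Kht$. The paper instead proposes inserting twists that keep the diagrams alternating (hence hyperbolic) and keep the crossing counts feeding the support lemma intact, so that the identical argument runs verbatim for each member of the family.
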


A detailed account of the construction is given in Section \ref{sec:examples}, but the key point in the proof is this: The left-hand diagram has non-trivial invariant in degree $k=-3$ while the right-hand diagram has invariant supported only in degrees $k>-3$. As a result, just as knot Floer homology can detect mutation in cases where the genus of the relevant knots changes \cite{OSz2004-genus,OSz2004-mutation}, Theorem \ref{thm:mutation} establishes that a similar fact is true for Khovanov cohomology, suitably refined, in the presence of an appropriate involution. There is another, closely related, viewpoint on this result: since mutant knots have homeomorphic two-fold branched covers, Theorem \ref{thm:mutation} allows us to distinguish involutions on this common covering space using Khovanov cohomology. In general, certifying that involutions on a given manifold are distinct is a difficult task. 

\parpic[r]{
 \begin{minipage}{65mm}
 \centering
 \includegraphics[scale=0.50]{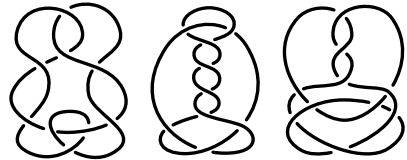}
 \captionof{figure}{The knots $8_8$ (left and centre) and $10_{129}$ (right) share the same Khovanov cohomology.}
 \label{fig:8_8-vs-10_129-small}
  \end{minipage}%
   }
The triply-graded invariant separates other pairs of knots with identical Khovanov cohomology. The knots $8_8$ and $10_{129}$ are shown in Figure \ref{fig:8_8-vs-10_129-small}. These knots have identical Khovanov cohomology but are known not to be related by mutation; see \cite{Watson2007}. Note that, in this example, we appeal to the fact that $8_8$ is a (non-torus) 2-bridge knot, hence it admits a pair of strong inversions and two involutive symmetric diagrams. One of these gives rise to an invariant which is non-trivial in grading $k=4$, while the other gives rise to an invariant supported only in gradings $k\ge-2$. On the other hand, the invariant for $10_{129}$ with the symmetry shown is trivial in gradings $k> 3$ but non-trivial in grading $k=-3$. The interesting point here is that, as with the mutant examples, it is often possible to gain new information from the $k$-grading without having to compute the entire refined invariant. 

\subsection*{Properties and further remarks} Certain general properties are already apparent in simple examples. 

\begin{proposition}\label{prp:vanishing}
When $L$ is an involutive link and  $\dim\Kh^{i,j}(L)\le 1$ then the induced differential $d_\tau^*$ vanishes in grading $(i,j)$.  
\end{proposition}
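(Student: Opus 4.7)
The plan is to identify the induced differential $d_\tau^*$ on $E^2_\F(L)=\Kh(L)$ with the map $\id+\tau^*$, where $\tau^*$ is the bigraded involution on Khovanov cohomology induced by the chain-level symmetry $\tau$, and then to deduce the claimed vanishing from a direct argument in characteristic two.

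First, one checks that the diagram symmetry gives a chain map $\tau\co \CKh(D)\to \CKh(D)$ that preserves both the cohomological grading $i$ and the quantum grading $j$: since the involution of $S^3$ is orientation-preserving, it permutes the crossings of the diagram while preserving their signs, and the associated permutation of vertices of the cube of resolutions commutes with the Khovanov differential $d$. Consequently $\tau$ descends to a bigraded $\bF$-linear involution $\tau^*$ of $\Kh(L)$.

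Next, one unpacks the $\F$-spectral sequence just enough to identify the differential on $E^2_\F=\Kh(L)$. Because the $\F$-filtration is induced by the cohomological grading, and $d_\tau=\id+\tau$ preserves this grading and commutes with $d$, the differential induced on $E^2_\F$ is precisely the map on $d$-cohomology coming from $d_\tau$, namely $d_\tau^*=\id+\tau^*$; in particular it preserves the bigrading $(i,j)$.

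Finally, fix $(i,j)$ with $\dim_\bF \Kh^{i,j}(L)\le 1$. The case $\dim=0$ is vacuous. If $\Kh^{i,j}(L)\cong\bF$, then $\tau^*$ restricts to an involution of a one-dimensional vector space over $\bF=\bF_2$; any such involution is the identity. Hence $d_\tau^*=\id+\tau^*=\id+\id=0$ in bigrading $(i,j)$, as required.

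The only mildly delicate point is confirming that the spectral sequence differential on $E^2_\F$ really is $\id+\tau^*$ (rather than some more intricate cocycle-level construction); once this identification is in hand, the vanishing is immediate from characteristic two, and the one-dimensional hypothesis is used only to ensure that $\tau^*$ is forced to act trivially.
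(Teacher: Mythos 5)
Your proof is correct and fills in exactly the argument the paper leaves implicit when it asserts that the claim ``follows immediately from the definition of the $\F$-filtration'': since $d_\tau = \id + \tau$ has filtration degree $0$ and commutes with $d$, the $E^2_\F$ differential on $\Kh(L)$ is indeed the induced map $d_\tau^* = \id + \tau^*$, and over $\bF_2$ an involution on a space of dimension at most one is the identity, forcing $d_\tau^* = 0$ there. The ``mildly delicate point'' you flag is handled correctly; one can verify directly from the definition of the $E^2$ differential (take a $d$-cocycle $x$ of filtration level $i$ and observe $[\partial x] = [d_\tau x]$ in $\Kh^{i,j}$), or equivalently note that $(d_\tau^*)^2 = 0$ already forces the vanishing on a one-dimensional graded piece.
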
 

This observation follows immediately from the definition of the $\F$-filtration; see Subsection \ref{subsec:filtration_definition}. By contrast, it is relatively straightforward to construct examples for which $d_\tau^*\ne0$. Recall that, for any knot $K$ one obtains an obvious strong inversion---switching the summands---on $K\# K$, and finds that $\dim\Kh(K) =\dim E^3_\G(K\# K)\ge \dim\Kht(K\# K)$ in this setting. But when $K$ is non-trivial $\dim\Kh(K\# K)>\dim\Kh(K)$, and hence $(\Kh(K\# K), d_\tau^*)$ is an interesting chain complex. Perhaps more strikingly, there are examples of 7-crossing knots, each admitting a pair of strong inversions, for which one strong inversion has $d_\tau^*=0$ and the other has $d_\tau^*\ne0$. These are the knots $7_4$ and $7_7$; see Section \ref{sec:examples}, in particular, Figure \ref{fig:table}. It seems surprising that the vanishing of $d_\tau^*$ alone is sufficient to separate strong inversions on a given knot, particularly on small knots where the behaviour of the undeformed Khovanov cohomology is known to be rather tame.

\begin{proposition}\label{prp:mirror}
When $L$ is an involutive link $\Kht^{i,j,k}(L^*) \cong \Kht^{-i,-j,-k}(L)$ where $L^*$ is the involutive mirror of $L$. 
\end{proposition}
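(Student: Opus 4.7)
First I would realise the involutive mirror at the chain level. Starting from an involutive diagram $D$ of $L$, form the mirror diagram $D^*$ by reversing every crossing; the vertical axis of symmetry is unaffected, so $D^*$ represents $L^*$ as an involutive link with the same involution. Over $\bF$, swapping $0$- and $1$-resolutions at each crossing yields the standard chain-level duality $\CKh(D^*)\cong\Hom_{\bF}(\CKh(D),\bF)$ which reverses both the homological and quantum gradings, and which transports saddle cobordisms to saddle cobordisms because the Frobenius algebra is self-dual over $\bF$. This recovers the classical mirror isomorphism $\Kh^{i,j}(L^*)\cong\Kh^{-i,-j}(L)$.

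The second step is to extend this duality across the perturbation. Because the involution on $\CKht(D)$ is induced by the diagram symmetry, and this symmetry is identical for $D$ and $D^*$, the $\tau$-action on $\CKh(D^*)$ is precisely the transpose of the $\tau$-action on $\CKh(D)$; in particular $d_\tau=\id+\tau$ dualises to $d_\tau$. The chain-level duality therefore upgrades to an isomorphism $\CKht(D^*)\cong\CKht(D)^\vee$ intertwining the perturbed differentials.

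The final step is to transport the two filtrations through this duality. The $\F$-filtration is inherited from the relaxed homological grading, so $\bF$-duality exchanges the subcomplex $\F^i\CKht(D)$ with a quotient of $\CKht(D^*)$; reindexing gives an isomorphism of bigraded spectral sequences $E^r_\F(L^*)\cong E^r_\F(L)$ under $(i,j)\mapsto(-i,-j)$ for every $r\ge 2$. The $\G$-filtration is built locally from the resolution cube together with the $\tau$-edges, each of which is sent to its mirror counterpart under crossing reversal, so the subcomplex filtration $\{\G^k\CKht(D)\}$ dualises to a filtration on $\CKht(D^*)$ whose reindexing is $k\mapsto -k$. Taking associated graded pieces commutes with $\bF$-duality, and the claimed $\Kht^{i,j,k}(L^*)\cong\Kht^{-i,-j,-k}(L)$ follows. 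The main obstacle I anticipate is confirming that the $\G$-direction reversal is exactly $k\mapsto -k$ rather than a shifted variant; this reduces to a local verification that the $\tau$-edges contribute symmetrically to the $\G$-weight at $0$- and $1$-resolutions, so that crossing reversal negates the weight, which is the same kind of local check used to set up the filtration.
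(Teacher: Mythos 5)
Your proposal is correct and follows essentially the same route as the paper: realize the involutive mirror at the chain level via crossing reversal, observe that the perturbed differential $\partial = d + d_\tau$ dualizes (the paper phrases this concretely as ``the transpose of the matrix of $\partial$, with respect to the standard decorated-smoothing basis and its flipped-decoration dual basis, is the matrix of $\partial$ on $\CKht^{-j}(D^*)$'', while you phrase it abstractly via $\Hom_\bF(-,\bF)$), and then negate the gradings/filtrations. The local check you flag for the $k$-weight does work out exactly as you anticipate: in each of the three cases of Definition \ref{defn:kasagrading} the local contribution of a given smoothing of a crossing negates when the crossing sign is reversed, so $w_{D^*}(s)=-w_D(s)$ for every smoothing $s$ and the $\G$-reindexing is precisely $k\mapsto -k$ with no shift.
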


\parpic[r]{
 \begin{minipage}{35mm}
 \centering
 \includegraphics[scale=0.7]{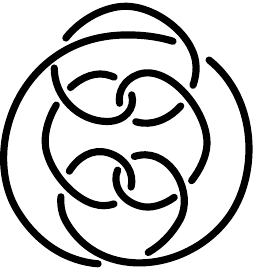}
 \captionof{figure}{The knot $10_{71}$ is not amphicheiral.}
 \label{fig:10_71-small}
  \end{minipage}%
   }
A crucial point here is that this mirror is taken in the involutive setting; the involution $\tau\co S^3\to S^3$ induces an involution $\tau_*$ on reversing the orientation on $S^3$.  Diagramatically, one obtains the involutive mirror $D^*$ of a involutive diagram $D$ by changing all the crossings of $D$ from over- to under-crossings.

If a knot $K$ is amphicheiral and admits a strong inversion, the mirror of that strong inversion may be a \emph{distinct} strong inversion on $K$.
Consider $4_1$ and $6_3$ as particular examples (see Figure \ref{fig:table}), and observe that the triply graded invariant separates the two strong inversions. On the other hand, Sakuma has observed that a property like this can be exploited to certify that a knot is non-amphicheiral; see \cite{Sakuma1986,Watson2017}. We return to this point, and give the proof of Proposition \ref{prp:mirror}, in Subsection \ref{sub:amph}. For example, the knot $10_{71}$ admits a unique strong inversion (shown in Figure \ref{fig:10_71-small}) however $\Kht^{i,j,k}(K)\ncong \Kht^{-i,-j,-k}(K)$ when $K$ is this knot. It follows that $K$ is not isotopic to $K^*$ by the uniqueness of the involution. We remark it is the case, however, that $\Kh^{i,j}(K)\cong\Kh^{-i,-j}(K)$ for all gradings $i,j\in\Z$ so that the undeformed Khovanov cohomology does not obstruct $K$ from being amphicheiral.

In the case that a component of the involutive link meets the fixed point set, one may define a \emph{reduced} version of the theory, written with a tilde.  (For the \emph{cognoscenti}, the intersection with the fixed point set provides a suitable place for a basepoint.)  Since we work over $\bF$ there is a splitting of the unreduced invariant.  The reduced invariant is set up more precisely and the following proposition is proved in Subsection \ref{subsec:reduced_invariant}.

\begin{proposition}\label{prop:unreduced_splits}
When $L$ is an involutive link with fixed point set meeting at least one component of $L$, there is a splitting $\Kht^{i,j,k}(L) \cong \Khredt^{i,j+1,k}(L)\oplus\Khredt^{i,j-1,k}(L)$. In particular, such a splitting always exists for strongly invertible links. 
\end{proposition}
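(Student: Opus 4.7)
Choose a component $K$ of $L$ meeting the fixed axis; by transversality $K\cap\mathrm{Fix}(\tau)=\{p,p'\}$, and $\tau(p)=p$ since $p$ lies on the fixed set. Declare $p$ the basepoint and define
$X_p\colon \CKht(L)\to \CKht(L)$
to be the endomorphism of tridegree $(0,-2,0)$ that, at each vertex of the cube of resolutions, acts by multiplication by $x$ on the factor $V=\bF[x]/(x^2)$ corresponding to the circle through $p$ (and by the identity on the remaining factors).

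The first step is to verify that $X_p$ is a bifiltered chain map for the perturbed differential $\partial=d+d_\tau$. Commutativity with $d$ is the standard basepoint-action argument underlying reduced Khovanov cohomology. For $d_\tau=\id+\tau$ it suffices to show $\tau X_p=X_p\tau$, and this holds because $\tau(p)=p$ implies that at every resolution $\tau$ preserves the circle through $p$ setwise; in local coordinates $\tau$ factors as $\tau'\otimes\id_V$ on the factor where $X_p$ acts. That $X_p$ preserves the $\F$-filtration is immediate from its vanishing cohomological degree; preservation of the $\G$-filtration (see Subsection~\ref{subsec:filtration_definition}) follows from the $\tau$-equivariance just noted.

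Set $\CKhtred(L):=\mathrm{im}(X_p)=\ker(X_p)$, the equality holding over $\bF$ because locally $X_p$ has rank one and $X_p^{\,2}=0$. This is a bifiltered subcomplex, and $X_p$ itself provides an isomorphism $\CKht(L)/\ker(X_p)\xrightarrow{\sim}\mathrm{im}(X_p)$ of tridegree $(0,-2,0)$. Following the standard Shumakovitch-type argument over $\bF$, there is a chain-level decomposition
$\CKht(L)\;\simeq\;\CKhtred(L)\{1\}\oplus\CKhtred(L)\{-1\}$
where $\{\pm 1\}$ denote quantum grading shifts and the two summands are identified via the splitting idempotent associated to the $\bF[X_p]/(X_p^{\,2})$-action. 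Because $X_p$ is $\tau$-equivariant and localised on the circle through $p$, the splitting is automatically bifiltered, and passing to the associated graded pieces of the $(\F,\G)$-bifiltration on cohomology yields the claimed triply graded splitting. For strongly invertible links the hypothesis holds by definition, establishing the final sentence.

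The main obstacle is ensuring that the Shumakovitch-type chain homotopy, which in the classical setting lifts the cohomology-level direct sum decomposition to a chain-level one, can be chosen $\tau$-equivariantly (equivalently, to preserve both the $\F$- and $\G$-filtrations). This reduces to a local check near the basepoint circle: since $\tau$ factors there as $\tau'\otimes\id_V$, any chain homotopy supported on the $V$-factor through $p$ automatically commutes with $\tau$, and hence with $d_\tau$. Once this equivariant splitting is in hand, taking associated graded spaces with respect to the bifiltration on $\Ht(L)$ gives the proposition.
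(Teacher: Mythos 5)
Your route via the basepoint action $X_p$ and a Shumakovitch--type splitting is close in spirit to the paper's proof, which presents $\CKht^j(D)$ as $\operatorname{cone}(\Phi)$ for $\Phi\colon\CKhtred^j(D)\{1\}\to\CKhtred^j(D)\{-1\}$ and then exhibits a concrete bifiltered nullhomotopy $h$ of $\Phi$. Your first two steps are sound: $X_p$ commutes with $d$, and since $\tau$ fixes the circle through $p$ setwise at every resolution, $X_p$ commutes with $\tau$, hence with $\partial$, and it is bifiltered. The short exact sequence $0\to\CKhtred\{-1\}\to\CKht\to\CKhtred\{1\}\to0$ follows.

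The flaw is in your final paragraph. You argue that the Shumakovitch--type chain homotopy is ``supported on the $V$-factor through $p$'' and therefore automatically $\tau$-equivariant because $\tau = \tau'\otimes\id_V$ there. But no nullhomotopy of the connecting map can be supported purely on the basepoint tensor factor: the connecting map $\Phi$ involves merges and splits at the basepoint and any homotopy trivialising it must touch the other circles. Indeed, both Shumakovitch's $\nu$ (which sums over \emph{all} circles the operator changing a $v_-$ to a $v_+$) and the homotopy $h$ used in the paper (which, given a decoration with the basepoint circle marked $v_+$, sums over the other circles marked $v_-$ the result of swapping that circle to $v_+$ and the basepoint to $v_-$) act on circles away from $p$. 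So your claimed reduction to a ``local check near the basepoint circle'' does not go through as stated.

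The conclusion is nonetheless correct, but for a different reason than the one you give: these homotopies are $\tau$-equivariant because they are defined \emph{symmetrically} over the set of all circles of a smoothing, and $\tau$ merely permutes the circles (taking $c$ to $\tau(c)$), so conjugation by $\tau$ permutes the summands of the homotopy and leaves the sum fixed. Bifilteredness is then automatic because neither $h$ nor $\nu$ changes the underlying smoothing, so both the $i$- and $k$-degrees are preserved. Also note the parenthetical ``(equivalently, to preserve both the $\F$- and $\G$-filtrations)'' conflates two independent requirements: $\tau$-equivariance ensures compatibility with $d_\tau$, while preservation of the bifiltration is a separate (here trivially satisfied) condition. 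Replace the supported-at-$p$ justification with the symmetry-over-circles argument and your proof is complete; it then amounts to a mild repackaging of the paper's construction rather than a genuinely different one.
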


For strongly invertible knots and links we shall work exclusively with the reduced invariant $ \Khredt^{i,j,k}(K)$. Just as with Khovanov cohomology, this invariant comes with skein exact sequences. We use this to calculate the invariant for all strong inversions on hyperbolic knots through 7 crossings; see Figure \ref{fig:table}.  Note that for strongly invertible links the invariant is trivial in non-integral $k$-gradings.


 \subsection*{Choice of involutive diagrams}We make a final remark regarding the choice of involutive diagrams used in this work. In order to incorporate an involution into the Khovanov complex such a choice of diagram was required but other choices are possible.

On the right of Figure \ref{fig:intro-trefoils} another possibility is illustrated in which the axis extends vertically from the plane of the diagram, narrowly missing puncturing the eye of the careless reader.  Although less generally applicable, this possibility is available when restricting to strongly invertible knots, or to links of which no more than one component intersects the fixed point set of $\tau$.  Furthermore, the involutive Reidemeister calculus is less involved.  Certain grading adjustments to the construction given below are required (in particular when a component intersects the axis then one seems to need to shift according to a certain winding number) although we give no details here.  Having made the appropriate adjustments in the construction, a direct calculation shows that the invariants obtained by this construction are different to those that we consider in this paper. This means that, for strongly invertible knots there are (at least) two different triply graded theories available. 

It is also possible to use techniques similar to those used in this work to endow annular Khovanov invariants with refined grading structures. This requires a slightly different setup, but seems of sufficient interest that we shall instead pursue it in a separate paper.

\labellist
\tiny
\pinlabel {$k$} at 69 72 \pinlabel {$k$} at 257 72
\pinlabel {$i$} at 147 9 \pinlabel {$i$} at 334.5 9
                \endlabellist
\begin{figure}[t]
\includegraphics[scale=0.75]{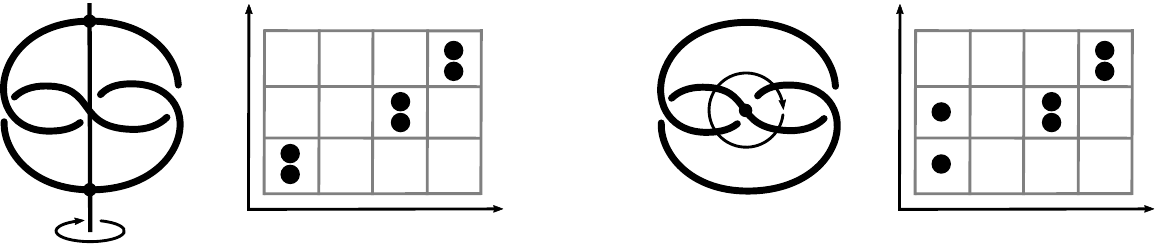}
\caption{Projecting the unique strong inversion on the trefoil in two different ways. For illustration we have only shown the relative $(i,k)$-grading, which is already enough to exhibit the difference between the constructions. The example on the left is given more attention in Section \ref{sec:examples}; the calculation on the right is left for the reader.}\label{fig:intro-trefoils}
\end{figure}

\subsection*{Acknowledgements}
This work was started at the Isaac Newton Institute while the authors were participants in the program {\em Homology Theories in Low Dimensions}, and completed during visits by the first author to the Centre Interuniversitaire de Recherches en G\'eom\'etrie et Topologie (CIRGET) in Montr\'eal and the Pacific Institute of Mathematics (PIMS) in Vancouver. The authors thank all three institutes for their support. We also thank Luisa Paoluzzi for some motivating discussions concerning mutants.


\section{Involutions on links}

Our focus will be on links that are equivariant with respect to a fixed involution on the 3-sphere.  We give first a diagrammatic model for links of this type and we then relate this model to Sakuma's notion of strongly invertible knots.

\subsection{Diagrammatics of involutive links.}
\label{subsec:diagrammatics}
Recall that, due to the positive resolution of the Smith conjecture, any non-trivial orientation-preserving involution of the $3$-sphere has a connected, one dimensional fixed point set that is unknotted. For our purposes, it will be convenient to fix a standard model for such an involution: We start with a great circle inside the $2$-sphere ${\bf a} \subset S^2$, and let $R \co S^2 \rightarrow S^2$ be reflection in ${\bf a}$.  Then we get an orientation-preserving involution of $S^2 \times (-\infty,+\infty)$ by $(p,q) \mapsto (R(p), -q)$.  Compactifying by adding two points $+\infty$ and $-\infty$ we obtain our model for the involution $\tau \co S^3 \rightarrow S^3$.  


  \begin{definition}\label{def:involutive}
  A link $L$ is called \emph{involutive} if it is fixed, as a subset of $S^3$, by the involution $\tau$ described above and ${\bf a} \not\subset L$.  Two involutive links are equivalent if they are isotopic through involutive links.
  \end{definition}


This class contains some familiar examples as special cases.

\begin{definition}
	\label{def:strongly_invertible}
	A link $L$ is called \emph{strongly invertible} if it is involutive and, additionally, $\tau$ reverses (any choice of) orientation on $L$.
\end{definition}
\noindent Equivalently, every component of a strongly invertible link meets ${\bf a}$ in exactly two points. In the case of a single component, we recover the class of strongly invertible knots. For a comprehensive study of strongly invertible knots, including a tabulation recording their symmetries, see Sakuma~\cite{Sakuma1986}.

\parpic[r]{
 \begin{minipage}{60mm}
 \centering
 \includegraphics[scale=0.75]{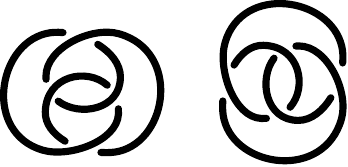}
 \captionof{figure}{Two distinct strong inversions on the figure eight knot.}
 \label{fig:2-fig-8s}
  \end{minipage}%
  }
Our definition of involutive links is well-suited to diagrammatic study.  We obtain diagrams of generic (so missing $\{ + \infty, - \infty \}$) involutive links by puncturing $S^3$ at $+\infty$ and $-\infty$ and projecting to $S^2$ (recording over-crossing and under-crossing information).  Our diagrams throughout this paper are on $S^2$ punctured at a point on ${\bf a}$.  The axis ${\bf a}$ is usually omitted in these drawings, but lies vertically---hence all our diagrams will have a vertical symmetry (as seen, for example, in Figure \ref{fig:2-fig-8s} as well as Figures  \ref{fig:mutants-small} through \ref{fig:10_71-small} of the previous section).


There is a subtle property worth noting here: even when a given knot is amphicheiral the knot in question and its mirror need not be equivalent as strongly invertible knots. The figure eight knot ($4_1$ in Rolfsen's table) provides an example: this knot admits a pair of distinct strong inversions, shown in Figure \ref{fig:2-fig-8s}, that are exchanged on taking the mirror image.  In particular there are two equivalence classes of strongly invertible knot which are isotopic to the figure eight knot.

In order to define invariants of involutive links using involutive diagrams, we will appeal to an enhancement of the Reidemeister theorem. 
\begin{theorem}\label{thm:R}
Let $D_1$ and $D_2$ be involutive diagrams for involutive links $L_1$ and $L_2$, respectively. Then $L_1$ and $L_2$ are equivalent if and only if $D_1$ and $D_2$ are related by a sequence of moves from the list given in Figure \ref{fig:moves} (together with equivariant planar isotopy). 
\end{theorem}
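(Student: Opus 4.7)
The plan is to reduce the statement to the classical Reidemeister theorem together with a case analysis at the symmetry axis $\mathbf{a}$. The ``if'' direction is immediate, since each move in Figure \ref{fig:moves} is by design realised by an ambient isotopy commuting with $\tau$ (and equivariant planar isotopy is an equivalence of involutive links by definition). So I would focus on the ``only if'' direction.

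Given an equivalence of involutive links $L_1$ and $L_2$, choose an isotopy $F_t$ carrying $L_1$ to $L_2$ through $\tau$-invariant links. Pushing $F_t$ to the diagram level, I would put it in general position with respect to both the projection $S^3 \setminus \{\pm\infty\} \to S^2$ and the axis $\mathbf{a}$. By averaging with $\tau$, one may further assume that the resulting one-parameter family of singular events in the projection is itself $\tau$-symmetric. Every generic codimension-one event is then either supported entirely off $\mathbf{a}$ or in a small equivariant neighbourhood of a single point of $\mathbf{a}$. An event of the first kind necessarily occurs simultaneously with its $\tau$-mirror, and the pair realises a classical Reidemeister move together with its reflection; these account for the ``paired'' moves in Figure \ref{fig:moves}.

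The main obstacle is to verify that the axial moves listed in Figure \ref{fig:moves} exhaust all possible local bifurcations centred on $\mathbf{a}$. The approach is a local-model argument: parameterise the germs of $\tau$-equivariant isotopies of tangles meeting $\mathbf{a}$, and classify their generic singularities. Near the axis, strands may meet $\mathbf{a}$ transversely or become tangent to it, crossings may appear on $\mathbf{a}$ or slide through it, and two mirror-symmetric classical Reidemeister moves can collide at the axis, producing a more elaborate move (for instance, the six-crossing move M3 arises from the collision of a pair of type-III Reidemeister moves). A finite enumeration of the possible strand-and-crossing configurations in an equivariant neighbourhood of a point on $\mathbf{a}$, before and after the bifurcation, should match the axial moves listed.

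Concatenating the non-axial moves (performed as $\tau$-mirror pairs) with the axial moves, and interspersing equivariant planar isotopy, then realises the full equivariant isotopy from $D_1$ to $D_2$ by a sequence drawn from Figure \ref{fig:moves}. I expect the enumeration of axial bifurcations to be the most delicate step: one must rule out any further axial configuration not already accounted for, and conversely exhibit each listed move as a genuine equivariant local isotopy, which for M3 in particular requires a careful identification of the collision of Reidemeister-III moves at $\mathbf{a}$.
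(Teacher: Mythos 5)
Your proposal takes essentially the same structural route as the paper: reduce to the classical Reidemeister theorem, separate the analysis into events supported off the axis $\mathbf a$ versus events interacting with $\mathbf a$, observe that off-axis events occur in $\tau$-mirror pairs yielding IR1--IR3, and then attack the hard part, which is a finite enumeration of the axial events. The difference is one of language and technology: the paper follows Reidemeister's original PL argument, replacing the link with a piecewise-linear approximation and tracking additions and deletions of $\tau$-equivariant triangles, whereas you work smoothly, putting the isotopy in general position and classifying the generic codimension-one bifurcations of the projection in a $\tau$-equivariant neighbourhood of $\mathbf a$. Both are standard routes to Reidemeister-type theorems and both hinge on the same enumeration, which the paper also treats only as a sketch. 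One advantage of the PL/triangle framing is that ``equivariant pair of triangles'' gives a very concrete combinatorial object to enumerate, and this is what lets the paper list the configurations producing R1, R2, M1, M2, M3 explicitly (Figure~\ref{fig:interesting-cases}) and also to observe directly that the on-axis triangle configuration for R3 cannot occur.

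Two points are worth flagging. First, your claim that off-axis codimension-one events ``necessarily occur simultaneously with their $\tau$-mirror'' is correct, but it does deserve a sentence of justification: it follows from $\tau$-equivariance of the isotopy together with $\tau$-invariance of the projection, not from genericity alone. Second, your heuristic that M3 is the ``collision of a pair of type-III Reidemeister moves at the axis'' is a reasonable first guess but should be treated with care. The paper's analysis of M3 (in the proof of Proposition~\ref{prop:M3}) replaces the move by an equivalent comparison between a $0$-crossing and a $12$-crossing $4$-braid tangle, and the non-involutive decomposition of the underlying isotopy uses six R2 moves followed by four R3 moves. So the local event at $\mathbf a$ is combinatorially richer than a naive simultaneous pair of R3's, and your enumeration step would need to verify that this six-crossing configuration genuinely appears in a generic equivariant bifurcation and that nothing further does. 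With that caveat, the approach is sound and matches the paper's strategy.
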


\labellist
\pinlabel {IR1} at -40 388 \pinlabel {$\sim$} at 115 388
\pinlabel {IR2} at -40 340 \pinlabel {$\sim$} at 115 340
\pinlabel {IR3} at -40 272 \pinlabel {$\sim$} at 115 272
\pinlabel {R1} at -40 205 \pinlabel {$\sim$} at 115 205
\pinlabel {R2} at -40 160 \pinlabel {$\sim$} at 115 160
\pinlabel {M1} at -40 110 \pinlabel {$\sim$} at 115 110
\pinlabel {M2} at -40 65 \pinlabel {$\sim$} at 115 65
\pinlabel {M3} at -40 18 \pinlabel {$\sim$} at 115 18
	\endlabellist
\begin{figure}[ht]
\includegraphics[scale=0.75]{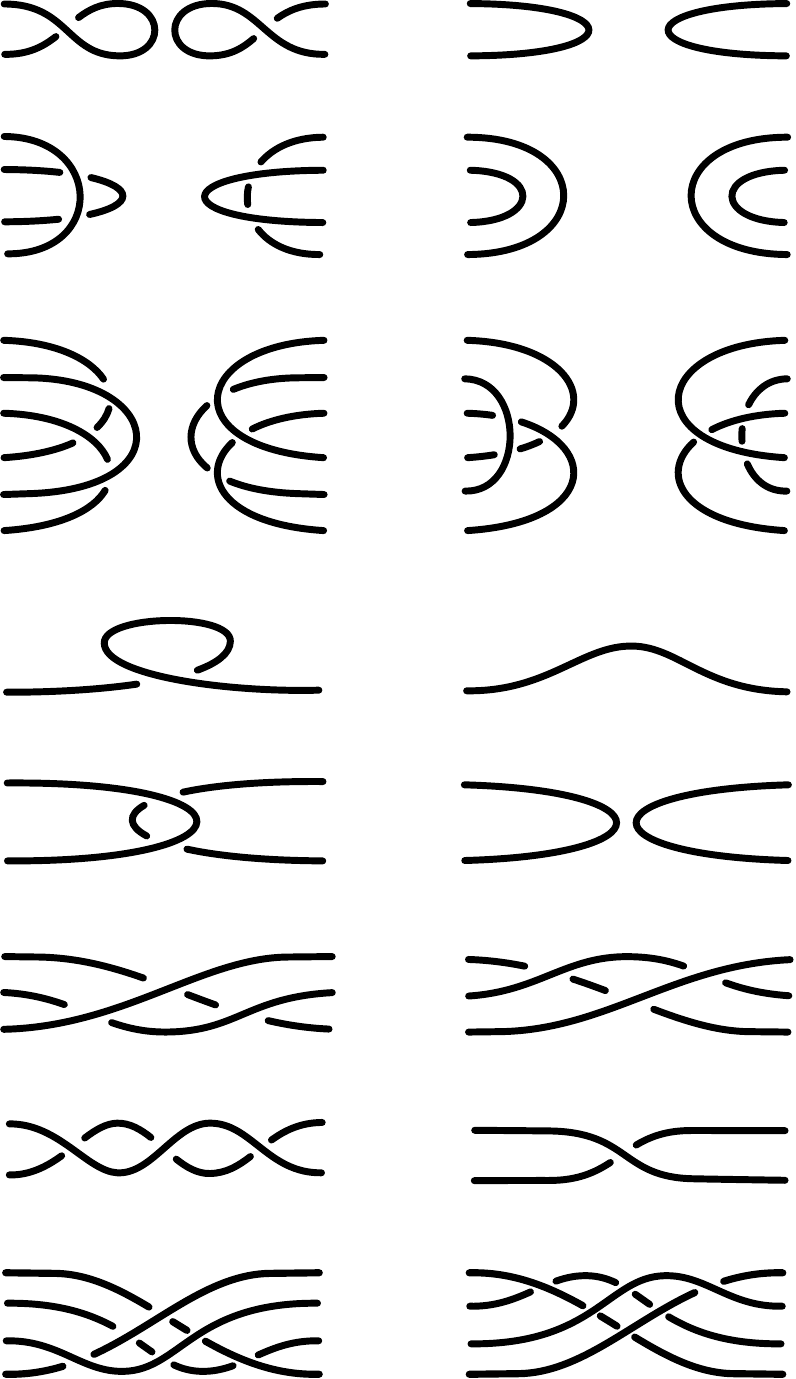}
\caption{The involutive Reidemeister moves.
}\label{fig:moves}
\end{figure}

\parpic[r]{
 \begin{minipage}{60mm}
 \centering
 \includegraphics[scale=0.75]{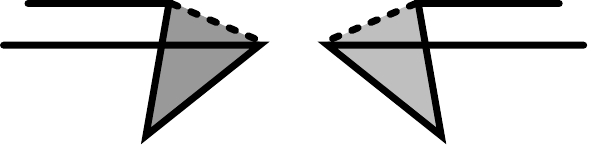}
 \captionof{figure}{The involutive Reidemeister move IR1 obtained by adding or deleting a pair of equivariant triangles.}
 \label{fig:tri-IR1}
  \end{minipage}%
  }
  We should remark here that this does not seem to appear anywhere in print, though something rather similar appears in work of Collari and Lisca \cite{CL} for a different class of symmetric diagrams. The approach is clear enough; we give a sketch.
\begin{proof}[Sketch of the proof of Theorem \ref{thm:R}] We consider a PL approximation of the diagram $D_1$, and follow Reidemeister's original proof for non-involutive diagrams \cite{Reidemeister1932}. In particular, an involutive isotopy between $L_1$ and $L_2$ may be approximated by a series of PL moves amounting to the addition or deletion of equivariant triangles in a diagram; see Figure \ref{fig:tri-IR1}, for example. The key point in Reidemeister's proof is to track the possibly interesting changes, and list the behaviour. In the case at hand, modulo the requirement that we study equivariant pairs of triangles, the proof proceeds as before and gives rise to IR1, IR2, and IR3 for triangles off the axis. For triangles in the axis, the reader can check that R1 and R2 occur (see Figure \ref{fig:moves}), but R3 does not. The interesting point is the appearance of new {\it mixed} moves involving on- and off-axis crossings. We show these in Figure \ref{fig:interesting-cases}. Once one has a list of all the possible interesting triangles, one can reduce the set of moves further by removing redundancies.  For example, changing the top crossing on the lefthand diagram and the bottom crossing on the righthand diagram in move M3 gives one such redundant involutive Reidemeister move (this one is a consequence of M2, M3, IR2, and IR3). \end{proof}

\begin{figure}[ht!]
\includegraphics[scale=0.75]{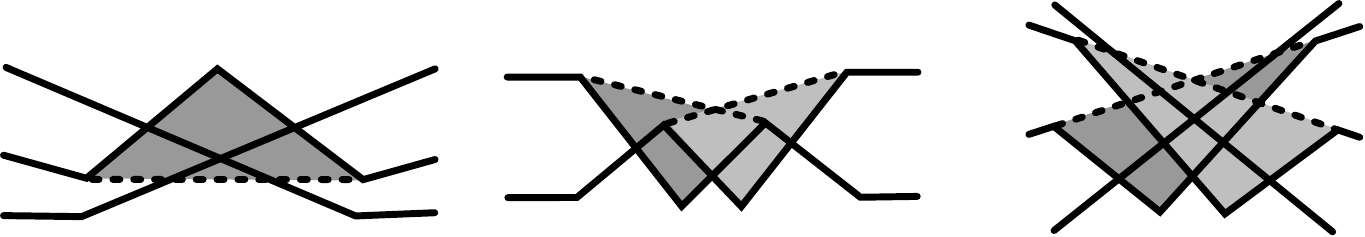}
\caption{Configurations of equivariant triangles giving rise to the mixed involutive Reidemeister moves M1, M2, and M3 (left to right).}\label{fig:interesting-cases}
\end{figure}

\subsection{Strongly invertible knots.}
\label{subsec:stronglyinvertible}
	Sakuma defines a strongly invertible knot to be a pair $(K,h)$ consisting of a knot $K \subset S^3$ and an orientation preserving involution $h\co S^3 \rightarrow S^3$ with $h(K) = K$, but with orientation reversed \cite{Sakuma1986}. Two pairs $(K,h)$ and $(K',h')$ are Sakuma-equivalent if there is an orientation preserving homeomorphism $f\co S^3\to S^3$ for which $f(K)=K'$ and $f\circ h\circ f^{-1}=h'$.  From this point of view, invariants of strongly invertible knots can be viewed as invariants of conjugacy classes of elements in the symmetry group of a knot (the mapping class group of the knot complement). Let us see how far the notion that we work with in this paper (in which the involution is fixed and we consider equivariant links up to equivariant isotopy) is equivalent to Sakuma's.
	\begin{proposition}
		\label{prop:sakumaequivalenceclasses}
		Suppose that $D$ and $D'$ are two involutive diagrams of strongly invertible knots that are Sakuma-equivalent.  Then $D$ and $D'$ are related by a finite sequence of involutive Reidemeister moves together with a global move illustrated in Figure \ref{fig:global}.
	\end{proposition}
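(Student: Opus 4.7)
\medskip

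\noindent\textbf{Proof plan.} The plan is to translate a Sakuma-equivalence into an equivariant ambient isotopy (plus at most one discrete move) by analyzing the equivariant mapping class group of $(S^3,\tau)$ and then invoking Theorem~\ref{thm:R}.

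First I would observe that a Sakuma-equivalence between $(K,\tau)$ and $(K',\tau)$ is precisely an orientation-preserving self-homeomorphism $f\co S^3\to S^3$ with $f\tau=\tau f$ and $f(K)=K'$. Thus $f$ lies in the group $\mathrm{Homeo}^{+}_{\tau}(S^3)$ of $\tau$-equivariant orientation-preserving self-homeomorphisms of $S^3$. The proposition follows at once if I can establish:
\begin{itemize}
\item[(a)] every element of $\mathrm{Homeo}^{+}_{\tau}(S^3)$ is equivariantly isotopic to either the identity or to a single distinguished involution $\sigma\in\mathrm{Homeo}^{+}_{\tau}(S^3)$, and
\item[(b)] the involution $\sigma$ realizes the global move of Figure \ref{fig:global} at the diagrammatic level.
\end{itemize}

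Granting (a) and (b), the argument closes as follows. Given $f$ as above, equivariant isotopy through $\tau$-commuting homeomorphisms carries $f$ to either $\mathrm{id}$ or $\sigma$. In the first case, restricting the equivariant isotopy to $K$ yields an equivariant ambient isotopy from $K$ to $K'$, and Theorem~\ref{thm:R} converts this into a finite sequence of involutive Reidemeister moves between $D$ and $D'$. In the second case, one first applies $\sigma$ to $D$ (producing the diagram obtained from $D$ by the global move), after which the residual equivariance $f\circ\sigma^{-1}$ is equivariantly isotopic to the identity and Theorem~\ref{thm:R} again applies.

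The main obstacle is establishing (a): computing $\pi_0\bigl(\mathrm{Homeo}^{+}_{\tau}(S^3)\bigr)$. I would approach this by passing to the quotient orbifold $S^3/\tau$, which by the positive resolution of the Smith conjecture is $S^3$ with branch set the unknotted circle $\bar{\bf a}$. An equivariant homeomorphism descends to a self-homeomorphism of the quotient $S^3$ preserving $\bar{\bf a}$ setwise, and conversely any such homeomorphism lifts uniquely once one specifies the lift on a point off the branch locus. Computing the orientation-preserving mapping class group of the pair $(S^3,\bar{\bf a})$ with $\bar{\bf a}$ unoriented is a classical exercise: the complement of $\bar{\bf a}$ is an open solid torus, and Alexander's trick applied separately to the two solid tori meeting along $\partial N(\bar{\bf a})$ shows the MCG is generated by orientation reversal on $\bar{\bf a}$ and by the involution exchanging meridian with longitude. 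Lifting these classes back through the branched cover and checking which of them yield distinct equivariant isotopy classes, one concludes that $\pi_0\bigl(\mathrm{Homeo}^{+}_{\tau}(S^3)\bigr)\cong \Z/2$.

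Finally, for (b), I would exhibit $\sigma$ as the concrete $\tau$-equivariant involution of the model $S^3=S^2\times\R\cup\{\pm\infty\}$ given by $(p,q)\mapsto (p,-q)$ (which swaps $+\infty$ and $-\infty$ and preserves ${\bf a}$), and observe directly that on a generic involutive diagram its effect is precisely the global move of Figure~\ref{fig:global}. A small verification is needed to rule out the possibility that the two candidate generators from the quotient MCG give inequivalent diagrammatic moves; this is where one checks that equivariant planar isotopy absorbs all but one of them, matching exactly the list of moves appearing in Figure \ref{fig:moves} together with the single global move.
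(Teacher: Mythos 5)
Your overall architecture -- reduce to computing $\pi_0$ of the equivariant diffeomorphism group, pass to the quotient of $(S^3,\tau)$, and quote a hard result about diffeomorphism groups of 3-manifolds -- is the same as the paper's, but several concrete steps in your execution are wrong.

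\textbf{The representative $\sigma$ is orientation-reversing.} In the model $S^3 = (S^2\times\R)\cup\{\pm\infty\}$ with $\tau(p,q)=(R(p),-q)$, your proposed map $\sigma(p,q)=(p,-q)$ preserves the $S^2$ orientation but reverses the $\R$ direction, hence is \emph{orientation-reversing} on $S^3$; it does not lie in $\mathrm{Homeo}^+_\tau(S^3)$ at all. The tell is that the global move of Figure~\ref{fig:global} must reverse orientation on the fixed axis $\bf a$ (this is exactly what separates the two components $\mathrm{Diff}^\pm_\tau(S^3)$ in the paper), whereas your $\sigma$ restricts to the identity on $\bf a$. A correct representative is the lift $(p,q)\mapsto(\rho(p),q)$ of a half-rotation $\rho$ of $S^2$ about an axis through two points of $\bf a$; this commutes with $R$, is orientation-preserving on $S^3$, and reverses orientation on $\bf a$.

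\textbf{The quotient mapping class group is misdescribed.} The ``involution exchanging meridian with longitude'' does not exist as a self-diffeomorphism of the pair $(S^3,\bar{\bf a})$: a self-diffeomorphism restricts to $\partial N(\bar{\bf a})$, and since it must extend over both $N(\bar{\bf a})$ and its complementary solid torus, the induced map on $H_1(T^2)$ must send meridian to $\pm$meridian and longitude to $\pm$longitude; the swap matrix is excluded. The orientation-preserving mapping class group of the pair $(S^3,U)$ with $U$ unknotted is $\Z/2$, generated by inversion of $U$ alone. Your final answer happens to agree, but the reasoning is not repairable as stated. Moreover, ``Alexander's trick'' applies to balls, not solid tori; the paper correctly cites Hatcher's resolution of the Smale conjecture for connectivity (indeed contractibility) of $\mathrm{Diff}$ of the solid torus rel boundary. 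One can give a shorter $\pi_0$ argument via an isotopy to fix a meridian disk, but you would need to say so.

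\textbf{Lifting isotopies across the branch locus.} You pass to the quotient at the outset and then propose to lift isotopies of $(S^3,\bar{\bf a})$ back to $\tau$-equivariant isotopies. Away from $\bar{\bf a}$ this is ordinary covering-space lifting, but smoothness of the lift along the branch locus needs an argument. The paper deliberately avoids this: it first produces an equivariant isotopy that makes the diffeomorphism the identity in a neighbourhood of $\bf a$ (the bulk of the proof, involving control of $\phi\vert_{\bf a}$, of $d\phi\vert_{\bf a}$ via realizable loops in $\mathrm{GL}_2(\R)$, and linearization), so that passing to the quotient happens entirely over a genuine covering. That straightening step is absent from your plan and is where most of the real work in the paper's proof lies.
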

	
	\begin{figure}[ht]
		\includegraphics[scale=0.9]{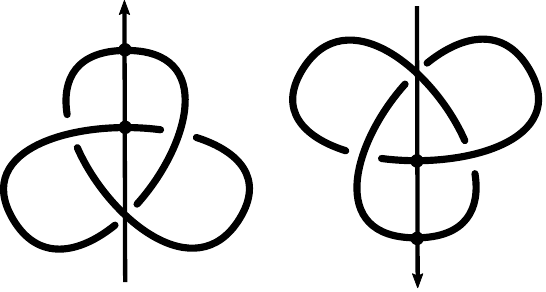}
		\caption{A global move on involutive diagrams, illustrated here for the strong inversion on the right-handed trefoil.  The move consists of a half-rotation of the 2-sphere on which the diagram is drawn.  The fixed points of the half-rotation are two antipodal points each lying on the great circle which is the image of the fixed point set ${\bf a}$ of the involution $\tau$.}\label{fig:global}
	\end{figure}
	
	Note that the invariants constructed in this paper are of course also invariant under the global move of Figure \ref{fig:global}.
	
	\begin{proof}
		First note that any two involutions of the $3$-sphere are related by conjugation by a diffeomorphism (in fact, we may assume isotopy \cite{Waldhausen1969}).  This implies that each Sakuma-equivalence class contains a pair $(K,\tau)$ where $\tau$ is the standard involution.  Then $(K,\tau)$ is Sakuma-equivalent to $(K', \tau)$ if there exists an orientation-preserving diffeomorphism of the 3-sphere $f \co S^3 \rightarrow S^3$ with $f(K) = K'$ such that $f$ commutes with $\tau$.  Let us therefore write ${\rm Diff}_\tau(S^3)$ for the orientation-preserving diffeomorphisms of $S^3$ that commute with $\tau$.  This space splits as the disjoint union of two homeomorphic spaces
		\[ {\rm Diff}_\tau(S^3) = {\rm Diff}^+_\tau(S^3) \sqcup {\rm Diff}^-_\tau(S^3) \]
		where the superscript $+$ (respectively $-$) indicates those $\tau$-equivarient orientation-preserving homeomorphisms of $S^3$ that preserve (respectively reverse) the orientation of the fixed point set~${\bf a}$.
		
		The global move on involutive diagrams is realized by a $\tau$-equivariant diffeomorphism that interchanges these two homeomorphic spaces.  Therefore we shall be done if we can see that ${\rm Diff}^+_\tau(S^3)$ is path-connected.  So suppose that $\phi \in {\rm Diff}^+_\tau(S^3)$---we begin by first showing that there is a path from $\phi$ to an element of ${\rm Diff}^+_\tau(S^3)$ which is the identity in a small neighbourhood of ${\bf a}$.  In the non-equivariant setting, this kind of construction is fairly standard and used in showing that handle attachment depends only on the isotopy class of the attaching map and the framing of the normal bundle.  However, since it is important for us that the construction works $\tau$-equivariantly, we outline the stages below.
			
		Let us pick a model for $\tau$ in a neighbourhood of ${\bf a}$.  We write the neighbourhood as ${\bf a} \times D^2$ where $D^2$ is the unit disc and $\tau$ is just rotation by $\pi$ of each $D^2$ fiber.  The framing of ${\bf a}$ given by pulling back a tangent vector to $0 \in D^2$ should corresponding the trivial framing of ${\bf a} \in S^3$.
			
		The map $\phi \vert_{\bf a} \co {\bf a} \rightarrow {\bf a}$ is an orientation preserving diffeomorphism and so is isotopic to the identity map on ${\bf a}$.  Using the radial coordinate of $D^2$ as a time coordinate along this isotopy, we can construct a path from $\phi$ to $\phi_1 \in {\rm Diff}^+_\tau(S^3)$ which is the identity when restricted to ${\bf a}$.
		
		If $p \in {\bf a}$ and $D^2_p$ is the $D^2$ fiber over $p$, then the differential
		\[ d\phi_1 \co T_p(S^3) \rightarrow T_p(S^3) \]
		is the identity when restricted to tangent vectors along ${\bf a}$ and, since $\phi_1$ is $\tau$-equivariant, is equivalent to its restriction $T_p(D^2_p) \rightarrow T_p(D^2_p)$.  Therefore we shall think of $d\phi_1 \vert_{\bf a}$ as a smooth map ${\bf a} \rightarrow {\rm GL}_2(\R)$.  We note that $d\phi_1 \vert_{\bf a}$ is isotopic to the constant loop at the identity element of ${\rm GL}_2(\R)$.  We wish to convince ourselves that we can construct some such isotopy induced by an isotopy of $\phi_1 \in {\rm Diff}^+_\tau(S^3)$.
		
		Given a loop $\gamma \co {\bf a} \rightarrow {\rm GL}_2(\R)$ we shall say that $\gamma$ is \emph{realizable} if there exists a path in ${\rm Diff}^+_\tau(S^3)$, consisting of diffeomorphisms which are the identity on ${\bf a}$ and the identity outside a neighbourhood of ${\bf a}$, between the identity and $\psi \in {\rm Diff}^+_\tau(S^3)$ with $d \psi = \gamma$.
		
		If $r,s \co S^1 \rightarrow \R$ are smooth with $r > 0$, and ${\rm U}(1)$ is rotations then we claim that the following $\gamma$ are realizable:
	\[ \gamma = \left( \begin{array}{cc} r & 0 \\ 0 & 1 \end{array} \right), \gamma = \left( \begin{array}{cc} 1 & 0 \\ 0 & r \end{array} \right), \gamma = \left( \begin{array}{cc} 1 & s \\ 0 & 1 \end{array} \right), \gamma \co {\bf a} \rightarrow {\rm U}(1) \]
		where the final $\gamma$ has winding number $0$.  In fact it is possible to realize all such $\gamma$ by paths in ${\rm Diff}^+_\tau(S^3)$ through diffeomorphisms which are fiber-preserving and are linear on each $D^2$ fibre in a neighbourhood of ${\bf a}$.  We do not present explicit constructions.
		
		By composing $\phi_1$  with a sequence of such paths in ${\rm Diff}^+_\tau(S^3)$, we obtain a path from $\phi_1$ to $\phi_2$ which satisfies that $d \phi_2$ is the constant identity path.  Finally, we linearize $\phi_2$ in a neighbourhood of ${\bf a}$ to give a path from $\phi_2$ to $\phi_3$ which is the identity on a neighbourhood of ${\bf a}$.
		
		To complete our proof, we would like to see that there is a path in ${\rm Diff}^+_\tau(S^3)$ from $\phi_3$ to the identity map, and we shall see this by passing to the quotient.  Let us write $\overline{\bf a} \subset S^3$ for the image of the fixed point set under $\tau$.  
		There is an obvious homeomorphism between the space of $\tau$-equivariant orientation-preserving diffeomorphisms which are the identity on some neighbourhood of ${\bf a}$, and the space of diffeomorphisms of the quotient $S^3$ which are the identity on some neighbourhood of $\overline{\bf a}$.
		
		Finally note that there is a path from any point in the latter space to the identity map on $S^3$.  This follows from the fact that the space of diffeomorphisms of the solid torus which restrict to the identity on the boundary is known to be contractible.  Indeed, this is equivalent to the Smale conjecture, (see Hatcher \cite{Hatcher1983}, in particular statement 9 in the Appendix).
		\end{proof}

	\begin{remark}
		\label{rem:globalsakumamovenecessary}
	The authors do not know if there exists a pair of involutive diagrams $D$ and $D'$ of Sakuma-equivalent knots which are not related by some finite sequence of involutive Reidemeister moves (forgetting the global move).
	\end{remark}
	
	


\section{Definition of the invariant and proof strategy}
\label{sec:hom_alg}
In this section we give the formal definition of our invariant and collect some homological algebra necessary to the proof of invariance.

\subsection{Structure of the main and ancilliary invariants}
The main invariant takes the form of (the isomorphism class of) a finite dimensional triply-graded vector space over $\bF$
\[ \Kht(L)\cong\bigoplus_{i,j\in\Z; k \in \frac{1}{2}\Z}\Kht^{i,j,k}(L) \]
as in the statement of Theorem \ref{thm:triply-graded-invariant}.
The reader will note that for general involutive links, the grading $k$ runs over the half-integers.  However for strongly invertible links (our main object of study) we shall see that the invariant vanishes for non-integral values of $k$; see Remark \ref{rem:k_grading_integral}.

This invariant is the associated graded vector space to a singly-graded and bifiltered vector space.  It will be seen, in fact, that our whole homological construction splits along the quantum grading (denoted $j$).  Hence the reader will readily see how the $j$-grading arises.

On the other hand, how the bifiltration arises and how it is seen to be an invariant are both a little unusual.  We take some time in the rest of this section in laying out the formal algebraic structure which underlies the invariant and our proof of invariance.

\subsection{Bifiltrations and associated graded spaces}

In this paper we often consider vector spaces which carry two filtrations simultaneously.  We have chosen the nesting conventions of such \emph{bifiltrations} to match with the natural indices and filtration directions of our diagrammatic constructions.

\begin{definition}
	\label{defn:bifiltration}
Suppose that $V$ is a finite dimensional vector space over $\bF$.  A \emph{bifiltration} on $V$ is two nestings of subspaces of $V$
\[ \cdots \subseteq \F^{i-1} V \subseteq \F^i V \subseteq \F^{i+1} V \subseteq \cdots \]
and
\[ \cdots \subseteq \G^{k+\frac{1}{2}} V \subseteq \G^k V \subseteq \G^{k-\frac{1}{2}} V \subseteq \cdots \]
such that
\[ \bigcap_{i \in \Z} \F^i V = 0 = \bigcap_{k \in \frac{1}{2}\Z} \G^k V \,\,\, {\rm and} \,\,\, \bigcup_{i \in \Z} \F^i V = V = \bigcup_{k \in \frac{1}{2}\Z} \G^k V {\rm .} \]
\end{definition}

The reader will notice that the $\F$ filtration and the $\G$ filtration go in different directions with increasing index, and she will also notice that the $\G$ filtration is indexed by half integers.

\begin{definition}
	\label{defn:bifiltered_maps}
	
	A linear map between bifiltered vector spaces is called \emph{bifiltered} if it is filtered with respect to both filtrations, and is called a \emph{bifiltered isomorphism} if it has a bifiltered inverse.
\end{definition}

\begin{definition}
	\label{defn:associated_graded}
	Suppose that $V$ is a bifiltered vector space as in Definition \ref{defn:bifiltration}.  We define the \emph{associated graded} vector space to the bifiltration to be the quotient vector space
	\[ V^{i,k} = \frac{\F^i V \cap \G^k V}{ (\F^i V \cap \G^{k + \frac{1}{2}} V) + (\F^{i-1} V \cap \G^k V) } \]
	for $i \in \Z$ and $k \in \frac{1}{2} \Z$.
\end{definition}

Note 
that bifiltered vector spaces are classified up to bifiltered isomorphism by the dimensions of the graded pieces of their associated graded vector spaces.

\subsection{Spectral sequences and morphisms of spectral sequences}
\label{subsec:maps_of_spec_sequences}
We turn now to spectral sequences.  These arise in homological algebra when a cochain complex carries a filtration.  We summarize the results that we need.  Perhaps the most unusual aspect of the set-up is that there is no cohomological grading that is respected by the differentials, otherwise the results are standard.  We work here with increasing filtrations indexed by the integers, but the same results apply \emph{mutatis mutandis} to decreasing filtrations indexed by the half-integers.

\label{subsec:spec_sequences}
Suppose that $C$ is a finite dimensional vector space carrying a filtration
\[ \cdots \subseteq \Kf^{l-1} C \subseteq \Kf^l C \subseteq \Kf^{l+1} C \subseteq \cdots, \,\,\, \bigcup_l \Kf^l C = C,\,\,\, \bigcap_l \Kf^l C = 0 {\rm .} \]
Further suppose that there is a differential $\partial : C \rightarrow C$ (so satisfying $\partial^2 = 0$), and let $L$ be an integer satisfying
\[ \partial(\Kf^l C) \subseteq \Kf^{l + L} C \]
(the reader will note that finite dimensionality ensures that \emph{some} such $L$ exists).

In such a situation one obtains a spectral sequence abutting to the cohomology of $(C,\partial)$.  The $E^1$ page of this spectral sequence consists of the associated graded groups to the filtration $E^{1,l}_\Kf = \Kf^l C / \Kf^{l-1} C$ together with the differential $\partial^1_\Kf \co E^{1,l}_\Kf \rightarrow E^{1,l+L}_\Kf$ induced by $\partial$.  Then for $n \geq 2$, the $E^n$ page of the spectral sequence is obtained as the cohomology of the $E^{n-1}$ page (so that $E_\Kf^{n,l}$ is a subquotient of $E_\Kf^{n-1,l}$).





Since $C$ was assumed to be finite dimensional, there exists some $N\gg0$ such that $E_\Kf^{n+1,l} \equiv E_\Kf^{n,l}$ for all $l$ and all $n \geq N$.  We define $E_\Kf^{\infty,l} \equiv E_\Kf^{N,l}$ and we have
\[ E_\Kf^{\infty,l} \equiv \Kf^l H / \Kf^{l-1} H \]
where $H = \ker (\partial) / {\rm im} (\partial)$ and we write $\Kf$ also for the induced filtration on $H$.

Suppose that $C$ and $\overline{C}$ are two filtered vector spaces carrying filtered differentials
and suppose that $\phi \co C \rightarrow \overline{C}$ is a filtered cochain map (so $\phi(\Kf^l C) \subseteq \Kf^l \overline{C}$ for all $l$).

Then $\phi$ induces cochain maps $\phi^n \co E_\Kf^{n,l} \rightarrow \overline{E}_\Kf^{n,l}$ at each page of the spectral sequences and $\phi^{n+1}$ is the map induced on cohomology by $\phi^n$ for each $n \geq 1$. These three statements are equivalent:
\begin{itemize}
	\item The cochain map $\phi$ induces a filtered map $H \rightarrow \overline{H}$ which has a filtered inverse.
	\item The cochain map $\phi$ induces isomorphisms between the associated graded vector spaces to the cohomology
	\[  H^l =  \Kf^l H / \Kf^{l-1} H \rightarrow \Kf^l \overline{H} / \Kf^{l-1} \overline{H} = \overline{H}^l {\rm .} \]
	\item There exists some $n$ such that $\phi$ induces isomorphisms $\phi^n \co E_\Kf^{n,l} \rightarrow \overline{E}_\Kf^{n,l}$.
\end{itemize}

\subsection{Involutive diagrams and involutions on the Khovanov complex}
\label{subsec:involutive_diagrams}
Now we turn to the specific bifiltered cochain complexes that we consider in this paper.
Let $L$ be an involutive link, and let $D$ be an involutive diagram for $L$.

Khovanov cohomology $\Kh^{i,j}(L)$ of the link $L$ (for our purposes) is (the isomorphism class of) an integer bigraded vector space over $\mathbb{F} = \Z/2\Z$.  This may be computed as the cohomology of a bigraded cochain complex associated to the diagram $D$ of $L$:
\[ d \colon \CKh^{i,j}(D) \rightarrow \CKh^{i+1,j}(D) {\rm .} \]

There is an involution, which we also refer to as $\tau$, on the set of smoothings of such an involutive diagram $D$, given by reflecting any smoothing in the axis ${\bf a}$.  Giving a smoothing $s$ of $D$, $\tau$ induces a bijection between the components of $s$ and the components of $\tau(s)$.

This involution $\tau$ gives rise to a map, which we also refer to as $\tau\co \CKh^{i,j}(D) \rightarrow \CKh^{i,j}(D)$.  This is because $\CKh(D)$ splits as a direct sum in which each summand corresponds to a smoothing.  The summands corresponding to $\tau$ and to $\tau(s)$ can then be identified by the permutation of tensor factors according to the bijection of components induced by $\tau$.
Since $\tau^2 = {\rm id}$, we have that $d_\tau := \tau + {\rm id}$ is a differential.

We define $\CKh^j(D) = \oplus_{i} \CKh^{i,j}(D)$.  Since $\tau$ commutes with $d$ we have that
\[ \partial := d + d_\tau \colon \CKh^j(D) \rightarrow \CKh^j(D) \]
is a differential.

\begin{definition}
	\label{defn:perturbed_cochain_complex}
	We write $\CKht^j(D)$ for the cochain complex with the differential $\partial$.
\end{definition}

Pending further refinements, we now give the first invariant, which takes the form of a singly-graded finite dimensional vector space.

\begin{definition}
	\label{def:total_cohomology}
We define the $\bF$ vector space $\Ht^j(D)$ to be the cohomology of $\CKht^j(D)$.
\end{definition}

In fact,
this vector space comes equipped with a bifiltration, giving rise to a triply-graded refinement, as well as to two invariant spectral sequences.

\subsection{Two filtrations on the cochain groups and the invariant.}
\label{subsec:filtration_definition}
We now give two filtrations on the cochain group $\CKht^j(D)$.  The first one is just a filtration induced by the usual cohomological grading.

\begin{definition}
	\label{defn:F-filtration}
	For $i \in \Z$ we define
	\[ \F^i \CKht^j(D) = \bigoplus_{l \leq i} \CKh^{l,j}(D) \subseteq \CKht^j(D) {\rm .} \]
\end{definition}

We shall work a little harder to define our second filtration.
	Firstly, we can write
	\[ \CKht^j(D) = \bigoplus_{s} V(s) \]
	where the sum is taken over all smoothings $s$ of $D$ and $V(s)$ is the quantum degree $j$ part of the chain group summand corresponding to $s$.
	
\begin{definition}
	\label{defn:kasagrading}
	We associate a half-integer weight $w(s) \in \frac{1}{2} \Z$ to each smoothing $s$ by summing up local contributions at each crossing of $D$.  The local contributions fall into three types corresponding to the type of crossing being smoothed.  We write $o(c)$ for the oriented smoothing of a crossing and $u(c)$ for the anti-oriented smoothing of a crossing.
	
	\begin{itemize}
		\item[(1)] Suppose the crossing $c$ is off the axis of symmetry.  Then the local contribution of $o(c)$ is $0$.  If $c$ is positive (respectively negative) then $u(c)$ contributes $\frac{1}{2}$ (respectively $-\frac{1}{2}$).
		
		\item[(2)] Suppose the crossing $c$ is on axis and the involution reverses the orientations of $c$.  Then the local contribution of $o(c)$ is $0$.  If $c$ is positive (respectively negative) then $u(c)$ contributes $1$ (respectively $-1$).
		
		\item[(3)] Finally, suppose the crossing $c$ is on axis and the involution preserves the orientations of $c$.   If $c$ is positive then the local contribution of $o(c)$ is $-\frac{1}{2}$ and that of $u(c)$ is $\frac{1}{2}$.  If $c$ is negative then the local contribution of $o(c)$ is $\frac{1}{2}$ and that of $u(c)$ is $-\frac{1}{2}$.
	\end{itemize}

This gives the $k$\emph{-grading} on $\CKht^j(D)$, where we say that $v \in V(s)$ is of $k$-grading $w(s)$.
\end{definition}

\begin{remark}
\label{rem:k_grading_integral} The reader will note two special features when considering a diagram of a strong inversion on a link. First, case (3)  of the definition above does not arise, and second, the oriented resolution always has weight 0.
\end{remark}

We now take a filtration associated to the $k$-grading defined above.
	
\begin{definition}
		\label{defn:G-filtration}
	For $k \in \frac{1}{2} \Z$ we define
	\[ \G^k \CKht^j(D) = \bigoplus_{w(s) \geq k} V(s) \subseteq \CKht^j(D) {\rm .} \]
\end{definition}

With these two filtrations on the cochain complex, we can now define the triply-graded version of our invariant.

\begin{definition}
The filtrations $\F$ and $\G$ induce a bifiltration on $\Ht^j(D)$, and $\Kht^{i,j,k}(D)$ is defined as the associated graded space to this bifiltration.
\end{definition}

Furthermore, associated with each filtration of the cochain complex is a spectral sequence, and each of these is an invariant (after a certain page).

\begin{remark}
	\label{rem:kisreallyanintegergrading}
	It turns out that although the $\G$ filtration is indexed by half-integers, all non-trivial differentials in the associated spectral sequence are integer graded.  Thus we write $E^2_\G$ for the page resulting from differentials of grading $0$, and then $E^3_\G$ for the page resulting from the differentials of grading $-1$ and so on.
\end{remark}


\subsection{Proof of invariance of filtrations: the strategy}
\label{subsec:outline_invariance_proof}
Suppose then that $D$ and $D'$ are involutive diagrams differing by a single involutive Reidemeister move; compare Figure \ref {fig:moves} and, in particular, Theorem \ref{thm:R}.  We shall first give a cochain map 
\[ f \co \CKht^j(D_1) \rightarrow \CKht^j(D_2) \]
with respect to the total differential $\partial$ that preserves the $\F$ filtration
\[ f(\F^i \CKht^j(D_1)) \subseteq \F^i \CKht^j(D_2) {\rm .} \]

The map $f$ is constructed so that it induces an isomorphism $f^2_\F \co E^2_\F(D) \rightarrow E^2_\F(D')$ on the second pages of the spectral sequences coming from the $\F$ filtration.  In fact, $f^2_\F$ will be a composition of Khovanov isomorphisms between the Khovanov cohomologies of link diagrams differing by a sequence of Reidemeister moves.

This tells us that $f$ induces an $\F$-filtered isomorphism $f^* \co \Ht^j(D) \rightarrow \Ht^j(D')$.

However, we shall find that the cochain map $f$ does not necessarily preserve the $\G^k$ filtrations.  We then give a map
\[ h \co \CKht^j(D) \rightarrow \CKht^j(D') \]
and define
\[ g = f + \partial h + h \partial {\rm .} \]

It follows that $g$ is necessarily a cochain map, and induces the same map on cohomology as that induced by $f$
\[ g^* = f^* \colon \Ht^j(D) \rightarrow \Ht^j(D') {\rm .}\]
Furthermore, by choosing $h$ carefully, we are able to achieve that $g$ preserves the $\G^k$ filtration
\[ g(\G^k \CKht^j(D)) \subseteq \G^k \CKht^j(D') {\rm .} \]
We then check that $g$ induces an isomorphism $g^3_\G \co E^3_\G(D) \rightarrow E^3_\G(D')$, telling us that $g$ induces a $\G$-filtered isomorphism $g^* \co \Ht^j(D) \rightarrow \Ht^j(D')$.

Thus we have that the map $g^* = f^* \co \Ht^j(D) \rightarrow \Ht^j(D')$ is a bifiltered isomorphism.

\subsection{The reduced invariant.}
\label{subsec:reduced_invariant}
The invariants of an involutive link $L$ that we have been describing so far have been the \emph{unreduced} invariants $\Ht^j(L)$ and $\Kht^{i,j,k}(L)$.  If $L$ has a component that intersects the axis of symmetry then choosing one of the points of intersection as a basepoint allows us to define \emph{reduced} invariants $\Htredj(L)$ and $\Khredt^{i,j,k}(L)$ in a similar fashion to the reduced version of Khovanov cohomology.  As a consequence of working over $\bF$, the unreduced invariant splits as the direct sum of two copies of the reduced invariant so that, up to isomorphism, the choice of component or of basepoint does not matter.  

Suppose that $L$ is an involutive link with involutive diagram $D$ with a basepoint as described above.  We consider the cochain complex $\CKht^j(D)$ with respect to the total differential $\partial = d + d_\tau$.  We may write
\[ \CKht^j(D) = {\rm cone}(\Phi) \]
where
\[ \Phi : \CKhtred^j(D)\{1\} \rightarrow \CKhtred^j(D)\{-1\} \]
is a cochain map.  Here we write $\CKhtred^j(D)\{-1\}$ (respectively $\CKhtred^j(D)\{1\}$) for the subcomplex (respectively quotient complex) of $\CKht^j(D)$ in which the basepointed component of each smoothing is decorated with a $v_-$ (respectively $v_+$).  The curly brackets denote a shift in the quantum degree.

\begin{definition}
	\label{def:reduced_invariant}
	$\Khredt^{i,j,k}(D)$ is the triply graded $\bF$ vector space given as the associated graded space to the bifiltered singly graded cohomology $\Htredj(D)$ of $\CKhtred^j(D)$.
\end{definition}


\begin{proof}[Proof of Proposition \ref{prop:unreduced_splits}.]
	The statement of the proposition follows from the observation that the cochain map $\Phi$ defined above is homotopic via a bifiltered degree $(0,0)$ chain homotopy to the zero map.
	
	Indeed, define the bilfiltered degree $(0,0)$ chain homotopy
	\[ h \co \CKhtred^j(D)\{1\} \rightarrow \CKhtred^j(D)\{-1\} \]
	on the standard basis for $\CKhtred^j(D)\{1\}$ in the following way.  Suppose that $v \in \CKhtred^j(D)\{1\}$ is a basis element thought of as a decorated smoothing of $D$ (in which the basepointed component is decorated with a $v_+$).  We define $h(v)$ to be the sum of all those decorations of the same smoothing as $v$, of the same quantum degree as $v$, and whose decorations differ on exactly two components of the smoothing (necessarily including the basepointed component).
	
	Then we observe that
	\[ \Phi = \partial h + h \partial \]
	so that $\Phi$ is chain homotopic to the zero map.  In fact, note that $h$ commutes with $d_\tau$ so that this observation amounts to $\Phi = dh + hd$, which is straightforward to verify.
\end{proof}

All results derived in this paper are for convenience stated and proved for the unreduced case.  But for every result there is a reduced version which may either be proved directly \emph{mutatis mutandis}, or derived from Proposition \ref{prop:unreduced_splits} and the unreduced statement.  In particular there is a spectral sequence from the reduced Khovanov cohomology of an involutive link $L$ (with a basepoint on the axis) converging to the associated graded space to the $\F$ filtration on $\Htredj(L)$.

\section{The $E^2_\G$ page.}
\label{sec:E2G}

Let $D$ be an involutive link diagram.  In this section we give an explicit basis for the $E^2_\G (D)$ page of the spectral sequence associated to the $\G$-filtration.  We further describe the differential on this page with respect to this basis.  This will be important to us both for our proofs of invariance and for making calculations for particular links and classes of link.

\subsection{Gauss elimination and spectral sequences.}
\label{subsec:Gauss_elim}
A useful way both to think about cohomology and to understand spectral sequences induced by filtrations is via the process of repeated Gauss elimination.  While this may be familiar to many, since we shall often refer to the process in the sequel we use this subsection to give a description for easy reference.

We state the following lemma, which is adapted to our context, without proof.

\begin{lemma}[\bf Gauss elimination]
	\label{lem:Gausselimination}
	Suppose that $A_i$ is a finite dimensional $\bF$ vector space for $1 \leq i \leq 3$, and further suppose that $f_{ij} \colon A_i \rightarrow A_j$ is a linear map for all $1 \leq i,j \leq 3$.
	We define the map
	\[ D \colon A_1 \oplus A_2 \oplus A_3 \rightarrow A_1 \oplus A_2 \oplus A_3 {\rm .} \]
	by the matrix $(f_{ij})$.
	Then if $D^2 = 0$ and $f_{23}$ is an isomorphism, there is a homotopy equivalence between the  complex $(A_1 \oplus A_2 \oplus A_3, D)$ and the  complex $(A_1, f_{11} + f_{21} \circ f_{23}^{-1} \circ f_{13})$.
	We say that the latter complex is obtained from the first by \emph{Gauss elimination along}~$f_{23}$.
\end{lemma}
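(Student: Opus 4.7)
The plan is to exhibit explicit chain maps $\iota \colon (A_1, d') \to (A_1 \oplus A_2 \oplus A_3, D)$ and $\pi \colon (A_1 \oplus A_2 \oplus A_3, D) \to (A_1, d')$, where $d' = f_{11} + f_{21} \circ f_{23}^{-1} \circ f_{13}$, together with a chain homotopy $h$ on the larger complex witnessing $\iota \circ \pi \simeq \mathrm{id}$, while $\pi \circ \iota = \mathrm{id}_{A_1}$ will hold on the nose. Guided by the invertibility of $f_{23}$, I would set
\[ \iota(a) = \bigl(a,\, f_{23}^{-1} f_{13}(a),\, 0\bigr), \quad \pi(a_1, a_2, a_3) = a_1 + f_{21} f_{23}^{-1}(a_3), \quad h(a_1, a_2, a_3) = \bigl(0,\, f_{23}^{-1}(a_3),\, 0\bigr). \]

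The verification that $\iota$ is a chain map proceeds component by component: the $A_3$ coordinate of $D \iota(a)$ telescopes to zero because $f_{23} \circ f_{23}^{-1} = \mathrm{id}$, the $A_1$ coordinate returns $d'(a)$ by construction, and the $A_2$ coordinate agrees with $\iota d'(a)$ once the relation $\sum_j f_{jk} \circ f_{ij} = 0$ (extracted from $D^2 = 0$) is invoked at $(i, k) = (2, 3)$. The corresponding check for $\pi$ uses the same family of relations at $(i, k) = (2, 1)$. For the homotopy identity $Dh + hD = \mathrm{id} + \iota \pi$ (addition replacing subtraction since we work over $\bF$), a direct expansion again reduces to the $(2, 3)$ relation, now applied to the middle coordinate.

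The principal obstacle is organizational rather than conceptual: there are up to nine scalar relations among the $f_{ij}$ coming from the $3 \times 3$ block decomposition of $D^2 = 0$, and some care is needed to identify which relation feeds into each step. Working over $\bF$ removes all sign concerns and streamlines the bookkeeping considerably. Once these checks are in place, the identities $\pi \circ \iota = \mathrm{id}_{A_1}$ and $\iota \circ \pi \simeq \mathrm{id}$ exhibit $\iota$ and $\pi$ as mutually inverse chain homotopy equivalences between $(A_1, d')$ and $(A_1 \oplus A_2 \oplus A_3, D)$, which is the statement of the lemma.
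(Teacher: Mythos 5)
The paper states this lemma explicitly ``without proof,'' so there is no in-text argument to compare against; your task was to supply one, and what you supply is the standard strong-deformation-retraction construction for Gauss elimination, which is correct. The maps $\iota$, $\pi$, and $h$ you wrote down do the job: $\pi\iota=\mathrm{id}_{A_1}$ on the nose, $Dh+hD=\mathrm{id}+\iota\pi$ over $\bF$, and both $\iota$ and $\pi$ are chain maps, so they give mutually inverse homotopy equivalences. In fact one also has $h\iota=0$, $\pi h=0$, and $h^2=0$, so this is a genuine strong deformation retraction.

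One bookkeeping claim is slightly off. Verifying that $\iota$ is a chain map on the $A_2$ coordinate requires, after left-multiplying by $f_{23}$, the identity
\[
f_{23}f_{12}+f_{23}f_{22}f_{23}^{-1}f_{13}+f_{13}f_{11}+f_{13}f_{21}f_{23}^{-1}f_{13}=0,
\]
and grouping terms shows this uses \emph{both} the $(i,k)=(1,3)$ relation $f_{13}f_{11}+f_{23}f_{12}+f_{33}f_{13}=0$ and the $(i,k)=(2,3)$ relation $f_{13}f_{21}+f_{23}f_{22}+f_{33}f_{23}=0$, not the $(2,3)$ relation alone. Similarly the chain-map check for $\pi$ uses $(2,1)$ together with $(2,3)$, not just $(2,1)$. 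The homotopy identity does come down to $(2,3)$ alone, as you say. These are cosmetic slips in the accounting and do not affect the correctness of the construction; the overall argument is sound.
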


Given a complex $(A, D)$ where $A$ is a finite dimensional $\bF$ vector space, there exists a sequence of Gauss eliminations from which one obtains a final complex for which the induced differential is trivial.  This final complex is then, of course, isomorphic to the cohomology of the original complex. Let us now bring a filtration into the picture to see how one arrives at a spectral sequence using Gauss elimination.

Suppose that $V = \oplus_{l \in \Z} V_l$ is a finite dimensional vector space graded by the index $l$.
We may consider the filtration on the vector space given by
\[ \LL^l V = \oplus_{n \leq l} V_n {\rm .} \]
Suppose further that we have a filtered differential on $V$:
\[ D \colon V \rightarrow V, \,\,\, D^2 = 0, \,\,\, D(\LL^l V) \subseteq \LL^{l + s} V {\rm .} \]

As a vector space, the first page $E^1_\LL$ of the spectral sequence associated with the filtered differential is simply the associated graded vector space to the filtration.  To understand the differential on this page, pick a basis for each summand $V_l$; this choice gives a homogeneous basis for $V$.
If $D$ has a non-trivial component of degree $s$, we choose a pair of basis elements $x \in V_l$, $y \in V_{l+s}$ with $x \not= y$ such that the matrix of $D$ has a non-zero entry in the place corresponding to $(x,y)$.  Then we perform Gauss elimination along the component of the differential $D$ corresponding to this matrix entry, arriving at a complex homotopy equivalent to $(V,D)$ but of dimension that has been lowered by $2$.

We repeat this process eliminating pairs of basis elements until the induced differential has no components of degree $s$.  The resulting vector space is the $E^2_\LL$ page of the spectral sequence.  In general, to obtain the $E^{n+1}_\LL$ page from the $E^n_\LL$ page we Gauss eliminate components of the differential of degree $s - n +1$.

\subsection{The $\G$ filtration and the second page of the spectral sequence.}
In this subsection we give an explicit description of the second page $E^2_\G$ of the spectral sequence corresponding to the $\G$ filtration.

To use the language of Gauss elimination, we first need a basis which is homogeneous with respect to the $k$-grading used to define the $\G$ filtration (see Definition \ref{defn:kasagrading}).  We use the standard basis elements of $\CKht^j(D)$.

\begin{definition}
	\label{defn:stdbasisofKhovanov}
	Recall that the standard basis elements of $\CKht^j(D)$ are given by decorations of smoothings of $D$ in which each component of the smoothing is decorated with a $v_+$ or a $v_-$.  We write $B$ for this basis.
	We write ${\rm Sym}B$ for the subset
	\[ {\rm Sym}B = \{ v \in B : \tau v = v \} \]
	and we write ${\rm ASym}B$ for a choice of subset
	\[ {\rm ASym}B \subseteq B \setminus {\rm Sym}B \]
	such that
	\[ B = {\rm Sym}B \cup {\rm ASym} B \cup \tau {\rm ASym} B \]
	and such that ${\rm ASym}B$ and $\tau {\rm ASym}B$ are disjoint. 
\end{definition}

Now we observe that $E^2_\G(D)$ is the cohomology of $E^1_\G = \CKht^j(D)$ under the components of the differential $\partial$ that preserve the $k$-grading (see Definition \ref{defn:kasagrading}).  Or, in other words, $E^2_\G(D)$ is the cohomology of $\CKht^j(D)$ under the differential $d_\tau = {\rm id} + \tau$.

	

We perform successive Gauss elimination to cancel all pairs of basis elements $(x, \tau x)$ where $x \in {\rm ASym}B$.  After doing this we arrive at the second page of the spectral sequence $E^2_\G$ with a basis which we have identified with ${\rm Sym}B$.

\begin{remark}
	\label{rem:Ggradingis0mod1afterE2}
	Note that elements of ${\rm Sym}B$ are of homogeneous $k$-gradings and that all have the same half-integer $k$-grading modulo $1$.  We shall therefore be writing $E^3_\G$ for the result of taking cohomology with respect to the differentials of grading $1$ on the $E^2_\G$ page (rather than those of grading $\frac{1}{2}$ which are necessarily trivial).
\end{remark}

It remains to determine the differential on the $E^2_\G$ page.  In order to state this, we will require some further notation. We write $\tqft$ for the graded $1+1$ dimensional TQFT used in the definition of the Khovanov cochain complex.

\begin{definition}
	\label{def:KhovanovTQFT}
	Suppose that $D$ is an involutive link diagram and $s$ is a smoothing of $D$ such that $\tau s = s$.  We write $X_s$ for the set of all smoothings obtained by surgering a single on-axis $0$-smoothing of $s$ to a $1$-smoothing.  Furthermore we write $Y_s$ for the set of all smoothings obtained by surgering exactly two off-axis $0$-smoothings of $s$ to $1$-smoothings.  The trace of each surgery gives a $2$-manifold and applying $\tqft$ gives a map
	\[ \tqft_s \colon \tqft(s) \rightarrow \bigoplus_{r \in X_s} \tqft(r) \oplus \bigoplus_{t \in Y_s} \tqft(t)\]
\end{definition}

\begin{proposition}
	\label{prop:E2G_differential}
	Suppose that $v \in {\rm Sym} B \subset E^2_\G$ is given by a decoration of the smoothing $s$.  Let
	\[ \pi \colon \CKht^j(D) \rightarrow \langle {\rm Sym}B \rangle \]
	be projection. Then \[ \partial_\G^2 (v) = \pi\circ\tqft_s(v)\] where $\partial_\G^2$ denotes the differential on $E_\G^2$.
\end{proposition}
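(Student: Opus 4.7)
The plan is to compute $\partial_\G^2$ directly by decomposing $\partial$ into $G$-homogeneous pieces and then running the standard zig-zag argument that extracts the induced differentials on the pages of a filtered spectral sequence.

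First I would split the total differential as $\partial = \partial_0 + \partial_{1/2} + \partial_1$ according to $G$-degree. Since $d_\tau = \id + \tau$ sends each summand $V(s)$ into $V(s)\oplus V(\tau s)$ and $s,\tau s$ carry the same weight, one has $\partial_0 = d_\tau$. A case-by-case inspection of Definition~\ref{defn:kasagrading}, using the Khovanov convention that the $0$-smoothing is the $A$-smoothing (oriented for a positive crossing, anti-oriented for a negative one), shows that the saddle map $d_c$ has $G$-degree $+\tfrac12$ when $c$ is off-axis, and $G$-degree $+1$ in both on-axis cases (2) and (3), regardless of sign. Hence $\partial_{1/2} = \sum_{c\text{ off-axis}} d_c$ and $\partial_1 = \sum_{c\text{ on-axis}} d_c$, and breaking $\partial^2=0$ up by $G$-degree gives in particular the commutation $\partial_0\partial_{1/2} = \partial_{1/2}\partial_0$ over $\bF$.

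Given $v \in {\rm Sym}B$, I would next exhibit a canonical primitive of $\partial_{1/2}(v)$ under $\partial_0$. Using $\tau$-equivariance of the Khovanov differential together with $\tau v = v$, so that $d_{\tau c}(v) = \tau d_c(v)$, the off-axis sum groups into $\tau$-pairs as
\begin{equation*}
\partial_{1/2}(v) \;=\; \sum_{\{c,\tau c\}}\bigl(d_c(v) + \tau d_c(v)\bigr) \;=\; d_\tau\!\left(\sum_{c\in S} d_c(v)\right),
\end{equation*}
where $S$ is a choice of one crossing from each off-axis $\tau$-pair. Setting $h := \sum_{c\in S}d_c(v)$ and $v':= v+h$, the identity $\partial_0 h = \partial_{1/2}(v)$ combined with $\partial_0 v = 0$ and characteristic two gives $\partial v' = \partial_1(v) + \partial_{1/2}(h) + \partial_1(h)$, so $\partial v' \in \G^{k+1}\CKht^j(D)$. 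By the standard spectral-sequence formula, $\partial_\G^2(v)$ is then represented by the $G$-degree $k+1$ piece $\partial_1(v) + \partial_{1/2}(h)$, pushed through the projection $\pi\co \CKht^j(D)\to\langle{\rm Sym}B\rangle$ that realizes the isomorphism $E^2_\G\cong\langle{\rm Sym}B\rangle$. Well-definedness follows from $\pi\circ d_\tau = 0$ together with $\partial_0\partial_{1/2} = \partial_{1/2}\partial_0$, which ensures that changing the choice $S$ modifies $h$ only by a $d_\tau$-coboundary whose image under $\partial_{1/2}$ is again $d_\tau$-exact.

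It remains to recognise $\pi\bigl(\partial_1(v) + \partial_{1/2}(h)\bigr)$ as $\pi\circ\tqft_s(v)$. The first summand $\pi(\partial_1 v) = \sum_{c\text{ on-axis}}\pi(d_c v)$ is, by definition of $\tqft_s$, the $X_s$-component of $\pi\circ\tqft_s(v)$. Expanding the second,
\begin{equation*}
\partial_{1/2}(h) \;=\; \sum_{c'\text{ off-axis}}\sum_{c\in S} d_{c'}d_c(v),
\end{equation*}
the underlying smoothing $s_{c,c'}$ is $\tau$-fixed if and only if $c' = \tau c$, since both $c$ and $c'$ are off-axis and hence not individually $\tau$-fixed; every other term is annihilated by $\pi$, and diagonal pairs $c'=c$ vanish by $d_c^2=0$. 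What survives, $\sum_{c\in S}\pi(d_{\tau c}d_c(v))$, is precisely the $Y_s$-component of $\pi\circ\tqft_s(v)$ by functoriality of $\tqft$ on the cobordism consisting of the two disjoint saddles at $c$ and $\tau c$. The conceptual point, and the main subtlety to flag, is that the $Y_s$-term does not appear in $\partial$ directly: it is forced by the zig-zag correction $h$, which is in turn required precisely because $\partial_{1/2}$ vanishes on $E^2_\G$ (cf.\ Remark~\ref{rem:Ggradingis0mod1afterE2}); the would-be cross-contributions from off-axis crossings in distinct $\tau$-pairs cancel in characteristic two via $d_{c'}d_c + d_c d_{c'} = 0$, so no spurious terms survive.
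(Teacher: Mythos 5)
Your proof is correct, and it takes a genuinely different (though ultimately equivalent) route from the paper's. The paper phrases everything in the Gauss elimination language developed in Subsection~\ref{subsec:Gauss_elim}: it eliminates the pairs $(x,\tau x)$ along the degree-$0$ part of $\partial$ and then tracks which degree-$1$ components of the induced differential $f_{11}+f_{21}\circ f_{23}^{-1}\circ f_{13}$ survive on $\langle\mathrm{Sym}B\rangle$, arguing (rather tersely) that the surviving terms come from degree-$1$ parts of $f_{11}$ (the on-axis saddles, giving $X_s$) together with compositions of degree-$\frac12$ parts of $f_{21}$ and $f_{13}$ (pairs of off-axis saddles, giving $Y_s$). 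You instead run the classical zig-zag computation of the spectral sequence differential: split $\partial=\partial_0+\partial_{1/2}+\partial_1$, produce the canonical primitive $h=\sum_{c\in S}d_c(v)$ of $\partial_{1/2}(v)$ under $\partial_0=d_\tau$, and read off $\partial_\G^2(v)=\pi(\partial_1 v+\partial_{1/2}h)$. Both are standard mechanisms for the same underlying computation; the Gauss elimination framework is what the paper has set up and re-uses systematically (notably in the invariance proofs), whereas your zig-zag is self-contained and makes the formula $\partial_\G^2=\pi\circ\tqft_s$ more transparently explicit. Your degree bookkeeping checks out in all six crossing cases of Definition~\ref{defn:kasagrading}, the identity $\partial_0 h=\partial_{1/2}(v)$ is correct, and the identification of the $Y_s$-contribution (only the terms with $c'=\tau c$ survive $\pi$, since $s_{c,c'}$ is $\tau$-fixed precisely when $c'=\tau c$, the diagonal terms dying by $d_c^2=0$) is right.

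One small inaccuracy worth flagging: in the closing sentence you attribute the disappearance of cross-terms from off-axis crossings in distinct $\tau$-pairs to the cancellation $d_{c'}d_c+d_c d_{c'}=0$ in characteristic two. That mechanism is not what removes them --- those terms land in summands $V(s_{c,c'})$ whose smoothings are not $\tau$-fixed and hence contain no elements of $\mathrm{Sym}B$, so they are already annihilated by $\pi$, exactly as you correctly argued in the preceding sentence. The anticommutation of $d_c$ and $d_{c'}$ plays no role in the formula itself (it would only be relevant for a direct well-definedness argument when changing $S$, but well-definedness already follows from the abstract spectral sequence construction). This is a cosmetic slip, not a gap.
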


\begin{proof}
	We have performed Gauss elimination to remove all degree $0$ differentials on $\CKht^j(D)$, resulting in a complex with a basis identified with ${\rm Sym}B$.
	We need now to determine the degree $1$ part of the differential on $\langle {\rm Sym}B \rangle$ obtained by Gauss elimination.
	
	How the induced differential is obtained is explained in Lemma \ref{lem:Gausselimination}.  In the language of that lemma, the differential on the complex obtained via Gauss elimination is given by the formula $f_{11} + f_{21} \circ f_{23}^{-1} \circ f_{13}$.  Note that in our case, the original differential has the form $\partial = d + d_\tau$ where $d$ is the Khovanov differential and is of positive degree, while $d_\tau = {\rm id} + \tau$ is of zero degree.  As a consequence of our choice of pairs of elements to Gauss eliminate, in any Gauss elimination that we perform both $f_{21}$ and $f_{13}$ have positive degree.  Hence the degree 1 components of the differential following an elimination are those degree $1$ components of $f_{11}$ and the new components arising from pairs of degree $\frac{1}{2}$ components of $f_{21}$ and $f_{13}$.
	
	The degree $1$ components of the original differential which survive the Gauss eliminations give exactly the contribution to $\partial^2_\G$ corresponding to $\bigoplus_{r \in X_s} \tqft(r)$ in Definition \ref{def:KhovanovTQFT}.  The degree $1$ components arising from pairs of degree $\frac{1}{2}$ are those corresponding to $\bigoplus_{t \in Y_s} \tqft(t)$.
\end{proof}

\begin{remark}
	\label{rem:couture}
	Couture has given a bigraded invariant of a \emph{signed divide} \cite{Couture2009}.  A signed divide is another way of packaging the information of a strongly invertible link (an involutive link each of whose components intersect the fixed-point set of the involution).  Having set up a dictionary between signed divides and corresponding involutive diagrams, one may observe that Couture's invariant is exactly $E^3_\G$---which comes with the bigrading $(j,k)$---and is computed from a complex isomorphic to $E^2_\G$.  Understanding the relationship between Couture's invariant and the Khovanov cohomology of the underlying link provided some of the motivation for the current paper.
\end{remark}

\subsection{A concrete description of the second page of the spectral sequence.}
\label{subsec:concreteE2G}
The reader may appreciate a more concrete description of $E^2_\G$, formulated along the lines of the original description of the Khovanov cochain complex.  In this subsection we give such a description, extracted from Proposition \ref{prop:E2G_differential}.

Let $D$ be an involutive link diagram, with $m$ on-axis crossings and $2n$ off-axis crossings.  We are going to consider a $(m+n)$-dimensional cube $[0,1]^{m+n}$ of which each vertex is decorated by a $\tau$-equivariant smoothing of $D$ (notice that there are $2^{m+n}$ such smoothings).

Each on-axis crossing and each pair of off-axis crossings exchanged by $\tau$ corresponds to one of the coordinates of the cube.  All vertices of the cube with $i$th coordinate $0$ (respectively $1$) should be decorated with the smoothing of $D$ in which the $i$th crossing or pair of crossings are given the $0$-smoothing (respectively the $1$-smoothing).

The smoothings at two vertices that are connected by an edge of the cube are related either by a single surgery (if the coordinate changing along the edge corresponds to an on-axis crossing) or by a double surgery (if the coordinate changing corresponds to a pair of off-axis crossings).

Each vertex of the cube gives a cochain group summand with generators being all $\tau$-equivariant decorations of components of the smoothing at that vertex with either $v_+$ or $v_-$.  This can be thought of as a subspace of the vector space $\langle v_+, v_- \rangle^{\otimes \# {\rm components}}$.  In the familiar way, each cochain group is formed by taking the direct sum of all such summands whose coordinate-sum (a number between $0$ and $m+n$) is the same.  The $k$-grading of a cochain group is given by the coordinate sum plus a shift of $-(m_- + n_-/2 + m_0/2)$ where $m_-$ is the number of on-axis negative crossings with orientation reversed by $\tau$, $n_-$ is the number of off-axis negative crossings, and $m_0$ is the number of on-axis crossings with orientation preserved by $\tau$.

It remains to give the differential, which is formed as the direct sum of maps along each edge of the cube (in the direction of increasing $k$-grading).  Hence we can just give these maps along the edges.  Along each edge the maps are given by tensoring certain basic maps with the identity map on the tensor factors unaffected by the surgeries taking place along the edges of the cube.

We give these basic maps below.  Our notation is that we write $v_+$ or $v_-$ for the decorations of components of the smoothing that intersect the axis, while we write $v_{++}$ or $v_{--}$ for the decorations of pairs of components exchanged by $\tau$ (both decorated by $v_+$ or both by $v_-$).

\[
\begin{tikzpicture}
  \matrix[matrix of math nodes, row sep=1em, column sep=1em,draw=gray]
    {
    \raisebox{-13pt}{\includegraphics[scale=0.75]{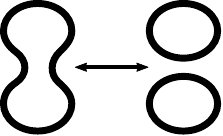}}
&
\begin{matrix*}[l]
v_+\mapsto v_+v_-+v_-v_+\\
v_-\mapsto v_-v_-
\end{matrix*}
&
\begin{matrix*}[l]
v_+\mapsfrom v_+v_+\\
v_-\mapsfrom v_+v_-,v_-v_+\\
0\phantom{_+}\mapsfrom v_-v_-
\end{matrix*}
\\
\raisebox{-6pt}{\includegraphics[scale=0.75]{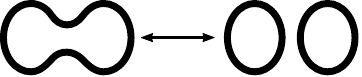}}
&
\begin{matrix*}[l]
v_+\mapsto 0\\
v_-\mapsto v_{--}
\end{matrix*}
&
\begin{matrix*}
v_+\mapsfrom v_{++}\\
0\phantom{_+}\mapsfrom v_{--}
\end{matrix*}
\\
\raisebox{-13pt}{\includegraphics[scale=0.75]{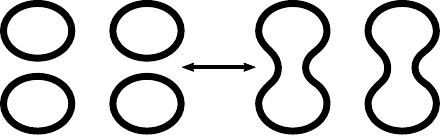}}
&
\begin{matrix*}[l]
v_{++}v_{++}\phantom{,v_{--}v_{++}}\mapsto v_{++}\\
v_{++}v_{--},v_{--}v_{++}\mapsto v_{--}\\
v_{--}v_{--}\phantom{,v_{--}v_{++}}\mapsto 0
\end{matrix*}
&
\begin{matrix*}[l]
v_{++}v_{--}+v_{--}v_{++}\mapsfrom v_{++}\\
v_{--}v_{--}\phantom{+v_{--}v_{++}\ }\mapsfrom v_{--}
\end{matrix*}
\\
\raisebox{-5pt}{\includegraphics[scale=0.75]{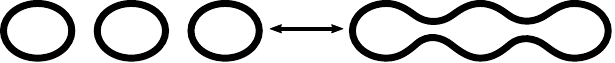}}
&
\begin{matrix*}[l]
v_{+}v_{++}\mapsto v_{+}\\
v_{-}v_{++}\mapsto v_{-}\\
v_{+}v_{--}\mapsto 0\\
v_{-}v_{--}\mapsto 0
\end{matrix*}
&
\begin{matrix*}[l]
v_{+}v_{--}\mapsfrom v_{+}\\
v_{-}v_{--}\mapsfrom v_{-}
\end{matrix*}
\\
\raisebox{-17pt}{\includegraphics[scale=0.75]{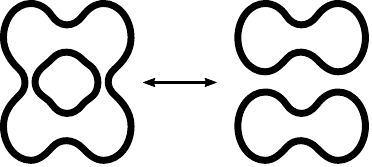}} 
& 
\begin{matrix*}[l]
v_{+}v_{+}\phantom{,v_{-}v_{+}}\mapsto v_{+}v_{-}+v_{-}v_{+}\\
v_{+}v_{-},v_{-}v_{+}\mapsto v_{-}v_- \\
v_-v_-\phantom{,v_{-}v_{+}}\mapsto 0
\end{matrix*}
& 
\text{(same)}\\
};
\end{tikzpicture}\]

\section{Invariance}\label{sec:inv}
In this section we give the proof of Theorem \ref{thm:triply-graded-invariant}, namely invariance of our construction under the involutive Reidemeister moves of Figure \ref{fig:moves}.  The proof of this theorem is split into five parts---Propositions \ref{prop:offaxisRmrmoves} through 
\ref{prop:M3}---in each case following the strategy laid out in Subsection \ref{subsec:outline_invariance_proof}.

The moves of Figure \ref{fig:moves} are grouped in three types, of which the most complicated for us will be the third type consisting of the moves M1, M2, and M3.  These are the \emph{mixed moves} involving both off-axis and on-axis crossings.  Indeed, the invariance under the first type (IR1, IR2, and IR3) \emph{off axis moves}, and the second type (R1, R2) \emph{on axis moves} is formally very similar to showing the invariance of Khovanov cohomology under the usual Reidemeister moves.  Therefore we allow ourselves to be somewhat brief when dealing with the first two kinds of move (Subsections \ref{subsec:offaxismoves} and \ref{subsec:onaxismoves}), reserving special attention for the mixed moves (Subsection \ref{subsec:mixedmoves}).

\subsection{Off axis moves}
\label{subsec:offaxismoves}
We begin with the moves IR1, IR2, and IR3 of Figure \ref{fig:moves}.


\begin{proposition}
	\label{prop:offaxisRmrmoves}
	Suppose for some $1 \leq i \leq 3$, that the involutive diagram $D$ differs from the involutive diagram $D'$ by ${\rm IR}i$.  Then the cohomologies $\Ht^j(D)$ and $\Ht^j(D')$ are bifiltered isomorphic.  Furthermore, there are isomorphisms of the induced spectral sequences starting with the $E^2_\F$ and with the $E^3_\G$ pages.
\end{proposition}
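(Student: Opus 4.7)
The plan is to apply the general strategy of Subsection \ref{subsec:outline_invariance_proof}. Since each move IR$i$ occurs in an off-axis region, the involutive condition forces the move to happen simultaneously in a pair of neighbourhoods exchanged by $\tau$; call them $U$ and $\tau(U)$. The classical Khovanov Reidemeister $i$ chain map $\phi$ is a local construction, so it can be applied in $U$ to produce a map $\phi_U$ and similarly $\phi_{\tau(U)} := \tau \circ \phi_U \circ \tau$ in the mirror neighbourhood. Since these two local maps have disjoint support they commute, and I would define
\[ f \co \CKht^j(D) \rightarrow \CKht^j(D') \]
as their composition (equivalently, the tensor product of the two local Reidemeister maps with the identity on the unaffected smoothings). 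By construction $f$ is $\tau$-equivariant and is a chain map for the Khovanov differential $d$, hence commutes with $d_\tau = \mathrm{id} + \tau$, and therefore commutes with $\partial$.

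Because $f$ is a chain map for $d$ it preserves the cohomological grading exactly, so it automatically preserves the $\F$-filtration. On the $E^2_\F$ page, which is ordinary Khovanov cohomology by construction, $f^2_\F$ is exactly the pair of standard Khovanov R$i$ isomorphisms applied on each side of the axis, and is therefore an isomorphism. By the criterion recalled in Subsection \ref{subsec:maps_of_spec_sequences}, this shows that $f^*$ is an $\F$-filtered isomorphism on $\Ht^j$.

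Next I would analyse the behaviour of $f$ on the $\G$-filtration. The $k$-grading of Definition \ref{defn:kasagrading} is defined via purely local contributions at each crossing, and off-axis crossings only contribute $0$ (oriented) or $\pm \tfrac{1}{2}$ (anti-oriented). In most cases the local Khovanov chain maps can be chosen so that $f$ respects the $k$-grading on the nose; in the remaining cases I would produce an explicit bifiltered homotopy $h$ and replace $f$ by $g = f + \partial h + h \partial$, as prescribed by the strategy. To verify that $g^3_\G \co E^3_\G(D) \to E^3_\G(D')$ is an isomorphism, I would use Proposition \ref{prop:E2G_differential}: because the move is confined to $\tau$-symmetric pairs of off-axis crossings, the double-surgery edges of the cube local to the move are precisely those of a single classical Reidemeister $i$ cube applied on the quotient, so the $E^2_\G$-level chain map mirrors a standard Khovanov Reidemeister chain map and its induced map on cohomology is an isomorphism.

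The main obstacle is the bookkeeping required to check, case by case in IR1, IR2, and IR3, that a suitable choice of Khovanov chain map (and, where necessary, a correcting homotopy $h$) yields $\G$-filtered preservation and the desired isomorphism on $E^3_\G$. As remarked at the start of Section \ref{sec:inv}, this is formally parallel to the classical proof of invariance of Khovanov cohomology under R1, R2, R3, so no genuinely new phenomena arise; the substantive difficulties are postponed to the mixed moves M1, M2, M3.
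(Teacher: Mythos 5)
Your approach matches the paper's: both define $f$ as the tensor product of the Bar-Natan Reidemeister-$i$ chain map with its mirror copy, observe that this is $\tau$-equivariant and hence commutes with $\partial = d + d_\tau$, deduce an $\F$-filtered isomorphism from the known Khovanov $E^2_\F$ isomorphism, and finish with a delooping/Gauss-elimination argument to show that the induced map on $E^2_\G$ has acyclic cone, hence is an isomorphism on $E^3_\G$. One small point to tighten: for the off-axis moves the map $f_i \otimes \overline{f_i}$ already preserves the $\G$-filtration on the nose (the two conjugate off-axis crossings each contribute $\pm\tfrac12$ to the $k$-grading, and the local Bar-Natan map paired with its mirror balances these), so no correcting homotopy $h$ is needed for IR1--IR3; the homotopy device is reserved for the mixed moves M1--M3.
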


The heart of the proof of Proposition \ref{prop:offaxisRmrmoves} is the invariance proof for the corresponding Reidemeister move, coupled with the explicit description for $E^2_\G$ given in Subsection \ref{subsec:concreteE2G}.  So there is ultimately little real work that needs to be done.  We therefore give a general explanation of the proof, but only illustrate it in figures for the IR1 move.

\begin{proof}[Proof of Proposition \ref{prop:offaxisRmrmoves}]
Bar-Natan has given explicit cochain maps $f_i$, defined at the level of tangles, which give homotopy equivalences between the Khovanov complexes of diagrams differing by a usual $i^{\text{th}}$ Reidemeister move \cite{bncob}.  See the left of Figure \ref{fig:R1} for a schematic of the map $f_1$ for a Reidemeister $1$ move.  On the right of Figure \ref{fig:R1}, we show the same map after \emph{delooping}---namely, replacing the circle component with two summands corresponding to its possible decorations.


\labellist
\small
\pinlabel {$f_1$} at 95 33
\pinlabel {$\oplus$} at 245 53.5
\tiny
\pinlabel {$\cong$} at 216 42.5 \pinlabel {$\cong$} at 268 32
	\endlabellist
\begin{figure}[ht]
\includegraphics[scale=0.75]{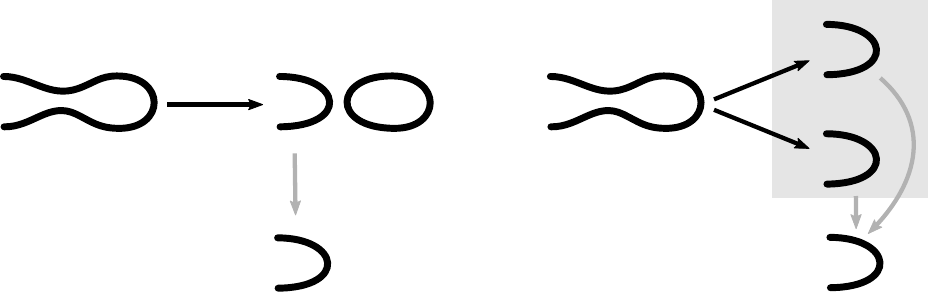}
\caption{The (non-involutive) Reidemeister 1 move.  This is the local picture of the cochain complex (left), and the result of {\it delooping} in the sense of Bar-Natan (right). Note that the homotopy equivalence provided by Bar-Natan is illustrated by the grey arrows.}\label{fig:R1}
\end{figure}

Suppose now that the involutive diagram $D$ differs from the involutive diagram $D'$ by IR$i$.  Working at the level of the local tangles, we take the tensor product of such a map $f_i$ together with the map $\overline{f_i}$ defined as the mirror image of $f_i$ to obtain a cochain map
\[ f_i \otimes \overline{f_i} \colon \CKh^j(D) \rightarrow \CKh^j(D') {\rm .}\]
By construction this map commutes with $d_\tau$ and so we obtain a cochain map
\[ f_i \otimes \overline{f_i} \colon \CKht^j(D) \rightarrow \CKht^j(D') {\rm .}\]
Furthermore, note that $f_i \otimes \overline{f_i}$ preserves both the $\F$ and $\G$ filtrations and induces the composition of two isomorphisms on $E_\F^2$ - the page isomorphic to the usual Khovanov cohomology.  Hence $f_i \otimes \overline{f_i}$ induces an $\F$-filtered isomorphism on cohomology $\Ht^j(D) \rightarrow \Ht^j(D')$.  To show that $f_i \otimes \overline{f_i}$ in fact induces a \emph{bilfiltered} isomorphism on cohomology, we shall see that the induced map
\[ (f_i \otimes \overline{f_i})^3_\G \colon E_\G^3(D) \rightarrow E_\G^3(D') \]
is an isomorphism.  This amounts to showing that the cone of the cochain map
\[(f_i \otimes \overline{f_i})^2_\G \colon E_\G^2(D) \rightarrow E_\G^2(D') \]
is acyclic.

The map $(f_i \otimes \overline{f_i})^2_\G$ is the associated graded map to the map induced by Gauss elimination along $d_\tau$.  This map is illustrated in Figure \ref{fig:IR1} for $i=1$.  The reader will notice the formal similarity to the map in Figure \ref{fig:R1}.  To push this formal similarity further, let us emphasize an important point, which may be gleaned from the concrete description of the $E_\G^2$ page given in Subsection~\ref{subsec:concreteE2G}.

\labellist
\small
\pinlabel {$f_1\otimes\overline{f_1}$} at 178 34
\pinlabel {$\oplus$} at 395 59
\tiny
\pinlabel {$\cong$} at 355 47 \pinlabel {$\cong$} at 424 25
	\endlabellist
\begin{figure}[ht]
\includegraphics[scale=0.75]{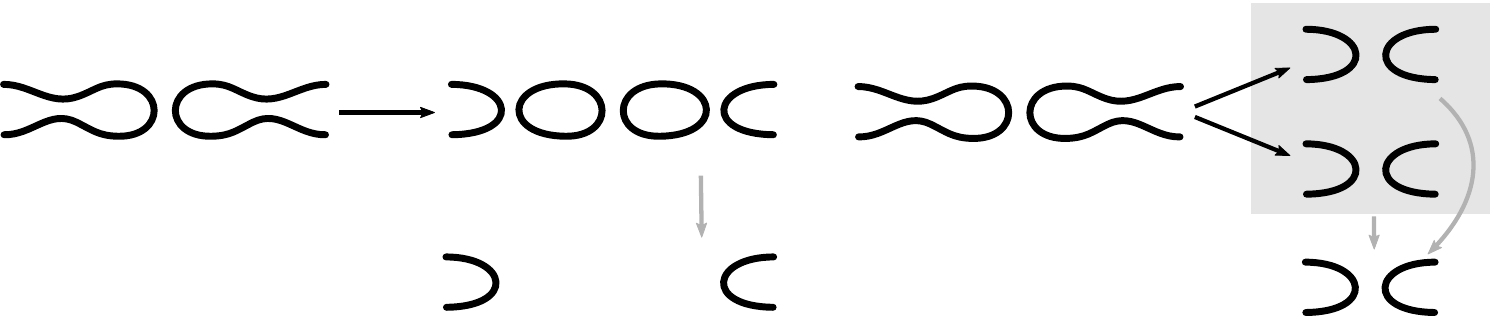}
\caption{Delooping the complex associated with an IR1 move.
	}\label{fig:IR1}
\end{figure}

The reader will notice that we may \emph{deloop} the pair of equivariant circles.  This is just rewriting the relevant cochain group summand as a sum of two vector spaces, one in which the equivariant pair receives the decoration $v_{++}$ and one in which it receives $v_{--}$.  Following delooping, certain components of the differential and of cochain map are seen to be isomorphisms.  Specifically, the double saddle map producing an equivariant circle pair gives such a component to the $v_{--}$ summand; the double saddle map killing an equivariant circle pair gives such a component from the $v_{++}$ summand; a double cup producing an equivariant circle pair gives an isomorphism to the $v_{++}$ summand; and finally a double cap killing an equivariant circle pair gives an isomorphism from the $v_{--}$ summand.

This should be compared with the usual notion of delooping in the Khovanov complex in which one removes a single circle; see the right of Figures \ref{fig:R1} and \ref{fig:IR1}.  Note in particular that there is a correspondence between the two figures when considering the components of the differential and the components of the cochain maps that become isomorphisms after delooping.

By performing successive Gauss elimination along components that are isomorphisms, one then sees that the cone of the cochain map $(f_i \otimes \overline{f_i})^2_\G$ is acyclic, and hence $(f_i \otimes \overline{f_i})^3_\G$ is an isomorphism, as required.
\end{proof}

\subsection{On axis moves}
\label{subsec:onaxismoves}
Now consider the moves R1 and R2 from Figure \ref{fig:moves}.

\begin{proposition}
	\label{prop:onaxisRmrmoves}
	Suppose  that the involutive diagram $D$ differs from the involutive diagram $D'$ by an on-axis Reidemeister move.  Then the cohomologies $\Ht^j(D)$ and $\Ht^j(D')$ are bifiltered isomorphic.  Furthermore, there are isomorphisms of the induced spectral sequences starting with the $E^2_\F$ and with the $E^3_\G$ pages.
\end{proposition}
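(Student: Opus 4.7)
The plan is to imitate the structure of the proof of Proposition \ref{prop:offaxisRmrmoves}, adapting to the on-axis setting where the local tangles involved in R1 and R2 are themselves $\tau$-equivariant. Because each local picture is $\tau$-invariant rather than being swapped with its mirror by $\tau$, there is no need to tensor with a mirrored Bar-Natan map: a single copy of Bar-Natan's $f_i$ for the corresponding ordinary Reidemeister move should do the job.

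First, I would check that Bar-Natan's locally defined cochain map $f_i$, applied in the $\tau$-invariant local region, commutes with the global involution $\tau$, and hence with $d_\tau = \id + \tau$. This promotes $f_i$ to a cochain map $f_i \colon \CKht^j(D) \to \CKht^j(D')$ with respect to the total differential $\partial = d + d_\tau$. Since $f_i$ preserves cohomological grading and induces the standard Khovanov invariance isomorphism on $E^2_\F$, it furnishes the required $\F$-filtered isomorphism on $\Ht^j$.

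Second, I would attempt to verify that $f_i$ is also $\G$-filtered by checking the local weight contributions at the new on-axis crossing(s) using Definition \ref{defn:kasagrading}, treating separately on-axis crossings of case (2) (orientations reversed by $\tau$) and case (3) (orientations preserved by $\tau$). If $f_i$ fails to respect $\G$ on the nose, I would follow the prescription of Subsection \ref{subsec:outline_invariance_proof} and modify it by a locally supported chain homotopy $h$, producing $g_i = f_i + \partial h + h\partial$, which induces the same map on $\Ht^j$ and which is chosen so that $g_i(\G^k \CKht^j(D)) \subseteq \G^k \CKht^j(D')$.

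Finally, to upgrade the $\F$-filtered isomorphism to a bifiltered one, I would verify that $(g_i)^3_\G$ is an isomorphism by means of the explicit description of $E^2_\G$ from Subsection \ref{subsec:concreteE2G}. After delooping the $\tau$-invariant circles in the local tangles, the cone of $(g_i)^2_\G$ will acquire isomorphism components coming from the saddle, cup, and cap cobordisms between paired $v_{++}$ and $v_{--}$ decorations, just as in Figure \ref{fig:IR1}; successive Gauss elimination along these components renders the cone acyclic, so that $(g_i)^3_\G$ is an isomorphism and $g_i$ is a bifiltered chain equivalence.

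The main obstacle will be the $\G$-filtration step, and especially the R1 move: the half-integer shifts contributed by on-axis case (3) crossings have to be reconciled with Bar-Natan's normalization, so one expects that the naive map $f_1$ will not be $\G$-filtered and that an explicit local homotopy $h$ must be constructed by hand. The R2 case should then reduce to analogous but longer bookkeeping, since one pair of on-axis crossings of opposite sign is involved and the cancellations between their weight contributions simplify the analysis.
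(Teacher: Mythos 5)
Your overall strategy matches the paper's proof: take a single Bar-Natan map $f$ for the corresponding ordinary Reidemeister move (with no tensoring against a mirrored copy, since the local tangle is $\tau$-invariant), check that $f$ commutes with $\tau$ to upgrade it to a cochain map for $\partial = d + d_\tau$, note it induces the Khovanov isomorphism on $E^2_\F$, and then establish the $E^3_\G$ isomorphism by delooping the $\tau$-equivariant circle and Gauss-eliminating. So far this is exactly the paper's argument.

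The one place you deviate — and where your prediction is incorrect — is the final paragraph. The paper's proof simply observes that the on-axis Bar-Natan map already preserves \emph{both} the $\F$- and $\G$-filtrations; no chain homotopy correction $h$ is needed for R1 or R2. The homotopy-correction machinery of Subsection \ref{subsec:outline_invariance_proof} is deployed only for the mixed moves M1, M2, M3, where the first problem is that $f_0$ fails even to commute with $d_\tau$. Moreover, your specific worry about case-(3) half-integer shifts for R1 is moot: an on-axis kink crossing is necessarily fixed by $\tau$, and the reflection swaps the two arc-ends on each side of the axis — each such pair consisting of one incoming and one outgoing arc of the same component — so the local orientations are reversed and only case (2) of Definition \ref{defn:kasagrading} can occur for an on-axis R1 crossing. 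Case (3) can arise for the pair of R2 crossings, but the poke introduces one positive and one negative crossing whose weight contributions cancel on the oriented resolution, and a direct local-weight check shows the Bar-Natan map still preserves $\G$. If you run your cautious algorithm you will simply discover that $h=0$ works; the anticipated obstacle is not there, and spending effort hunting for a local homotopy would be wasted.
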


\begin{proof}
	There is a Bar-Natan cochain map
	\[ f \colon \CKh^j(D) \rightarrow \CKh^j(D') \]
	which induces an isomorphism on Khovanov cohomology.  Note that this map commutes with $\tau$ and hence gives a cochain map
	\[ f \colon \CKht^j(D) \rightarrow \CKht^j(D') {\rm .}\]
	Observe also that this map preserves both the $\F$ and $\G$ filtrations.  Now, by construction, $f$ induces a Khovanov isomorphism on $E^2_\F$, hence $f$ induces an $\F$-filtered isomorphism on cohomology $\Ht^j(D) \rightarrow \Ht^j(D')$.
	
	Similarly to the off axis case, we then wish to show that the cone of the map
	\[ f^2_\G \colon E_\G^2(D) \rightarrow E_\G^2(D') \]
	is acyclic.  Again this is achieved by Gauss elimination, this time following the delooping of the $\tau$-equivariant circle appearing in one of the cochain complexes.
\end{proof}

\subsection{Mixed moves.}
\label{subsec:mixedmoves}
The mixed moves M1, M2, and M3 present complications that are not present in the other moves.  For each move we shall follow carefully the strategy outlined in Subsection \ref{subsec:outline_invariance_proof} in which we construct two cochain maps $f$ and $g$ related by a chain homotopy $h$.  Before we begin, we should discuss how we represent these maps.  Each of the maps $f$, $g$, and $h$ is presented as a matrix of maps between vectors of tangles.  There is presumably a local involutive theory along the lines of Bar-Natan's work \cite{bncob} which makes rigorous sense of this, although we do not develop it in this paper.

For our purposes, each entry of the matrix will be the vector space map between summands of the complexes of the closed diagrams induced by a cobordism between the tangles (extended by the identity cobordism outside the tangle).  The one wrinkle is that we sometimes wish to precompose these maps with $\tau$ (see Figure \ref{fig:mixed-R3-with-f} for an example).

Most of the time, the tangle cobordisms in the non-zero matrix entries are uniquely determined by the requirement that they preserve the quantum degree---this explains, for example, why only one arrow in Figure \ref{fig:mixed-R3-with-g} is decorated with a cobordism.

\begin{proposition}
	\label{prop:M1}
	Suppose  that the involutive diagram $D'$ differs from the involutive diagram $D$ by the move M1.  Then the cohomologies $\Ht^j(D)$ and $\Ht^j(D')$ are bifiltered isomorphic.  Furthermore, there are isomorphisms of the induced spectral sequences starting with the $E^2_\F$ and with the $E^3_\G$ pages.
\end{proposition}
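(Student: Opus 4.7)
The plan is to follow the template of Subsection \ref{subsec:outline_invariance_proof}. The move M1 relates two involutive diagrams $D$ and $D'$ whose underlying (non-involutive) link types agree, so there is a short sequence of ordinary Reidemeister moves between them. Composing the corresponding Bar-Natan cochain maps yields a Khovanov homotopy equivalence
\[ f_0 \co \CKh^j(D) \to \CKh^j(D'). \]
I would assemble a cochain map $f$ with respect to the total differential $\partial = d + d_\tau$ as a matrix of tangle-level cobordism maps (possibly with entries precomposed with $\tau$, as in the discussion preceding the proposition), arranged so that the $d$-graded component is $f_0$. By construction $f$ is $\F$-filtered, and the induced map $f^2_\F$ is $f_0$, hence an isomorphism on $E^2_\F$. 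By the criterion in Subsection \ref{subsec:maps_of_spec_sequences}, $f^*$ is already an $\F$-filtered isomorphism $\Ht^j(D) \to \Ht^j(D')$.

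Next I would identify where $f$ fails to preserve the $\G$-filtration. Since $f_0$ is built from standard Reidemeister cobordisms and the $\tau$-precomposition shifts $k$-grading in a controlled way, the offending components are localised and of known $k$-degree. For each such component I would construct a local chain homotopy $h$ supported on the small tangle involved in M1, so that $g = f + \partial h + h \partial$ preserves $\G$. The choice of $h$ is constrained: its own matrix entries must be of a $k$-degree high enough that $\partial h + h \partial$ cancels the $\G$-violating part of $f$ without introducing fresh violations at a different level. Because $g$ and $f$ differ by a boundary, $g^* = f^*$, so $g^*$ is automatically $\F$-filtered and an isomorphism.

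Having obtained a $\G$-filtered $g$, to conclude that $g^*$ is a \emph{bifiltered} isomorphism it suffices by the three-way equivalence recalled in Subsection \ref{subsec:maps_of_spec_sequences} to check that $g^3_\G$ is an isomorphism, equivalently that the mapping cone of
\[ g^2_\G \co E^2_\G(D) \to E^2_\G(D') \]
is acyclic. Using the concrete description of $E^2_\G$ from Subsection \ref{subsec:concreteE2G}, I would deloop the $\tau$-equivariant circle pairs produced by the relevant double surgeries and then perform successive Gauss elimination along the components of the differential and of $g^2_\G$ that become isomorphisms after delooping, exactly as in the IR$i$ case treated in Proposition \ref{prop:offaxisRmrmoves}. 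This leaves a trivial complex, proving acyclicity.

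The main obstacle is pinning down the correct homotopy $h$. Because M1 mixes on-axis and off-axis saddles, the local cobordism maps comprising $f$ have several potentially $\G$-violating components, and $h$ must cancel all of them simultaneously while respecting the $\F$-filtration and remaining consistent with $\tau$-equivariance. I expect this to be a careful but entirely diagrammatic calculation, considerably lighter than the analogous step for M3 where the relevant $\F$-filtered map is a perturbation of a composite of ten Reidemeister-level maps on a six-crossing tangle.
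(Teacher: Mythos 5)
Your proposal follows essentially the same route as the paper: construct a perturbed Bar--Natan map $f = f_0 + f_1$ (for M1, $f_0$ is the single Reidemeister~3 chain homotopy equivalence, corrected by an $i$-degree-lowering term $f_1$ that compensates $f_0\tau + \tau f_0$ so that $f$ commutes with $\partial$), observe $f^2_\F$ is a Khovanov isomorphism, then pass to $g = f + \partial h + h\partial$ with an explicit homotopy $h$ chosen to make $g$ $\G$-filtered, and finally show the cone of $g^2_\G$ is acyclic by delooping the $\tau$-equivariant circle pairs on the $E^2_\G$ page and performing Gauss elimination. This matches the paper's proof in strategy and all its key steps.
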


\begin{proof}
Recall that M1 is the move relating two involutive diagrams $D$, $D'$ which differ locally as shown towards the left of Figure \ref{fig:mixed-R3-with-f}.

Our strategy is to start by constructing a cochain map $f=f_0+f_1 \colon \CKht^j(D) \rightarrow \CKht^j(D')$.  The map $f_0$ is Bar-Natan's cochain map (so commuting with $d$), which exhibits invariance under Reidemeister 3 in Khovanov cohomology, and appears as the solid arrows preserving the $i$-degree in Figure \ref{fig:mixed-R3-with-f} (compare \cite[Figure 8]{bncob}).

\begin{figure}[ht]
	\labellist
	\small
	\pinlabel {$\tau$} at 336 162
	\pinlabel {$\tau$} at 379 162
	\pinlabel {$\tau$} at 405 162
	\endlabellist
	\includegraphics[scale=0.55]{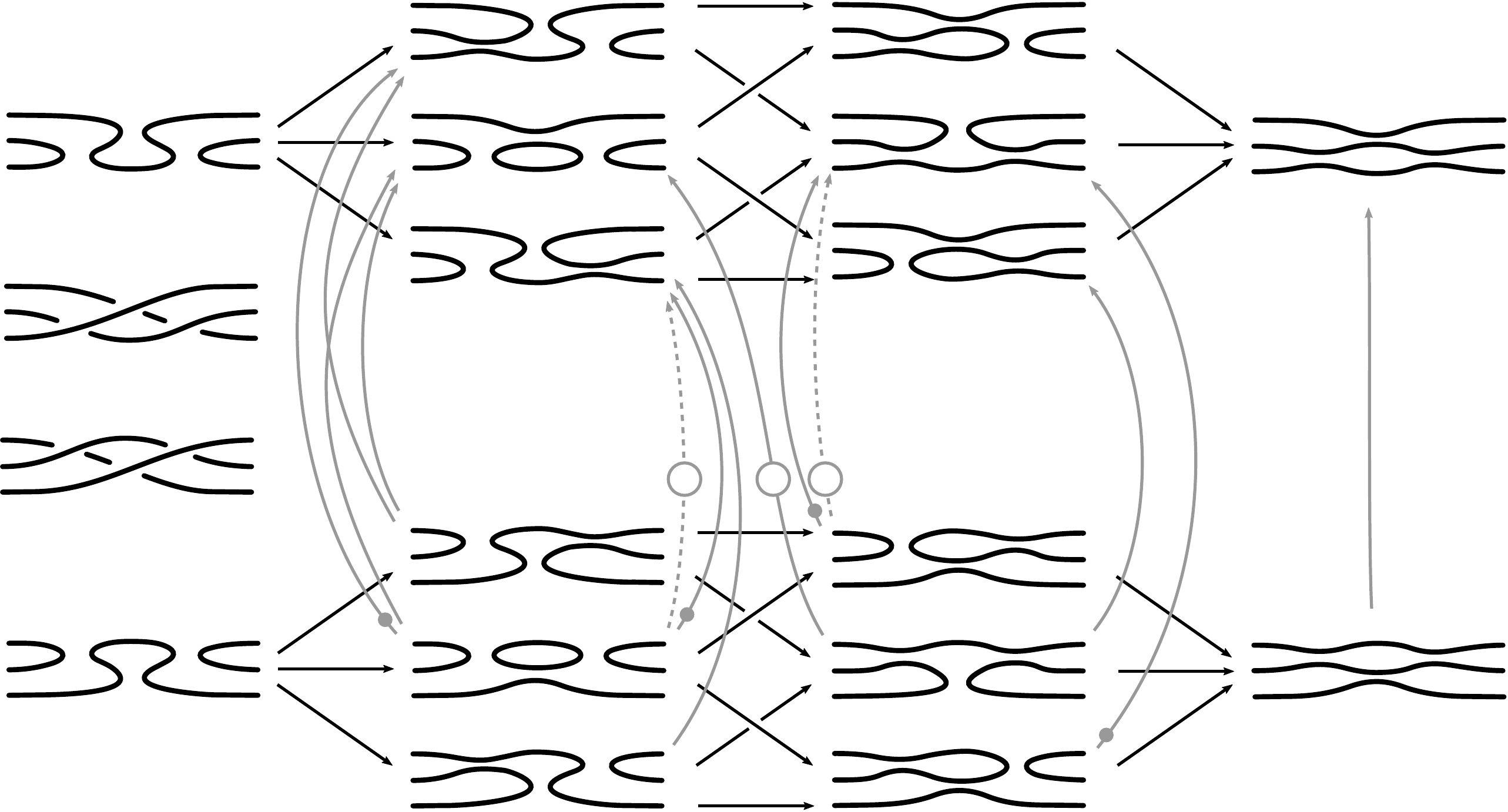}
	\caption{On the bottom is $\CKht^j(D)$ and on the top $\CKht^j(D')$.  We indicate the map $f=f_0+f_1$ by solid arrows and the homotopy $h$ by dashed arrows.  The presence of a circled $\tau$ on an arrow means that the cobordism has been precomposed with $\tau$. The map $f_0$ consists of those solid arrows preserving the $i$-degree, $f_1$ is the single solid arrow lowering the $i$ degree.  The failure of $f_0$ to commute with the total differential $\partial$ is illustrated in Figure \ref{fig:mixed-R3-failure}. Finally, the arrows labeled with a dot indicate those components of $f$ that lower the $\mathcal{G}$-filtration.  Hence $f$ is not a $\mathcal{G}$-filtered chain map.}\label{fig:mixed-R3-with-f}
\end{figure}

\begin{figure}[ht]
	\labellist
	\small
	\pinlabel {$\tau$} at 152 134 
	\pinlabel {$\tau$} at 300 134
	\pinlabel {$\tau$} at 462 134
	\pinlabel {$\tau$} at 480 134
	\endlabellist
	\includegraphics[scale=0.68]{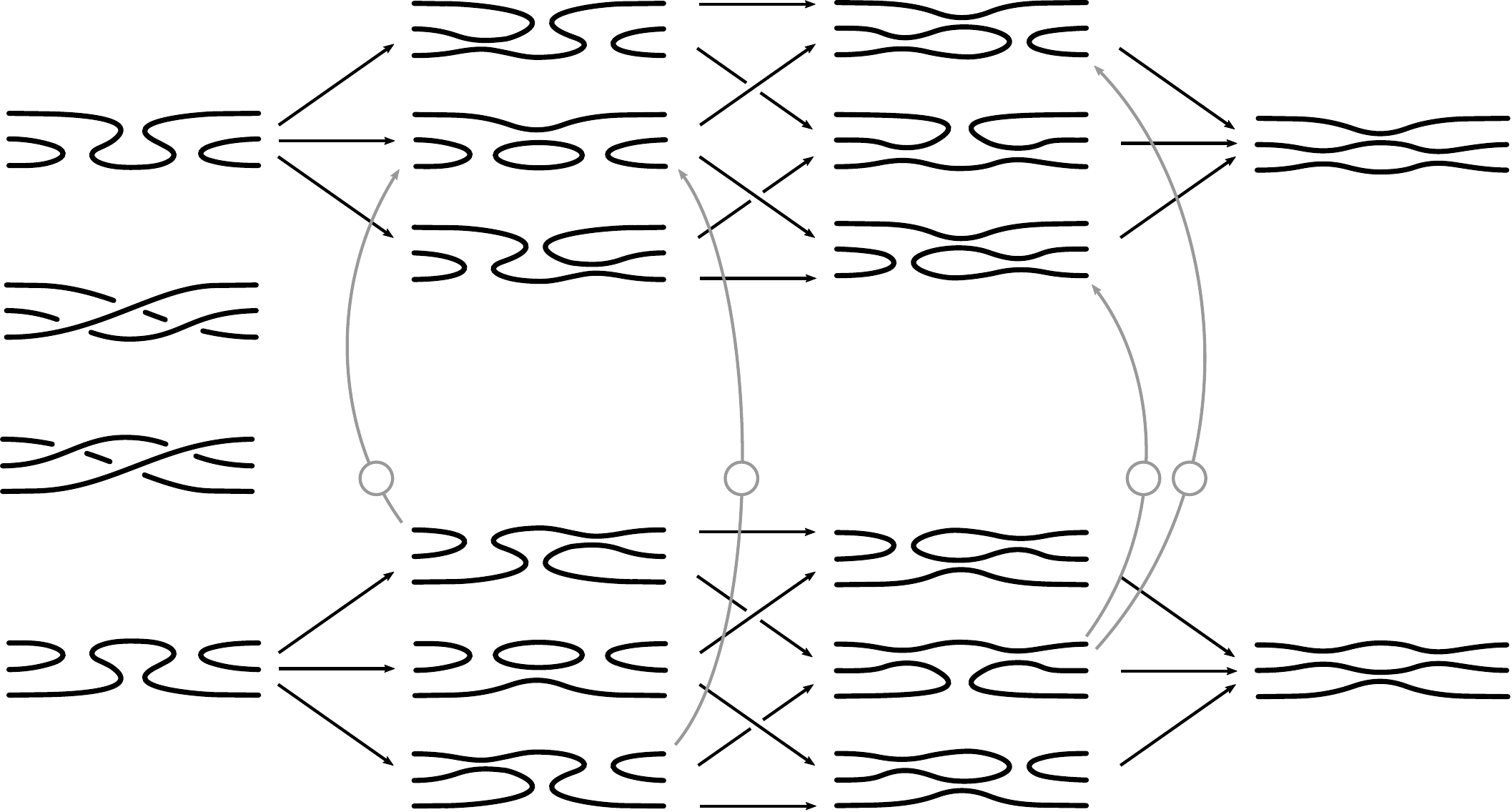}
	\caption{The failure of $f_0$ to commute with the total differential $\partial$ is measured by $f_0\tau+\tau f_0$, indicated here by the solid arrows. 
		}\label{fig:mixed-R3-failure}
\end{figure}

\begin{figure}[ht]
	\labellist
	\small
	\pinlabel {$\tau$} at 150 162
	\pinlabel {$\tau$} at 322 162
	\pinlabel {$\tau$} at 343.5 162
	\pinlabel {$\tau$} at 363 162
	\pinlabel {$\tau$} at 382 162
\tiny
\pinlabel {$\frac{1}{2}$} at 295 100  \pinlabel {$\frac{3}{2}$} at 500 100
 	\pinlabel {$0$} at 100 47 \pinlabel {$1$} at 295 47  \pinlabel {$1$} at 500 47  \pinlabel {$2$} at 710 47
	\pinlabel {$\frac{1}{2}$} at 295 -9  \pinlabel {$\frac{3}{2}$} at 500 -9
	\pinlabel {$\frac{1}{2}$} at 295 360  \pinlabel {$\frac{3}{2}$} at 500 360
 	\pinlabel {$0$} at 100 305 \pinlabel {$1$} at 295 305 \pinlabel {$1$} at 500 305  \pinlabel {$2$} at 710 305
	\pinlabel {$\frac{1}{2}$} at 295 250  \pinlabel {$\frac{3}{2}$} at 500 250
	\endlabellist
	\includegraphics[scale=0.55]{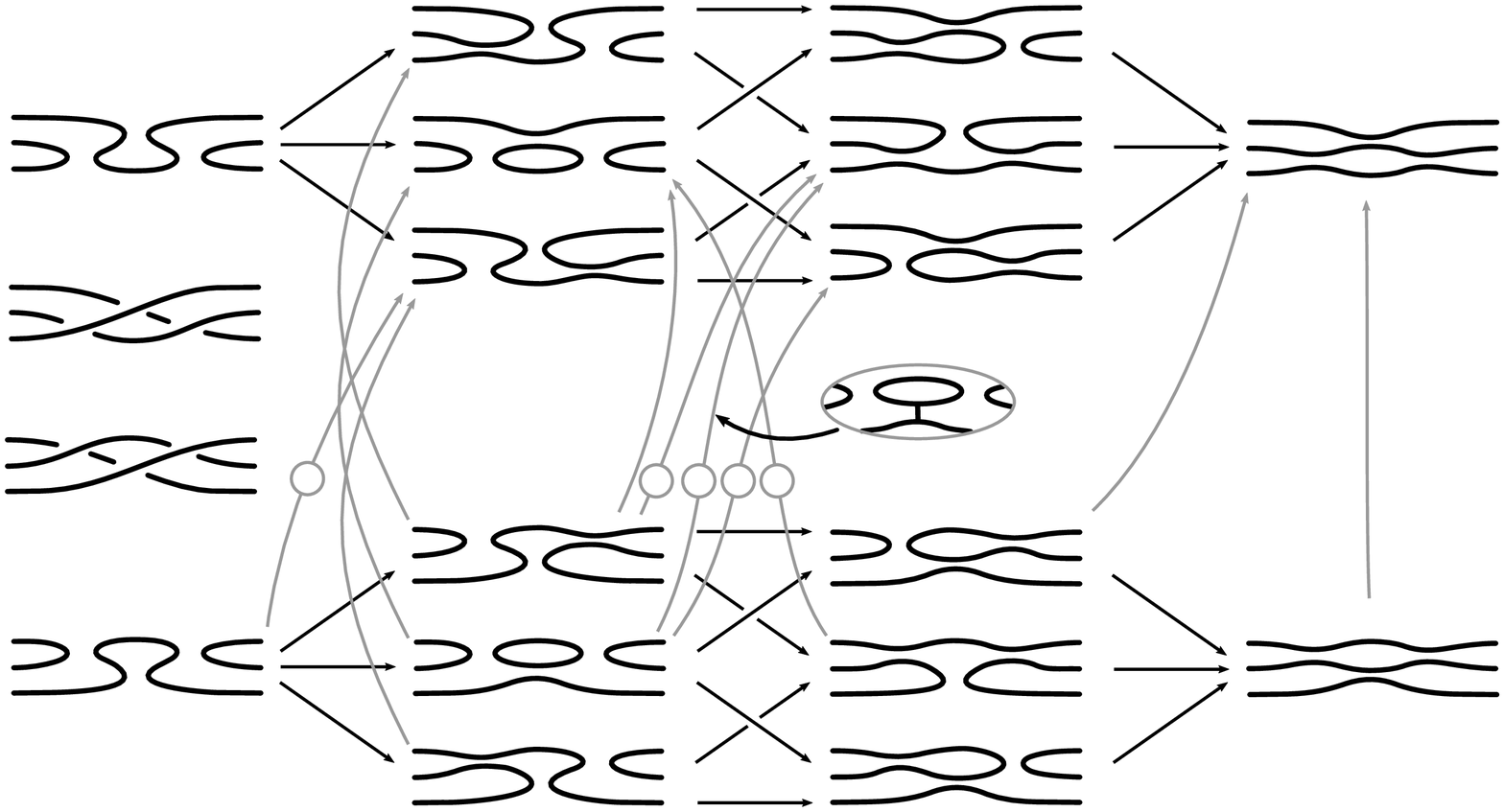}
	\caption{The map $g=f + \partial h + h \partial$ is $\mathcal{G}$-filtered (relative filtration levels are indicated).}\label{fig:mixed-R3-with-g}
\end{figure}

Although $f_0$ commutes with the Khovanov differential $d$, it does not commute with the total differential $\partial = d + d_\tau$.  This failure of commutativity is measured by
\[ f_0 \partial + \partial f_0 = f_0 d_\tau + d_\tau f_0 = f_0 \tau + \tau f_0 \colon \CKht^j(D) \rightarrow \CKht^j(D') {\rm ,}\]
and this is illustrated in Figure \ref{fig:mixed-R3-failure}.

The map $f_1 \colon \CKh^j(D) \rightarrow \CKh^j(D')$ is indicated in Figure \ref{fig:mixed-R3-with-f} by the single solid arrow that lowers the $i$-degree by $1$.  It can then be observed that
\[ f_1 \partial + \partial f_1 = f_1 d + d f_1 = f_0 \tau + \tau f_0 \colon \CKh^j(D) \rightarrow \CKh^j(D') {\rm ,}\]
or, in other words, that $f = f_0 + f_1$ commutes with $\partial$.

Hence we have constructed a cochain map $f$ which respects the $\F$ filtration.  We note that the $\F$-filtration preserving part of $f$ is exactly $f_0$, which is Bar-Natan's cochain map giving an isomorphism on the Khovanov cohomology.  Therefore $f$ induces an isomorphism $f^2_\F \co E^2_\F(D) \rightarrow E^2_\F(D')$ (namely, an isomorphism on the usual Khovanov cohomology).


After constructing $g=f+ \partial h + h \partial$ using the homotopy $h$ shown in Figure \ref{fig:mixed-R3-with-f} by dashed arrows, we obtain a $\mathcal{G}$-filtered chain map as shown in Figure  \ref{fig:mixed-R3-with-g}. It remains to verify that $g$ induces an isomorphism on the $E^3_{\mathcal{G}}$ page of the spectral sequence.  This is equivalent to showing that the cone of the cochain map $g^2_\G \co E^2_\G(D) \rightarrow E^2_\G(D')$ is acyclic.  We give a schematic of $g^2_\G$ in Figure \ref{fig:mixed-R3-iso}.


 \labellist
 \small
 \pinlabel {$\cong$} at 225 165 \pinlabel {$\cong$} at 314 165 \pinlabel {$\cong$} at 392 165 \pinlabel {$\cong$} at 135 261
 \pinlabel {$\cong$} at 135 70
 \tiny
 \pinlabel {$+$} at 232 285 \pinlabel {$-$} at 232 242
 \pinlabel {$+$} at 232 120 \pinlabel {$-$} at 232 78
 \endlabellist
 \begin{figure}[h]
 	\includegraphics[scale=0.70]{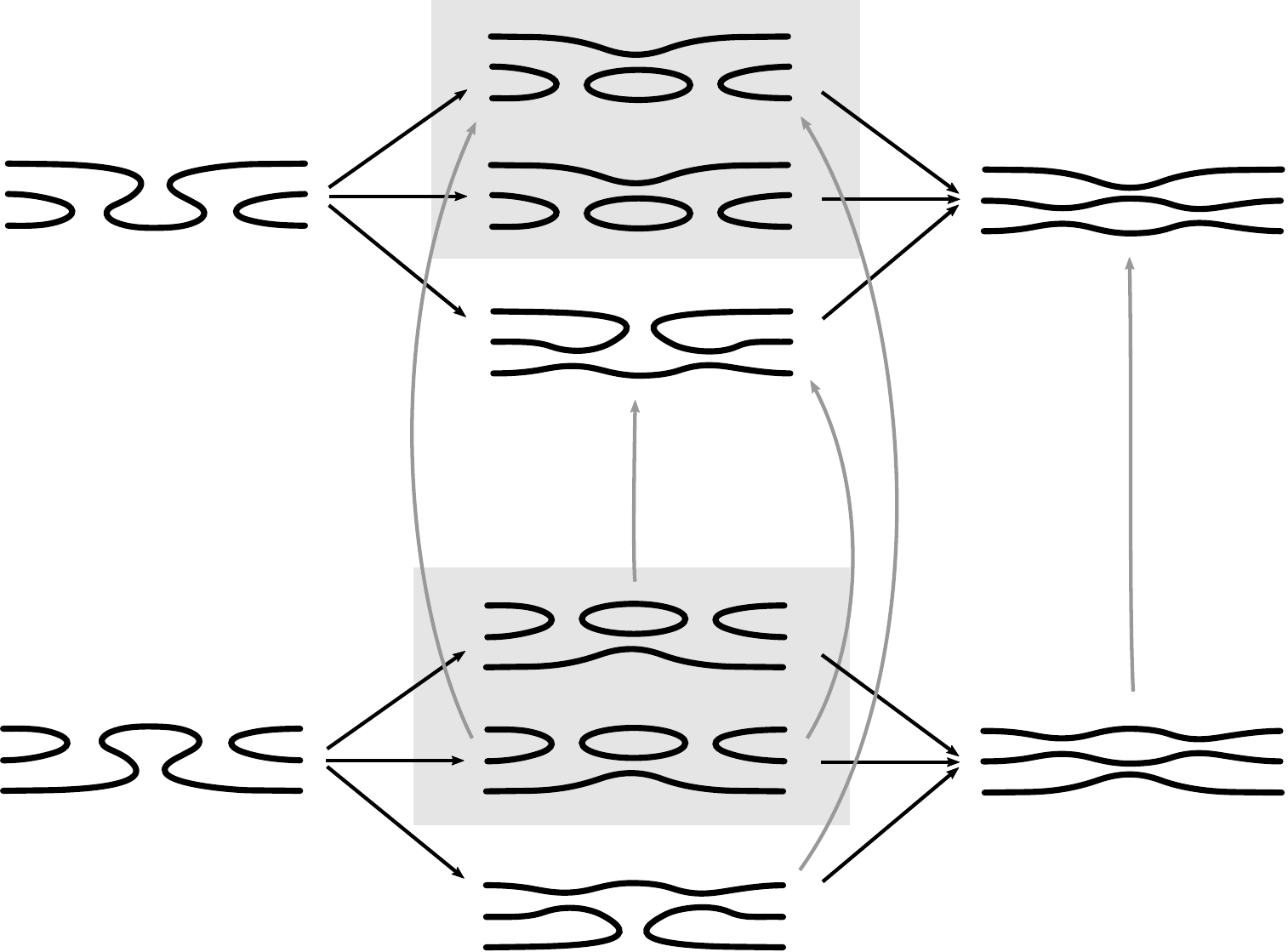}
 	\caption{Here we show the cochain map $g^2_\G \co E^2_\G(D) \rightarrow E^2_\G(D')$ for the move M1.  We use the description of the $E_\G^2$ page given in Subsection \ref{subsec:concreteE2G} and project to the coordinates (of the cube of all $\tau$-equivariant smoothings) corresponding to the tangle crossings.}\label{fig:mixed-R3-iso}
 \end{figure}

In this figure we are representing the complexes $E_\G^2(D)$ and $E_\G^2(D')$ using the description given in Subsection \ref{subsec:concreteE2G} (in which basis elements of the complex correspond to $\tau$-equivariant decorations of $\tau$-equivariant smoothings).  We have delooped the two $\tau$-equivariant circles (replacing each with the summands corresponding to the two possible decorations), and indicated which components of the differential and of the cochain map are necessarily isomorphisms.  The will readily verify that successive Gauss elimination along those components that are isomorphisms results in the zero complex.
\end{proof}

\begin{proposition}
	\label{prop:M2}
	Suppose  that the involutive diagram $D$ differs from the involutive diagram $D'$ by the move M2.  Then the cohomologies $\Ht^j(D)$ and $\Ht^j(D')$ are bifiltered isomorphic.  Furthermore, there are isomorphisms of the induced spectral sequences starting with the $E^2_\F$ and with the $E^3_\G$ pages.
\end{proposition}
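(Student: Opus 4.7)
The plan is to follow the general strategy laid out in Subsection \ref{subsec:outline_invariance_proof}, closely paralleling the proof of Proposition \ref{prop:M1}. Concretely, I would first construct an $\F$-filtered cochain map $f \co \CKht^j(D) \to \CKht^j(D')$ inducing an isomorphism $f^2_\F$ on the $E^2_\F$ page (and hence an $\F$-filtered isomorphism $\Ht^j(D) \to \Ht^j(D')$), and then modify $f$ by a chain homotopy to obtain a second cochain map $g = f + \partial h + h\partial$ which additionally respects the $\G$-filtration and induces an isomorphism $g^3_\G$ on the $E^3_\G$ page. Together with the equivalences discussed in Subsection \ref{subsec:maps_of_spec_sequences}, this produces the desired bifiltered isomorphism on $\Ht^j$.

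For the construction of $f$, I would start from a composition of Bar-Natan cobordism maps realizing M2 as a sequence of standard Reidemeister moves, combined with its mirror exactly as in the proof of Proposition \ref{prop:offaxisRmrmoves}, to obtain an $f_0$ that commutes with the Khovanov differential $d$ and is $\F$-filtered. By construction $f_0$ induces the usual Khovanov isomorphism on $E^2_\F$. However, because M2 involves on-axis crossings, $f_0$ will not generally commute with $d_\tau = \id + \tau$; its defect $f_0 \tau + \tau f_0$ is supported on a small explicit set of smoothings of the M2 tangle and is strictly lower in cohomological degree. I would then add a correction term $f_1$ satisfying $f_1 d + d f_1 = f_0 \tau + \tau f_0$, chosen by a direct case analysis at the level of the local tangle, so that $f = f_0 + f_1$ commutes with $\partial$. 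Because the $\F$-filtration-preserving part of $f$ is just $f_0$, the induced map on $E^2_\F$ is unchanged.

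The main obstacle will be producing a chain homotopy $h$ such that $g = f + \partial h + h\partial$ is $\G$-filtered. The difficulty is that generally some matrix entries of $f$ strictly decrease the $\G$-weight (as happens for M1 and is indicated by the dotted arrows in Figure \ref{fig:mixed-R3-with-f}), and one has to find an $h$ whose commutator with $\partial$ cancels exactly these bad components without creating new ones. Using the weight formula of Definition \ref{defn:kasagrading}, I would enumerate the summands of $\CKht^j(D)$ and $\CKht^j(D')$ by $\G$-weight, identify the offending matrix entries of $f$, and guess-and-check a local $h$ supported near the M2 tangle. Since $g$ is chain homotopic to $f$, it induces the same $\F$-filtered isomorphism on $\Ht^j$.

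Finally, to verify that $g^3_\G$ is an isomorphism it suffices to show that the cone of $g^2_\G \co E^2_\G(D) \to E^2_\G(D')$ is acyclic. I would use the concrete description of the $E^2_\G$ page from Subsection \ref{subsec:concreteE2G} and deloop all $\tau$-equivariant circles created by the M2 tangle, writing out the analogue of Figure \ref{fig:mixed-R3-iso}. Several components of the induced differential and of $g^2_\G$ will then be visibly isomorphisms, and successive Gauss elimination along these components (Lemma \ref{lem:Gausselimination}) will collapse the cone to the zero complex. The bookkeeping is lighter than for M1 owing to the smaller number of crossings and smoothings involved, so once the correct $h$ is in hand the acyclicity check should be essentially mechanical.
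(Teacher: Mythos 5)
Your proposal follows the paper's strategy essentially exactly: construct an $\F$-filtered cochain map $f = f_0 + f_1$ whose $\F$-preserving part $f_0$ induces the usual Khovanov isomorphism on $E^2_\F$, then modify by a homotopy $h$ to obtain a $\G$-filtered $g = f + \partial h + h\partial$, and finally verify $g^3_\G$ is an isomorphism by delooping $\tau$-equivariant circles on the $E^2_\G$ page and cancelling the cone of $g^2_\G$ by Gauss elimination. This is precisely what the paper does for M2.

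One small but real inaccuracy: you say $f_0$ should be built ``combined with its mirror exactly as in the proof of Proposition \ref{prop:offaxisRmrmoves}.'' That tensoring trick $f_i \otimes \overline{f_i}$ is specific to the off-axis moves IR1--IR3, where the Reidemeister move happens simultaneously on both sides of the axis. For the mixed moves M1--M3, no such tensoring occurs; for M2, $f_0$ is simply Bar-Natan's chain homotopy equivalence for a single (ordinary) Reidemeister 2 move, and the on-axis crossing is already part of the local picture. Tensoring with a mirror copy would produce a map between the wrong pair of tangles. The rest of your argument is unaffected, since once $f_0$ is correctly identified, the defect $f_0\tau + \tau f_0$, the corrector $f_1$, and the homotopy $h$ all proceed as you describe.
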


\begin{proof}
For the move M2, we follow the same procedure as for M1.  Let us turn first to the cochain map
\[ f = f_0 + f_1 \co \CKht^j(D) \rightarrow \CKht^j(D')\]
that is shown in Figure \ref{fig:mixed-R2-with-f}.  The map $f_0$ commutes with $d$ and is a Khovanov chain homotopy equivalence for the usual Reidemeister 2 move.  The map $f = f_0 + f_1$ commutes with the total differential $\partial$ and preserves the $\F$-filtration.  Since $f_1$ strictly lowers the $i$-degree, $f$ induces a Khovanov isomorphism $f^2_\F \co E^2_\F(D) \rightarrow E^2_\F(D')$.

\begin{figure}[ht]
		\labellist
	\small
	\pinlabel {$\tau$} at 358 54
	\pinlabel {$\tau$} at 386 54
	\pinlabel {$\tau$} at 475 54
	\endlabellist
	\includegraphics[scale=0.55]{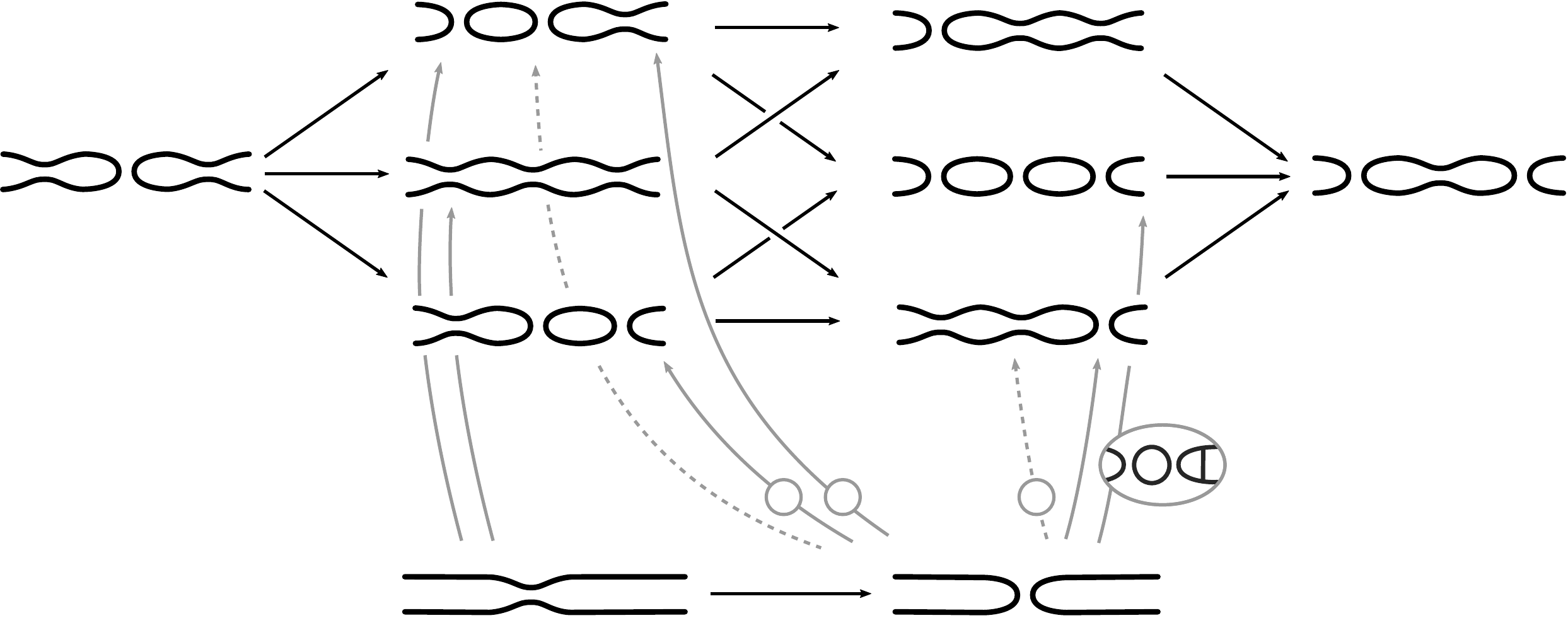}
	\caption{The map $f$ (solid lines) and a homotopy $h$ (dashed lines).  The components of $f$ that are vertical (so preserving $i$-degree) comprise the Khovanov chain homotopy equivalence $f_0$ for the Reidemeister 2 move.}\label{fig:mixed-R2-with-f}

\end{figure}

\begin{figure}[ht]
		\labellist
	\small
	\pinlabel {$\tau$} at 359 54
	\pinlabel {$\tau$} at 514 54
	\endlabellist
	\includegraphics[scale=0.55]{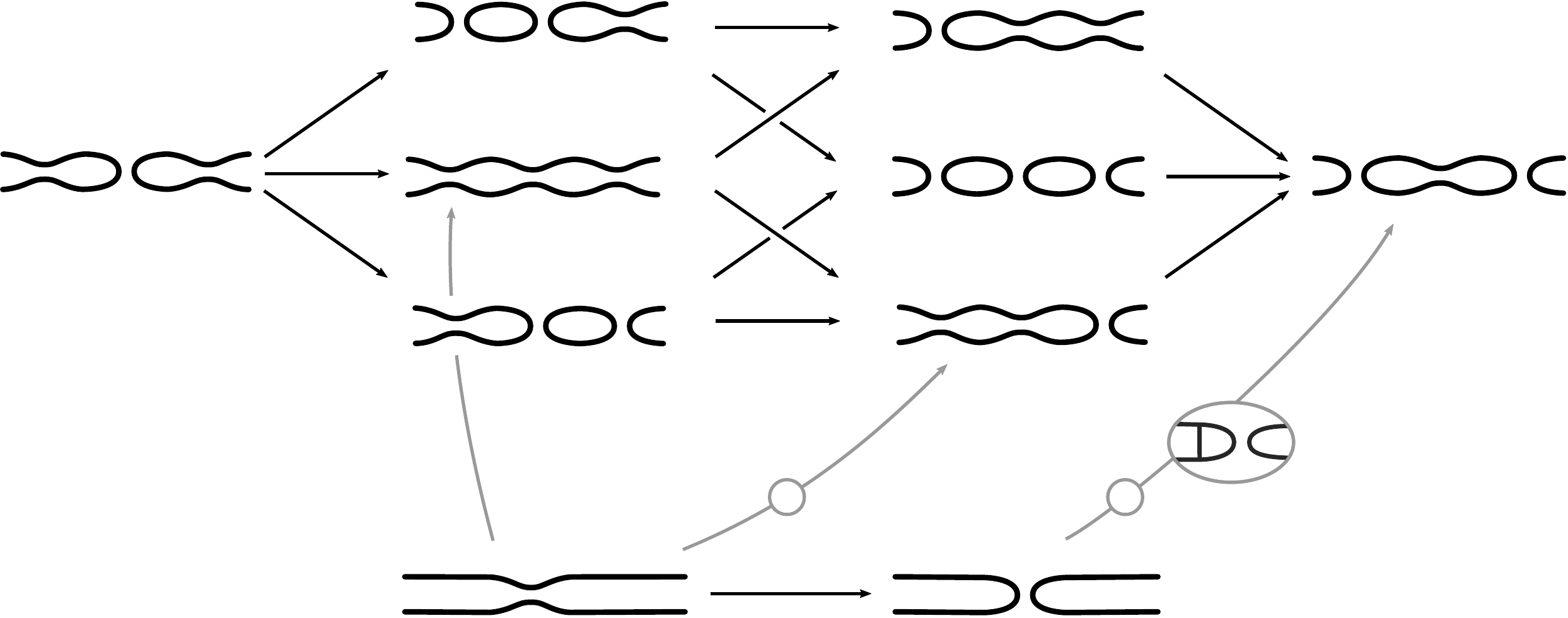}
	\caption{The map $g=f + \partial h + h \partial$.}\label{fig:mixed-R2-with-g}
\end{figure}


 \labellist
 \small
 \pinlabel {$\cong$} at 135 170 \pinlabel {$\cong$} at 302 211 
 \pinlabel {$\cong$} at 202 70 \pinlabel {$\cong$} at 376 70
 \tiny
 \pinlabel {$+$} at 240 240 \pinlabel {$-$} at 240 181
  \pinlabel {$+$} at 190 240 \pinlabel {$-$} at 190 181
   \pinlabel {$+$} at 410 196 \pinlabel {$-$} at 410 136
 \endlabellist
 \begin{figure}[h]
 	\includegraphics[scale=0.70]{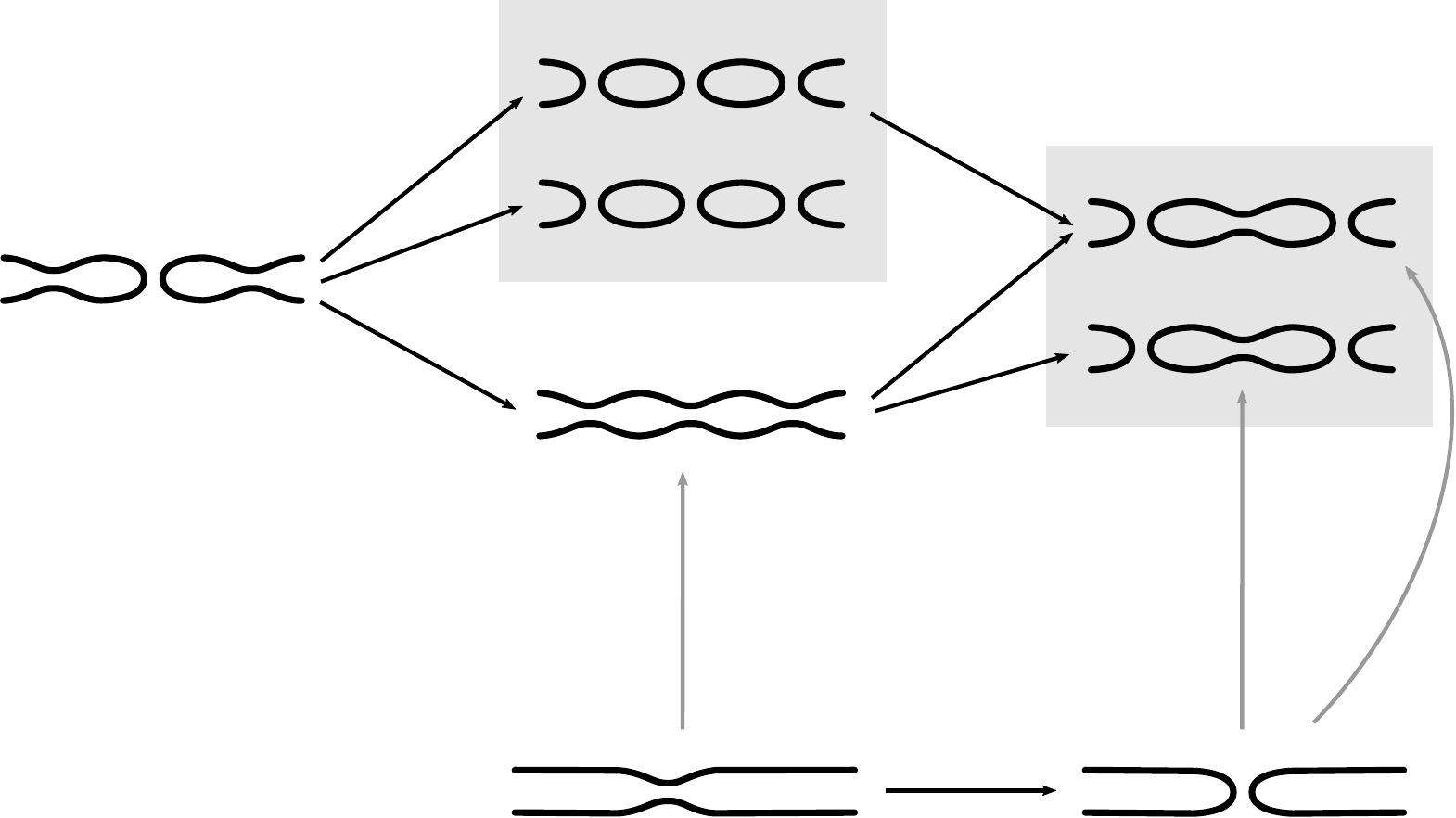}
 	\caption{Here we show the cochain map $g^2_\G \co E^2_\G(D) \rightarrow E^2_\G(D')$ for the move M2.  
	}\label{fig:mixed-R2-iso}
 \end{figure}

Also shown in Figure \ref{fig:mixed-R2-with-f} is a homotopy $h$.  The cochain map $g=f + \partial h + h \partial$ is shown in Figure \ref{fig:mixed-R2-with-g}.  Note that $g$ preserves the $\G$ filtration---it remains to check that $g$ induces an isomorphism on the $E^3_\mathcal{G}$ page.  This is equivalent to checking that the cone of the cochain map $g^2_\G \co E^2_\G(D) \rightarrow E^2_\G(D')$ is acyclic.  The map $g^2_\G$ is shown in Figure \ref{fig:mixed-R2-iso} with those components that are necessarily isomorphisms indicated.  Again, the reader will observe that successive Gauss elimination results in the zero complex.
\end{proof}


\begin{proposition}
	\label{prop:M3}
	Suppose  that the involutive diagram $D$ differs from the involutive diagram $D'$ by the move M3.  Then the cohomologies $\Ht^j(D)$ and $\Ht^j(D')$ are bifiltered isomorphic.  Furthermore, there are isomorphisms of the induced spectral sequences starting with the $E^2_\F$ and with the $E^3_\G$ pages.
\end{proposition}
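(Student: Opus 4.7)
The plan is to follow exactly the strategy laid out in Subsection \ref{subsec:outline_invariance_proof} and applied already for M1 and M2, with the additional complication that the M3 move involves six crossings and so the underlying (non-involutive) ``base'' map is already a composite. First I would decompose the pair $(D,D')$, viewed locally, into a sequence of standard Reidemeister moves relating the two underlying (non-involutive) tangles; as indicated in the introduction, this requires a composite of ten Bar-Natan-type maps between Khovanov complexes of intermediate diagrams. Composing these gives a cochain map $f_0 \co \CKh^j(D) \to \CKh^j(D')$ commuting with the Khovanov differential $d$, and inducing an isomorphism on Khovanov cohomology.

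Next I would measure the failure of $f_0$ to commute with the total differential $\partial = d + d_\tau$, namely the map $f_0\tau + \tau f_0$, and build a correction $f_1 + f_2 + \cdots$ of strictly lower $i$-degree so that $f = f_0 + f_1 + \cdots$ satisfies $f\partial + \partial f = 0$. Because $f_0$ arises from a composite of ten local moves, each of these lower-order correction terms will itself be a composite of maps associated to intermediate diagrams; in practice one produces them one by one by following the perturbation through the composite, starting from the ends of the composite where the failure of $\tau$-equivariance originates. The resulting $f$ is $\F$-filtered by construction, and since its $\F$-filtration preserving part is $f_0$, the induced map $f^2_\F \co E^2_\F(D) \to E^2_\F(D')$ is the Khovanov isomorphism.

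The map $f$ will generally not preserve the $\G$-filtration. I would then introduce a chain homotopy $h$ (again built piece-by-piece through the ten-step composite) and set $g = f + \partial h + h\partial$; the goal is to choose $h$ so that every component of $g$ lowers the $\G$-filtration by a non-positive amount. This is a purely local matching exercise: for each component of $f$ that violates the $\G$-filtration, one writes down a cobordism that, after commutation with $\partial$, cancels the offending term against a lower-order term and produces only $\G$-admissible contributions. Since $g$ is homotopic to $f$ through $\partial$, it still induces an $\F$-filtered isomorphism and in particular $g^*\co \Ht^j(D)\to \Ht^j(D')$ agrees with $f^*$ on cohomology.

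Finally, using the explicit description of $E^2_\G$ from Subsection \ref{subsec:concreteE2G}, I would verify that $g^2_\G \co E^2_\G(D)\to E^2_\G(D')$ induces an isomorphism on $E^3_\G$, equivalently that its cone is acyclic. After delooping all the $\tau$-equivariant circles created by smoothings of the six local crossings, many components of the cone differential (double-saddle maps producing or killing an equivariant circle pair, together with the corresponding components of $g^2_\G$) become isomorphisms between the $v_{\pm\pm}$-decorated summands. Successive Gauss elimination along these isomorphism components, as in the proofs of Propositions \ref{prop:M1} and \ref{prop:M2}, should collapse the cone to the zero complex. The main obstacle is the sheer bookkeeping in the first two steps: writing down $f_1, f_2, \ldots$ and the homotopy $h$ for a ten-step composite while keeping careful track of filtration degrees; once $g$ is correctly assembled, the final Gauss elimination step is mechanical.
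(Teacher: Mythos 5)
Your outline of the first three steps (constructing $f_0$ by composing ten Bar-Natan maps, correcting it by a lower-order term $f_1$ to obtain a $\partial$-cochain map $f$, then perturbing by a homotopy $h$ to obtain a $\G$-filtered $g=f+\partial h+h\partial$) matches the paper's treatment quite closely, including the use of the twelve-crossing reformulation of M3.

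The genuine divergence is in the final step, and it matters. You propose to verify that $g^3_\G$ is an isomorphism by delooping the $E^2_\G$ pages and running Gauss elimination ``as in M1 and M2,'' calling this step ``mechanical.'' The paper explicitly declines to do this: it notes that for M1 and M2 the argument relied on an explicit understanding of the $E^2_\G$ pages, whereas for M3 it instead uses a shortcut exploiting the fact that the source tangle $D_1$ is crossingless. Concretely, the paper constructs a second map $\overline{g}\co\CKht^j(D_2)\to\CKht^j(D_1)$ (itself a composite of another M3-type map and some off-axis/on-axis Reidemeister maps) and argues that the composition $\overline{g}\circ g$ is literally the identity chain map: any contribution other than the identity cobordism on the trivial $4$-braid would have to factor through a non-braid-like smoothing and therefore contain closed components, which is ruled out by quantum-degree preservation. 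This gives injectivity of $g^3_\G$ and surjectivity of $\overline{g}^3_\G$ essentially for free, and injectivity of $\overline{g}^3_\G$ then follows because $\overline{g}$ is a composite of maps already known to be injective on $E^3_\G$. The delooping/Gauss-elimination route you describe is not obviously wrong, but on a $12$-crossing $4$-braid tangle the $E^2_\G$ page is far from small, and the authors evidently judged that route impractical; your characterization of it as mechanical once $g$ is built is therefore the weak point of the proposal. If you pursue your version you would need to actually carry out that elimination, whereas the paper's identity-composition trick sidesteps it entirely.
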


\begin{proof}
The move M3 relates two tangles each of 6 crossings.  To make our proof a little more tractable, we consider an equivalent formulation of the move M3 which relates the $0$-crossing $4$-braid tangle to a $12$-crossing $4$-braid tangle.  These tangles appear as $D_1$ and $D_2$ in Figure \ref{fig:monster}.  

We now proceed as we have done before for the moves M1 and M2.  The first part of this procedure is to write down a cochain map
\[ f_0 \co \CKh(D_1) \rightarrow \CKh(D_2) \]
that induces an isomorphism on Khovanov cohomology.

Let us look more closely at Figure \ref{fig:monster}.  The tangle $D_2$ may be obtained from the identity braid tangle $D_1$ by a sequence of six Reidemeister 2 moves followed by four Reidemeister 3 moves.  The intermediate tangle $D_{3/2}$ obtained after the six Reidemeister 2 moves is given in Figure \ref{fig:monster}.  The crossings have been labelled for future reference.  Four Reidemeister 3 moves that may be used to get from $D_{3/2}$ to $D_2$ are two that slide the crossing labelled $1$ over the $(2,4)$-arc and then over the $(3,5)$-arc, and then two that slide the crossing labelled $7$ under the $(9,11)$-arc and then under the $(8,10)$-arc.

\begin{figure}[ht]
	\labellist
	\tiny
	\pinlabel {$1$} at 242 245
	\pinlabel {$2$} at 198 217
	\pinlabel {$3$} at 172 197
	\pinlabel {$4$} at 222 197
	\pinlabel {$5$} at 198 177
	\pinlabel {$6$} at 242 147
	\pinlabel {$7$} at 242 117
	\pinlabel {$8$} at 198 87
	\pinlabel {$9$} at 222 67
	\pinlabel {$10$} at 172 67
	\pinlabel {$11$} at 198 46
	\pinlabel {$12$} at 243 17
	\pinlabel {$1$} at 312 146
	\pinlabel {$2$} at 382 200
	\pinlabel {$3$} at 356 180
	\pinlabel {$4$} at 356 220
	\pinlabel {$5$} at 332 200
	\pinlabel {$6$} at 400 146
	\pinlabel {$7$} at 312 45
	\pinlabel {$8$} at 382 99
	\pinlabel {$9$} at 356 118
	\pinlabel {$10$} at 356 75
	\pinlabel {$11$} at 329 99
	\pinlabel {$12$} at 400 45
	\endlabellist
	\includegraphics[scale=0.6]{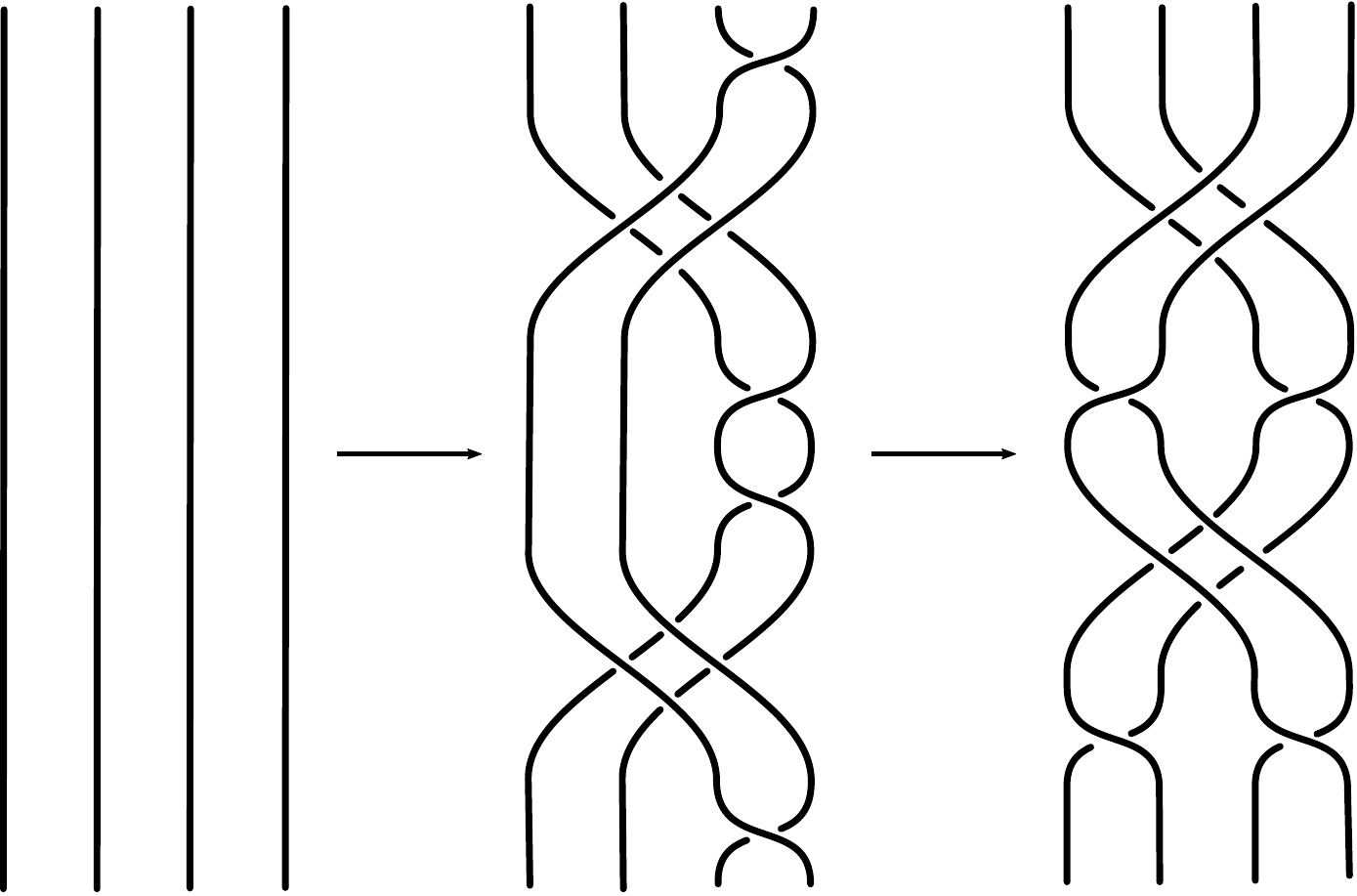}
	\caption{The tangle diagrams $D_1$, $D_{{3}/{2}}$, and $D_{2}$ (left to right).}
	\label{fig:monster}
\end{figure}

\begin{figure}[h]
	\labellist
	\tiny
	\pinlabel {$1$} at 2 145 \pinlabel {$1$} at 30 145
	\pinlabel {$3$} at 42.5 179
	\pinlabel {$5$} at 18 198
	\pinlabel {$9$} at 42.5 116
	\pinlabel {$11$} at 16 97
	\pinlabel {$2$} at 350.5 199 
	\pinlabel {$3$} at 339 187 
	\pinlabel {$4$} at 339 208.5 
	\pinlabel {$5$} at 327 199  
	\pinlabel {$8$} at 350 97
	\pinlabel {$9$} at 338 107
	\pinlabel {$10$} at 338 86
	\pinlabel {$11$} at 326 97
	\endlabellist
	\includegraphics[scale=0.64]{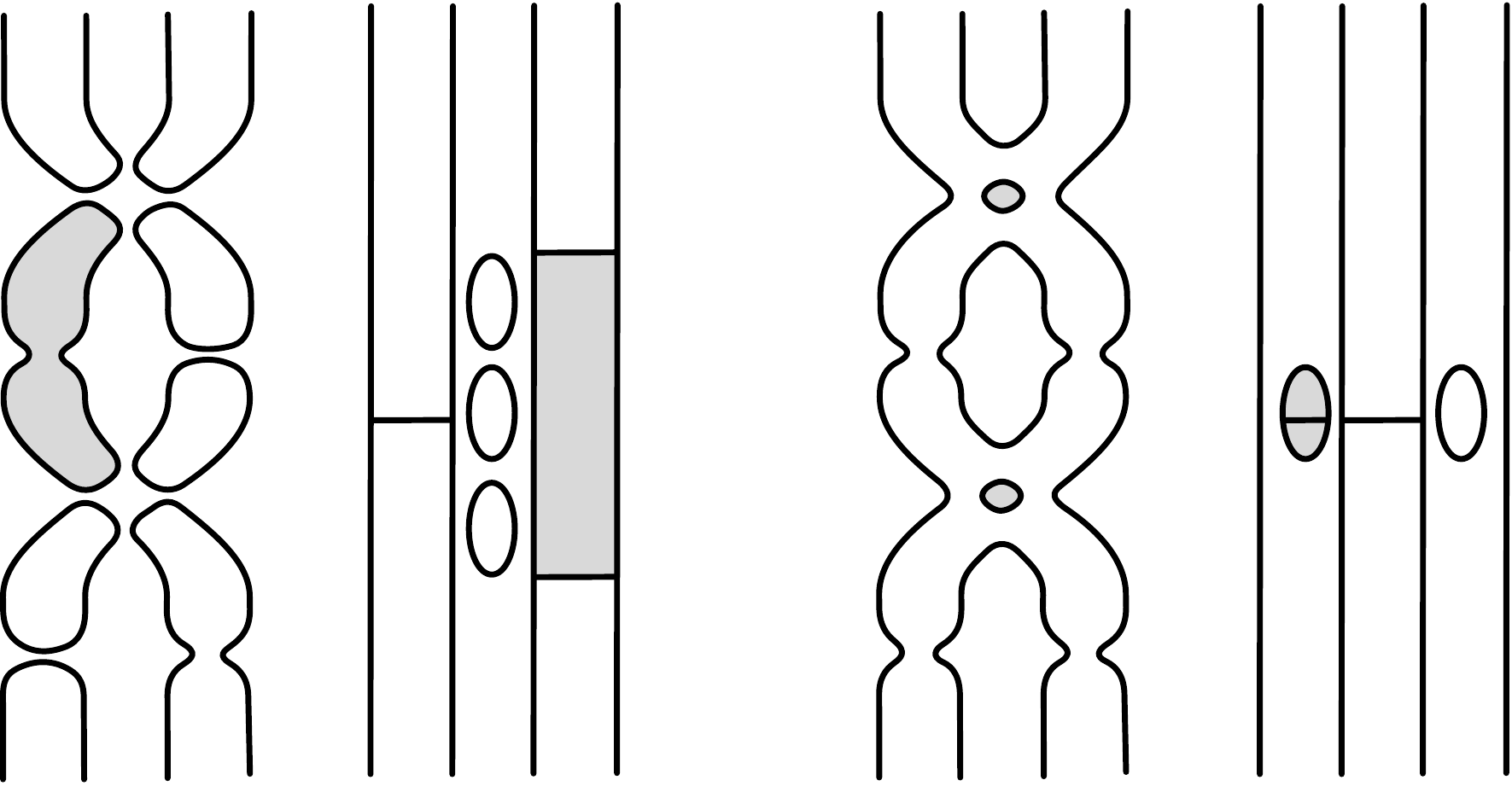}
	\caption{Two smoothings of $D_2$, together with the relevant cobordisms contributing to the map $f_0$.  On the left, the smoothing $010011001101$ contains four circles, one of which is the result of a pair of 1-handle attachments landing in the cycle specified by $(1,5,3,1,9,11)^*$. On the right, the smoothing $001100110011$ has two of three circles resulting from a birth followed by a 1-handle attachment; these are the target cycles $(2,3,5,4)$ and $(8,9,11,10)$.}
	\label{fig:sample-monster-cobordisms}
\end{figure}

Work of Khovanov \cite{kh1} and of Bar-Natan \cite{bncob} has given us explicit homotopy equivalences between the Khovanov cochain complexes of diagrams differing by Reidemeister moves.  Composing these gives rise to a chain homotopy equivalence
\[ f_0 : \CKh(D_1) \rightarrow \CKh(D_2) {\rm ,} \]
which may be represented, following Bar-Natan, by a matrix (in this case, a vector) of linear combinations of cobordisms.  Each such cobordism goes from the crossingless tangle $D_1$ to some smoothing of $D_2$ that lies in cohomological degree $0$ of the Bar-Natan complex associated with $D_2$.

We have computed this map $f_0$ and describe the result it as follows:  Each non-zero entry of the matrix contains the target smoothing of $D_2$.  In some cases, just knowing the target smoothing and that the matrix entry is non-zero is enough information to determine the matrix entry.  This is because our coefficient ring is $\bF$ and because each such cobordism has the same number of even-index handles as it has $1$-handles.  This last constraint comes from the fact that $f_0$ preserves the quantum degree. In those cases where it is not clear what the matrix entry should be we give an explicit picture of the cobordism.  Lines indicate the cores of attaching $1$-handles while $0$-handle additions are represented by circles.

We shall denote a smoothing of $D_2$ by a $12$-digit binary number.  The $i$th digit of the number indicates whether the $i$th crossing of $D_2$ is replaced by a $0$- or by a $1$-smoothing.  In those cases in which we draw the cobordism schematically, to determine the cobordism up to diffeomorphism rel boundary we also need to be specific about circles of the smoothing.  A circle component of a smoothing of $D_2$ may be identified by a cyclic sequence of numbers corresponding to crossings of $D_2$.  In the cases that arise, it is enough for us to indicate which circles do not bound discs in the cobordism and, of these, which circles are in the same component of some point of the initial (identity braid) smoothing of the cobordism.  We have listed such circles and put an asterisk next to those that are in the same component of some point of the initial smoothing.  See Figure \ref{fig:sample-monster-cobordisms} for a graphical explanation of the notation in some specific examples that arise. 

The following is a table in which each binary number corresponds to a non-zero entry of the matrix describing $f_0$.

\begin{tcolorbox}[title={In this table we give the components of $f_0$.}]
	010101010101-100110100110;
	010101010011-100110110010;
	001011001011-111000111000;
	001011001101-111000101100;
	001011010011-111000110010;
	001011010101-111000100110;
	010011010011-110010110010;
	010011010101-110010100110;
	010011001011-110010111000;
	001101001101-101100101100;
	001101001011-101100111000;
	001101010101-101100100110;
	001101010011-101100110010;
	010101001101-100110101100;
	010101001011-100110111000;
	000010111101-010000101111;
	011100100011-001110110001;
	001010110101-011000100111;
	000110111001-010100101011;
	010101110001-100110100011;
	010111100001-110110100001;
	000001101111-100000111101;
	000011101101-110000101101;
	001001101011-101000111001;
	001001110011-101000110011;
	010001100111-100010110101;
	010001110011-100010110011;
	010001101011-100010111001;
	010011100101-110010100101;
	000101101011-100100111001;
	000111101001-110100111001;
	001101101001-101100101001;
	001101110001-101100100011;
	010101100011-100110110001;
	010101101001-100110101001;
	110101100001-100111100001.
\tcblower
		010011001101 $\cobOIOOIIOOIIOI$ (1,5,3,1,9,11)* - 110010101100 $\cobIIOOIOIOIIOO$ (2,3,6,9,8,6)*.
		
\DrawLine

		100001101101; 
		101001101001; 
		100101101001; 
		101101100001; 
		000000111111; 
		011110100001; 
		001000110111; 
		010010101101; 
		011010100101; 
		000100111011; 
		010110101001;
		101101010010.
		
\DrawLine

		001100110011 $\cobOOIIOOIIOOII$ (2,3,5,4), (8,9,11,10).
		
\DrawLine

100111010010;
101101010100; 
100111010100; 
111001010010; 
111001010100; 
101101001010; 
100111001010; 
111001001010; 
101101100100; 
100111100100; 
111001100100; 
111001101000; 
101101101000; 
100111101000; 
110001100101; 
100011101001; 
100011001011; 
100011010011; 
100011010101; 
110001010011; 
110001010101; 
110001001011.

\DrawLine

110011010010 $\cobIIOOIIOIOOIO$ (2,3,6)*; 
110011010100 $\cobIIOOIIOIOIOO$ (2,3,6)*; \\
101101001100 $\cobIOIIOIOOIIOO$ (1,9,11), (8,10,12); 
100111001100 $\cobIOOIIIOOIIOO$ (1,9,11), (8,10,12); 
110011001010 $\cobIIOOIIOOIOIO$ (2,3,6)*; 
111001001100 $\cobIIIOOIOOIIOO$ (1,9,11), (8,10,12); \\
110011001100 $\cobIIOOIIOOIIOO$ (1,9,11), (8,10,12), (2,3,6)*; 
110011100100 $\cobIIOOIIIOOIOO$ (2,3,6)*; 
110011101000 $\cobIIOOIIIOIOOO$ (2,3,6)*; 
100011001101 $\cobIOOOIIOOIIOI$ (1,9,11)*;\\
110001001101 $\cobIIOOOIOOIIOI$ (1,9,11)*.

\end{tcolorbox}

Although $f_0$ commutes with the differential $d$, it does not commute with the differential $\partial$, and hence does not give a cochain map $\CKht^j(D_1) \rightarrow \CKht^j(D_2)$.  The failure to commute with $\partial$ is measured by the map
\[ \dtot f_0 + f_0 \dtot = \tau f_0 + f_0 \tau \co \CKht^j(D_1) \rightarrow \CKht^j(D_2){\rm .} \]

The first two rows of the table describing $f_0$ contain pairs of components of $f_0$ which are conjugate under $\tau$.  The third and fourth rows contain components of $f_0$ which commute with $\tau$.  The fifth and sixth rows contain all remaining components of $f_0$ - it is these components that contribute to the failure of $f_0$ to commute with $\partial$.

We are now going to give a map $f_1 \co \CKht^j(D_1) \rightarrow \CKht^j(D_2)$ which strictly lowers the $\F$-filtration such that we have both
\[ d f_1 + f_1 d = d_\tau f_0 + f_0 d_\tau \,\,\, {\rm and} \,\,\, d_\tau f_1 + f_1 d_\tau = 0 {\rm .} \]

This therefore gives a cochain map
\[ f  := f_0 + f_1 \co \CKht^j(D_1) \rightarrow \CKht^j(D_2) {\rm } \]
which induces a composition of Khovanov isomorphisms
\[ f^2_\F \co E^2_\F(D_1) \rightarrow E^2_\F(D_2) {\rm .} \]

We begin by giving two tables, defining the maps
\[ f'_1, f''_1 \co \CKht^j(D_1) \rightarrow \CKht^j(D_2) {\rm .}\]
We follow the same conventions as when describing $f_0$.  The reader will note that $f'_1$ and $f''_1$ map into $i$-degree $-1$.  In order that both preserve the quantum degree, each component of $f'_1$ and $f''_1$ shall have one more even handle than it has 1-handles.

\begin{tcolorbox}[title={The map ${f_1}'$.}]
	
	110001100100;
	110001101000;
	110001010010;
	110001010100;
	110001001010;
	110101001000;
	110101000100.
	\tcblower
	
	110001001100 $\cobIIOOOIOOIIOO$ (1, 9, 11), (8,10,12).
\end{tcolorbox}

\begin{tcolorbox}[title={The map ${f_1}''$.}]
	
	101101001000;
	101101000100.
	
\tcblower

	110011001000 $\cobIIOOOIOOIIOO$ (1,3,5), (2,3,6); 
	110011000100 $\cobIIOOOIOOIIOO$ (1,3,5), (2,3,6).

\end{tcolorbox}

We define
\[ f_1 = (\tau f'_1 + f'_1 \tau) + \tau f''_1 \]
(note that $\tau f''_1 = f''_1 \tau$).  It can then be checked that $f_1$ has the properties that we required of it.

Now, proceeding as for the moves M1 and M2, we wish to find a cochain map $g$, homotopic to $f$, that preserves the $\G$ filtration and that induces an isomorphism $g^3_\G \co E^3_\G(D_1) \rightarrow E^3_\G(D_2)$.  We begin by giving two maps $h_0, h_{\pm 1} \co \CKht^j(D_1) \rightarrow \CKht^j(D_2)$.  Note that $h_0$ maps into the degree $i=0$ summand of $\CKht^j(D_2)$ while $h_{\pm 1}$ maps into the degree $i = \pm 1$ summands.

\begin{tcolorbox}[title={The map $h_{\pm 1}$.}]
	
	110001100100;
	110001101000;
	110001010010;
	110001010100;
	110001001010;
	110101001000;
	110101000100;
	110001110011.
	
	\tcblower
	
	110001001100 $\cobIIOOOIOOIIOO$ (1, 9, 11), (8,10,12);
	110011001000 $\cobIIOOOIOOIIOO$ (2,3,6), (7,10,12,8,10,11);
	110011000100 $\cobIIOOOIOOIIOO$ (1,3,5), (1,9,6,8,9,11);
	110011010011 $\cobD$ (2,3,6)*.
	
\end{tcolorbox}

\begin{tcolorbox}[title={The map $h_0$.}]
	
		010101010011;
		001011010011;
		010011010011;
		010011010101;
		010011001011;
		010101110001;
		010111100001;
		001001110011;
		010001100111;
		010001110011;
		010001101011;
		010011100101;
		010101100011;
		110101100001;
		100111010010;
		100011101001;
		100011001011;
		100011010011;
		100011010101.
	\tcblower
		110011100100 $\cobA$ (2,3,6), (1,9,6,8,9,11);
		110011101000 $\cobB$ (1,3,5)*;\\
		110011010010 $\cobC$ (1,3,5), (1,9,6,8,12,10,7,11);\\
		110011010100 $\cobA$ (2,3,6), (1,9,6,8,12,10,8,9,11);\\
		110011001010 $\cobA$ (1,3,5), (1,9,11,10,8,12,10,7,11).
	
\end{tcolorbox}

We set $h = \tau h_0 + h_{\pm 1}$.  It may then be checked that $g := f + \dtot h + h \dtot\co \CKht^j(D_1) \rightarrow \CKht^j(D_2)$ preserves the $\G$ filtration.

It remains to show that $g^3_\G \co E^3_\G(D_1) \rightarrow E^3_\G(D_2)$ is an isomorphism.  When working with the M1 and M2 moves we relied on an understanding of the $E^2_\G$ page when we reached the corresponding stage of the argument.  In the case of the M3 move we shall make use of a shortcut based on the fact that the tangle $D_1$ is crossingless.

We start by defining a cochain map $\overline{g} \co \CKht^j(D_2) \rightarrow \CKht^j(D_1)$ as in Figure \ref{fig:double-monster}, and explained in detail in the caption.
\begin{figure}[ht]
	\labellist
	\small
	\pinlabel {$g'$} at 125 111
	\pinlabel {$p$} at 280 109
	\endlabellist
		\includegraphics[scale=0.5]{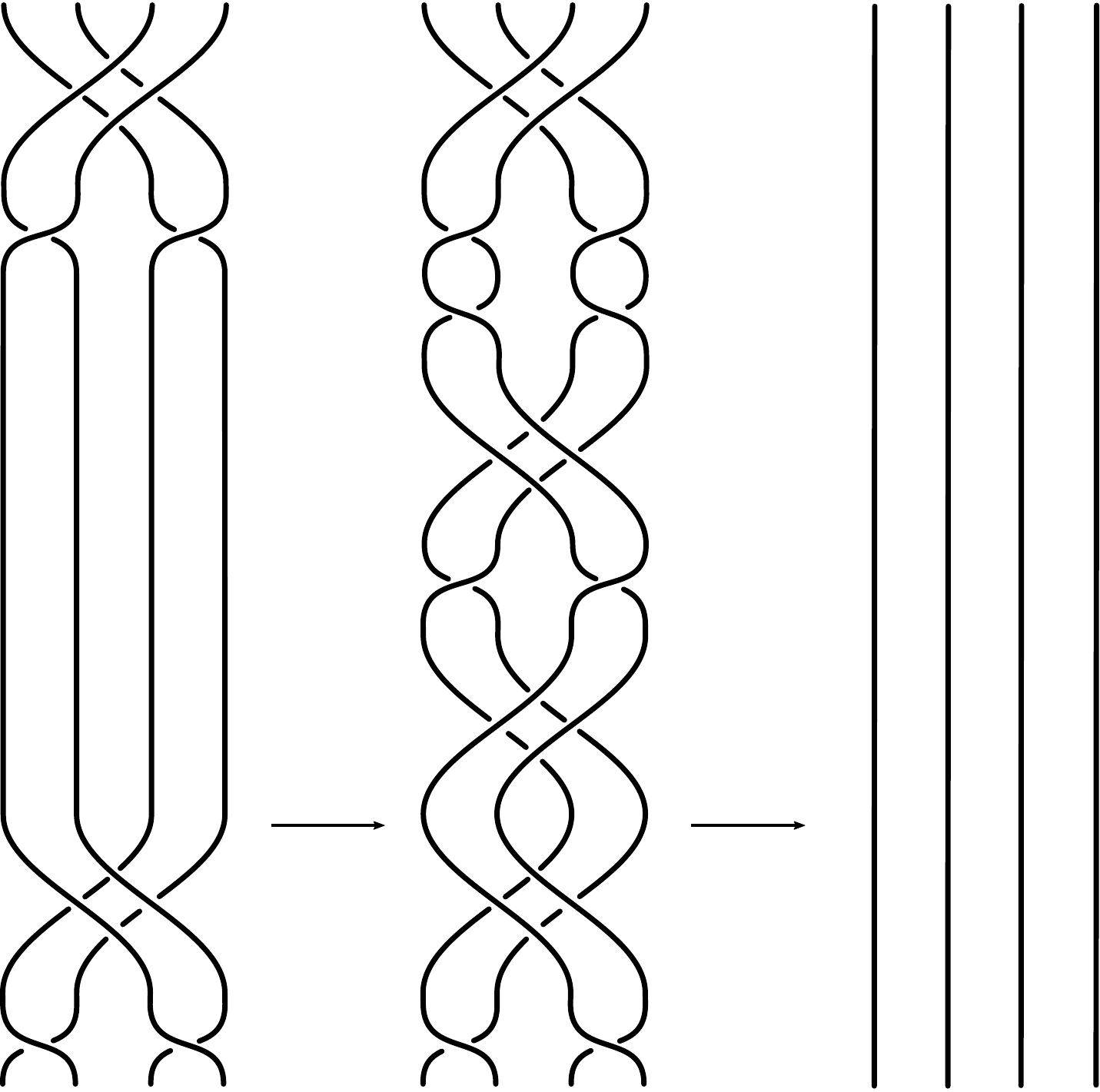}
	\caption{We construct the map $\overline{g} \co \CKht^j(D_2) \rightarrow \CKht^j(D_1)$.  Writing $D'_{3/2}$ for the middle diagram, notice (by turning the page upside down) that $D'_{3/2}$ differs from $D_2$ by an M3 move.  Therefore, by the construction above, we have a cochain map $g' \co \CKht^j(D_2) \rightarrow \CKht^j(D'_{3/2})$.  Furthermore, by composing a number of the cochain maps associated to the off-axis and on-axis involutive Reidemeister moves, there is a cochain map $p \co \CKht^j(D_{3/2}) \rightarrow \CKht^j(D'_{1})$.  We define~$\overline{g} = p \circ g'$.}
	\label{fig:double-monster}
\end{figure}
Then we consider the composition
\[\overline{g} \circ g \co \CKht^j(D_1) \rightarrow \CKht^j(D_1) {\rm .} \]
We note that the map $\overline{g} \circ g$ is given as a composition of cochain maps.  Each component of these cochain maps is described by a sum of tangle cobordisms between some smoothings of a $4$-braid.  On the other hand, the map $\overline{g} \circ g$ is described by a sum of cobordisms from the trivial $4$-braid to itself.

By referring to the constituent cochain maps, one may check that there is a contribution to $\overline{g} \circ g$ given by the composition of identity cobordisms on the trivial $4$-braid.  Furthermore, this has to be the only contribution, since any other contribution necessarily factors through a non-braid-like smoothing, and such a contribution must contain closed components if it is to preserve the quantum degree.

Hence we have determined that the map $\overline{g} \circ g$ is the identity map on complexes.  So we can conclude in particular that $g^3_\G$ is an injection, and ${\overline{g}}^3_\G$ is a surjection.
Furthermore, since $\overline{g}$ was defined as a composition of maps each of which (we now know) is injective on the $E^3_\G$ page, ${\overline{g}}^3_\G$ is also injective and hence an isomorphism.  Therefore $g^3_\G$ is an isomorphism as well.
\end{proof}

\section{Properties and examples}\label{sec:examples}

We conclude with some calculations in order to establish Theorem \ref{thm:mutation} and illustrate some of the properties of our refinement. We would like to be able to apply the invariant without being required to calculate it in full. To this end, before turning to examples, it is worth considering what we can determine about the support of the invariant $\Khredt(K)$ given a choice of strongly invertible diagram $D$ representing the strongly invertible knot $K$.  It is immediate from the definition of the $\G$-filtration on the underlying cochain complex $\CKht^j(D)$ that  \[k_{\min}=\textstyle-\frac{1}{2}n_-^E(D)-n_-^A(D) \quad\text{and}\quad k_{\max}=\frac{1}{2}n_+^E(D)+n_+^A(D)\]
provide absolute lower and upper bounds, respectively, on the possible $k$-gradings supporting $\Khredt(K)$. Here we denote the number of positive crossings in $D$ by $n_+(D)=n_+^A(D)+n_+^E(D)$ where $n_+^A$ counts positive on-axis crossings and $n_+^E$ counts off-axis positive crossings. Similarly, $n_-(D)=n_-^A(D)+n_-^E(D)$ where $n_-^A$ counts negative on-axis crossings while $n_-^E$ counts off-axis negative crossings. (This is a direct analogue of the fact that $i_{\min}=n_-(D)$ and $i_{\max}=n_+(D)$ provide {\em a priori} bounds on the $i$-grading of $\Khred(K)$.) This observation gives rise to the following useful trick:

\begin{lemma}[\bf The support lemma]\label{support_lemma}
Fix an involutive diagram $D$ with $m$ negative crossings and $n$ positive crossings for some strongly inversion of a knot $K$. If $\Khred(K)$ is non-trivial in grading $i=-m$ then $\Khredt(K)$ is non-trivial in bigrading $(i,k)=(-m,k_{\min})$ and, similarly, if $\Khred(K)$ is non-trivial in grading $i=n$ then $\Khredt(K)$ in non-trivial in bigrading $(i,k)=(n,k_{\max})$.
\end{lemma}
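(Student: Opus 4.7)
I plan to prove only the lower-bound statement; the upper-bound assertion is symmetric. Let $s_0$ denote the all-zero smoothing of $D$. Since $i=-m$ forces every crossing to be $0$-resolved, $s_0$ is the unique smoothing of $D$ in cohomological grading $-m$; using Remark~\ref{rem:k_grading_integral} (case~(3) of Definition~\ref{defn:kasagrading} is absent for a strong inversion), $w(s_0)=-\tfrac12 n_-^E-n_-^A=k_{\min}$, and switching any crossing from a $0$- to a $1$-resolution strictly increases $w$ by $\tfrac12$ or $1$, so $s_0$ is also the unique smoothing at $k$-grading $k_{\min}$. Thus the associated-graded piece of $\CKhtred^j(D)$ at bidegree $(i,k)=(-m,k_{\min})$ is precisely $V(s_0)^j$.

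Next I would construct a candidate $\partial$-cocycle. Because $i=-m$ is extremal there are no $d$-coboundaries there, so $\Khred^{-m,j}(K)=\ker(d)\cap V(s_0)^j$; this subspace is $\tau$-invariant since $\tau$ fixes $s_0$ and commutes with $d$, and being a nonzero $\bF[\tau]$-module it contains a nonzero $\tau$-fixed element $w$. Then $d_\tau w=(1+\tau)w=0$ in addition to $dw=0$, so $w$ is a $\partial$-cocycle sitting in bidegree exactly $(-m,k_{\min})$; since $\F^{-m-1}\Htredj=0$ trivially, $[w]\in\F^{-m}\Htredj$ is automatic.

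The class $[w]$ contributes nontrivially to $\Khredt^{-m,j,k_{\min}}(K)$ precisely when $[w]\notin\G^{k_{\min}+\tfrac12}\Htredj$. A chain-level analysis reduces this to a Tate-type obstruction: if some representative $w-\partial u$ of $[w]$ has no $V(s_0)$-component, then projecting onto $V(s_0)$ and using that $d$ strictly raises the $k$-grading yields $w=d_\tau(u|_{V(s_0)})=(1+\tau)(u|_{V(s_0)})\in(1+\tau)V(s_0)$; conversely, if $w=(1+\tau)u_0$ with $u_0\in V(s_0)$, then $w-\partial u_0=du_0$ is a representative of $[w]$ disjoint from $V(s_0)$, and is $\partial$-closed because $\partial(du_0)=d_\tau du_0=d\,d_\tau u_0=dw=0$. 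Hence the lemma reduces to choosing the $\tau$-invariant cocycle $w$ with nonzero image in the quotient $V(s_0)^{j,\tau}/(1+\tau)V(s_0)^j$.

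This last step is, I expect, the main obstacle. The quotient in question is exactly the $(-m,k_{\min})$-component of $E^2_\G$ and has as basis the $\tau$-symmetric standard generators $\mathrm{Sym}B\cap V(s_0)^j$ of Section~\ref{subsec:concreteE2G}; because $(-m,k_{\min})$ is extremal for both filtrations, no spectral-sequence differential of either $\F$ or $\G$ can map into it on any page. Decomposing an arbitrary $\tau$-invariant $d$-cocycle $w$ as $w_s+w_a$ with $w_s$ supported on $\mathrm{Sym}B$ and $w_a$ a combination of the antisymmetric pairs $b+\tau b$ (hence automatically in $(1+\tau)V(s_0)$), the $\tau$-equivariance of $d$ forces $dw_s\in(1+\tau)V(\text{neighbours})$, so $w_s$ descends to an $E^2_\G$-class annihilated by the induced differential of Proposition~\ref{prop:E2G_differential}. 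The delicate point is then to argue that the hypothesis $\Khred^{-m,j}(K)\ne0$ forces some such $w$ to have $w_s\ne0$; once this is established, extremality ensures survival to $E^\infty_\G$, producing the required nonzero class in $\Khredt^{-m,j,k_{\min}}(K)$. The dual argument at the all-one smoothing handles the $(n,k_{\max})$ statement.
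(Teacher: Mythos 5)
Your setup and chain-level reduction are correct: you rightly identify $V(s_0)$ as the sole chain-group summand in bidegree $(-m,k_{\min})$, correctly observe that $\Khred^{-m,j}(K)=\ker(d)\cap V(s_0)^j$ is a nonzero $\bF[\tau]$-module and so contains a nonzero $\tau$-fixed $d$-cocycle $w$, and correctly reduce nontriviality of $\Khredt^{-m,j,k_{\min}}(K)$ to the Tate-type condition $w\notin(1+\tau)V(s_0)^j$. (Your excursion through $E^2_\G$ and the ``survival'' discussion is redundant: the chain-level computation $[w]\in\G^{k_{\min}+\frac12}\Htredj\iff w\in(1+\tau)V(s_0)^j$ is already the whole story at this bidegree since $\F^{-m-1}=0$ and $\G^{k_{\min}}$ is everything.)

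But the step you flag as ``the main obstacle'' is a genuine gap, and it cannot be closed at an \emph{arbitrary} $j$: there is no reason the hypothesis $\Khred^{-m,j}(K)\ne0$ at a fixed $j$ should prevent $\ker(d)\cap V(s_0)^j$ from being a free $\bF[\tau]$-module, in which case every $\tau$-invariant element lies in $(1+\tau)V(s_0)^j$ and your criterion fails. The paper circumvents this by making a judicious choice of $j$. Namely, from $\Khred^{-m,*}(K)\ne0$ one deduces that $d$ is not injective at $i=-m$; since the edge maps out of $S_0$ have disjoint targets, this forces \emph{every} such edge to be a merge (a split is injective). Consequently every smoothing $\ne S_0$ has strictly fewer circles than it ``should,'' and the all-$v_-$ cochain $\zeta=v_-^{\otimes|S_0|}$ is the \emph{unique} nonzero cochain in its quantum grading $j_{\min}=n-2m-|S_0|+1$, across the entire complex. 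At this $j_{\min}$ everything you need becomes automatic: $V(s_0)^{j_{\min}}=\bF\zeta$ is one-dimensional and $\tau$-fixed, so $(1+\tau)V(s_0)^{j_{\min}}=0$, and moreover $\CKhtred^{j_{\min}}(D)=\bF\zeta$ has zero differential, so $\Khred_\tau^{-m,j_{\min},k_{\min}}(K)=\bF$ directly without any symmetrization or Tate analysis. You should therefore replace ``for some $j$ with $\Khred^{-m,j}\ne0$'' by this explicit extremal quantum degree, and supply the merge observation that both guarantees $\zeta$ is a $d$-cocycle and pins down uniqueness in its $j$-grading; with that addition your framework collapses to the paper's shorter argument.
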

\begin{proof}
We prove the negative case and leave the positive case, which is similar (or derivable via Proposition \ref{prp:mirror}), to the reader.

Let $D$ be an involutive diagram for $K$ containing $m$ negative crossings and $n$ positive crossings.  Then the cochain group $\CKhred{}^{-m,*}(D)$
has a basis consisting of all decorations of the all-zero resolution $S_0$ of $D$, in which the basepointed circle receives the decoration $v_-$.  Consider the cochain $\zeta=v_-^{\otimes|S_0|} \in \CKhred_\tau(D)$---that is, the cochain obtained by decorating each of the $|S_0|$ circles of the all-zero resolution with a $v_-$.  If $\Khred(K)$ is non-trivial in grading $i=-m$, this means that the Khovanov differential $d \co {\CKhred{}^{-m,*}(D)} \rightarrow {\CKhred{}^{-m+1,*}(D)}$ is not injective.  As a result, every resolution of $D$ which contains exactly one $1$-resolution has one fewer circle than $S_0$.

It follows that $\zeta$ is the unique non-zero cochain of quantum grading $n-2m-|S_0|+1$ (note in fact that every other non-zero cochain must have a strictly higher quantum grading).  Therefore 
\[ \Khred_\tau^{-m, n-2m-|S_0|+1, k_{\min}}(K) = \bF\]
since $\zeta$ is of homogeneous bigrading $(i,k)=(-m,k_{\min})$.
\end{proof}

\subsection{Detecting mutation.} We now give a detailed account of the calculation proving Theorem~\ref{thm:mutation}, revisiting the pair from Figure \ref{fig:mutants-small}. 
\begin{figure}[ht]
\includegraphics[scale=0.75]{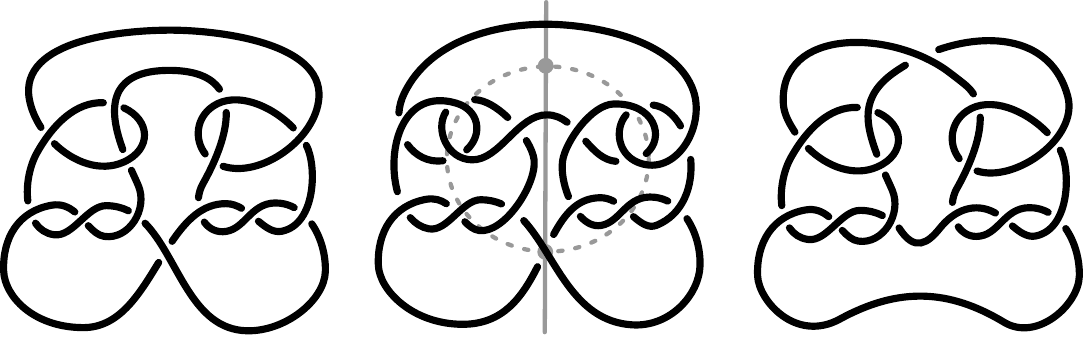}
\caption{The knot $K$ admits two distinct strong inversions (shown on the left and right), which generate the periodic symmetry of the centre diagram having fixed point set the dashed curve.}\label{fig:mutants-symmetries}
\end{figure}
We will denote by $K$ the knot before mutation, shown in Figure \ref{fig:mutants-symmetries}. Since this is a non-torus alternating knot, it must be hyperbolic and so admits at most two strong inversions \cite{Sakuma1986}. The symmetries are illustrated in Figure \ref{fig:mutants-symmetries}. Each of these diagrams has four negative crossings and, being alternating, it follows that $\Khred(K)$ must be non-trivial in $i=-4$ . Now apply the support lemma to the left and right diagrams of Figure \ref{fig:mutants-symmetries}:  in each diagram all the negative crossings are off-axis, so that $n_-(D)=n_-^E(D)=4$ and $\Khredt(K)$ must be non-trivial in $k=-2$. 

\begin{figure}[ht]
\includegraphics[scale=0.75]{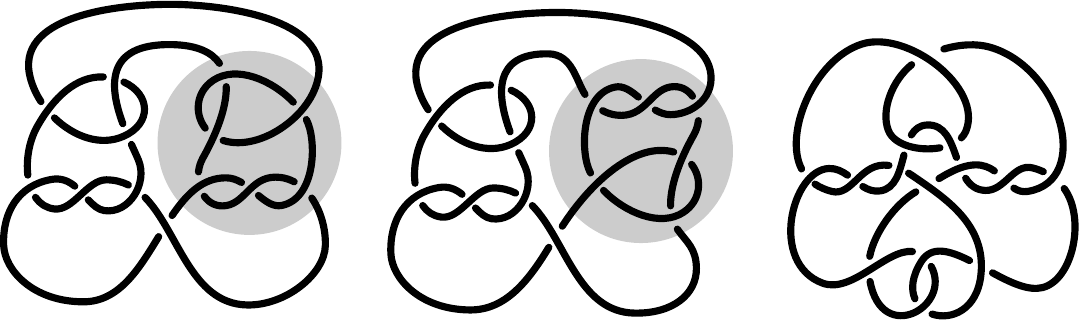}
\caption{A diagram for $K$ (far left), its mutant $K^\mu$ (centre), and a symmetric diagram $D^\mu$ for $K^\mu$ exhibiting a strong inversion (right).}\label{fig:mutants-detail}
\end{figure}

We now consider the knot $K^\mu$, obtained from $K$ by mutation as illustrated in Figure \ref{fig:mutants-detail}. Note that the strong inversion on $K^\mu$ is not immediately obvious and the minimality of the diagram is lost. Let $D^\mu$ be the strongly invertible diagram of the mutant $K^\mu$ shown, and note that there are once again four negative crossings, although this time two are on-axis and two off-axis. Since the unrefined Khovanov cohomology of $K^\mu$ agrees with that of $K$ \cite{Bloom2010,Wehrli2010}, we conclude that $\Khred(K^\mu)$ must be supported in $i=-4$. However, we verify that we have that $\Khredt(K^\mu)$ is non-trivial in $k=-3$,  
by applying the support lemma.  So we see that while $\Khredt(K)$ is supported in $k\ge-2$ (for either of the two possible strong inversions) we have that $\Khredt(K^\mu)$ is non-trivial in $k=-3$. 

This completes the proof of Theorem \ref{thm:mutation}. We leave it to the reader to identify various locations that twists can be added to $K$ to produce infinitely many strongly invertible alternating knots, with strongly invertible mutants detected by the $k$-grading following an identical argument to the one given above. 

\subsection{Separating other pairs with identical Khovanov cohomology.} There are many other examples of pairs of knots with identical Khovanov cohomology, and we consider some sporadic examples that are known to be non-mutant and admit strong inversions. 

The first pair are examples of Kanenobu knots: the knots $8_8$ and $10_{129}$ have identical Khovanov cohomology (and moreover identical Khovanov-Rozansky cohomologies \cite{lobb4}). See \cite{Watson2007} for this pair in context, and for a proof that they cannot be related by mutation. Appealing to symmetries, we have:

\begin{proposition}\label{prp8_8-vs-10_129} The $k$-grading separates the knots $8_{8}$ and $10_{129}$.\end{proposition}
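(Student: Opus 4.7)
The plan is to feed each of the three involutive diagrams of Figure \ref{fig:8_8-vs-10_129-small} through the support lemma (Lemma \ref{support_lemma}) and observe that the resulting $k$-grading ranges cannot be reconciled. For each diagram I would tabulate the four crossing counts $n_{\pm}^{A}$ (on-axis) and $n_{\pm}^{E}$ (off-axis), and then read off the absolute bounds $k_{\min}=-\tfrac{1}{2}n_-^{E}-n_-^{A}$ and $k_{\max}=\tfrac{1}{2}n_+^{E}+n_+^{A}$ on the $k$-support of the refined invariant for that inversion.

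The knot $8_8$ is alternating, so $\Khred(8_8)$ is non-trivial at both extremal $i$-gradings, and the support lemma then promotes the diagrammatic bounds on $k_{\max}$ and $k_{\min}$ to actual non-trivial classes of $\Khredt(8_8)$ at the corresponding extremal bigradings. Carrying out the count for the two strongly invertible diagrams of $8_8$, I expect to find $k_{\max}=4$ for the left-hand diagram (producing a non-trivial class at $k=4$ for that inversion) and $k_{\min}=-2$ for the centre diagram (forcing the invariant of that inversion to live in $k\ge -2$). Applying the same recipe to the right-hand $10_{129}$ diagram should yield $k_{\max}=3$ and $k_{\min}=-3$; since $\Khred(10_{129})\cong\Khred(8_8)$ is also non-trivial at both extremal $i$-gradings, the support lemma certifies a non-trivial class of $\Khredt(10_{129})$ at $k=-3$, while the bound $k_{\max}=3$ guarantees that this invariant vanishes in every grading $k>3$.

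To close the argument I would invoke Sakuma's tabulation \cite{Sakuma1986} to assert that the two pictured inversions exhaust the strong inversions of the $2$-bridge knot $8_8$ up to Sakuma equivalence. If $8_8$ and $10_{129}$ were isotopic then each of those inversions would be realized as a strong inversion on $10_{129}$, and the associated triply-graded invariants would have to coincide. However, the first inversion of $8_8$ contributes a non-trivial class at $k=4$ whereas the $10_{129}$ invariant vanishes in $k>3$; and the second is supported only in $k\ge -2$ whereas the $10_{129}$ invariant is non-trivial at $k=-3$. In either case the $k$-supports are incompatible, so no such isomorphism can exist.

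The main obstacle, beyond the routine crossing bookkeeping, is ensuring that one has accounted for every strong inversion of $10_{129}$ that could in principle produce a refined invariant matching one of the two $8_8$ invariants; this is the step where a careful appeal to Sakuma's classification of symmetries is needed, together with the observation that any additional strong inversion of $10_{129}$ would in turn be constrained by the support lemma applied to a corresponding involutive diagram, and one checks that these constraints cannot simultaneously reproduce both an extremal class at $k=4$ and support bounded below by $k=-2$.
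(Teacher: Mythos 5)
Your overall strategy is sound in spirit---compare $k$-supports across the three diagrams using crossing counts and the two-bridge fact that $8_8$ admits exactly two strong inversions---but the plan fails at the crucial step because the support lemma does not apply where you invoke it. The support lemma certifies a non-trivial class only at the \emph{extremal} diagrammatic grading $k_{\min}=-\tfrac12 n_-^E-n_-^A$ or $k_{\max}=\tfrac12 n_+^E+n_+^A$, and only when $\Khred(K)$ is non-trivial at the corresponding extremal $i$-grading for that diagram. For the diagrams the paper actually uses, the two decisive classes sit strictly inside those bounds: the caption of Figure~\ref{fig:8_8-vs-10_129} records $k(\zeta_{8_8})=4=k_{\max}-1$ and $k(\zeta_{10_{129}})=-3=k_{\min}+1$. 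In particular the diagram of $10_{129}$ has $k_{\min}=-4$, not $-3$ as you expected, and the support lemma (if it applied at all) would only speak to $k=-4$, where no claim is made. The paper instead exhibits explicit cochains: $\zeta_{8_8}\in\ker\partial^2_\G$ certifying $E^3_\G\ne 0$ in $k=4$, and $\zeta_{10_{129}}\in\ker\partial$ certifying $\Kht\ne 0$ in $k=-3$. You also have the two $8_8$ inversions transposed relative to the paper (it is the second diagram that supports both the $k=-3$ support-lemma class and the $k=4$ cocycle; the first is supported in $-2\le k\le 4$ but no non-triviality at $k=4$ is claimed for it), and your expected $k_{\min}$ values do not match the diagrams. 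Finally, your closing worry about enumerating \emph{all} strong inversions of $10_{129}$ is not needed: since $8_8$ has exactly two strong inversions, it suffices to show the single pictured inversion of $10_{129}$ is compatible with neither, which the two cocycles together with the support bounds already do.

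If you want to salvage a support-lemma-only argument, you would need to find different involutive diagrams whose extremal diagrammatic $k$-grading coincides with the homologically non-trivial class; the paper gives no evidence that such diagrams exist for the gradings in question, and the fact that the cocycles sit one step inside the bound suggests they do not.
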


\begin{proof} Consulting Figure \ref{fig:8_8-vs-10_129} we can calculate that the invariant of the first diagram of $8_8$ is supported in $k$-gradings \[-2\le k \le 4\] while the invariant of the second diagram of $8_8$ is supported in \[-3\le k \le 4 {\rm .}\]
To see that these two strong inversions are distinct, observe that the latter diagram has non-trivial invariant in $k_{\min}=-3$ by appealing to the support lemma (the reader can find a non-involutive alternating diagram with $n_-=5$ and supporting ordinary Khovanov cohomology in $i=-5$). We can also check that the invariant (and the whole of the $\G$ spectral sequence) of the diagram shown for $10_{129}$ is supported in $k$-gradings
	\[-4\le k \le 3 {\rm .}\]
To complete the proof it is enough to check that the second diagram of $8_8$ has $E^3_\G$ page which is non-trivial in grading $k=4$ and that the invariant for $10_{129}$ is non-trivial in grading $k=-3$. Cocycles establishing these facts are given in Figure \ref{fig:8_8-vs-10_129}. \end{proof}

\labellist
\small
\pinlabel {$\boldsymbol +$} at 140 40
	\tiny
	\pinlabel {$+$} at 66 29 \pinlabel {$+$} at 119 31  \pinlabel {$+$} at 123 13 \pinlabel {$+$} at 100 106
	\pinlabel {$+$} at 215 35 \pinlabel {$+$} at 200 18   \pinlabel {$+$} at 194 106
	\pinlabel {$-$} at 273 80 \pinlabel {$-$} at 348 80  
	\pinlabel {$-$} at 360 15 \pinlabel {$-$} at 300 23
	\endlabellist
\begin{figure}[h!]
\includegraphics[scale=0.75]{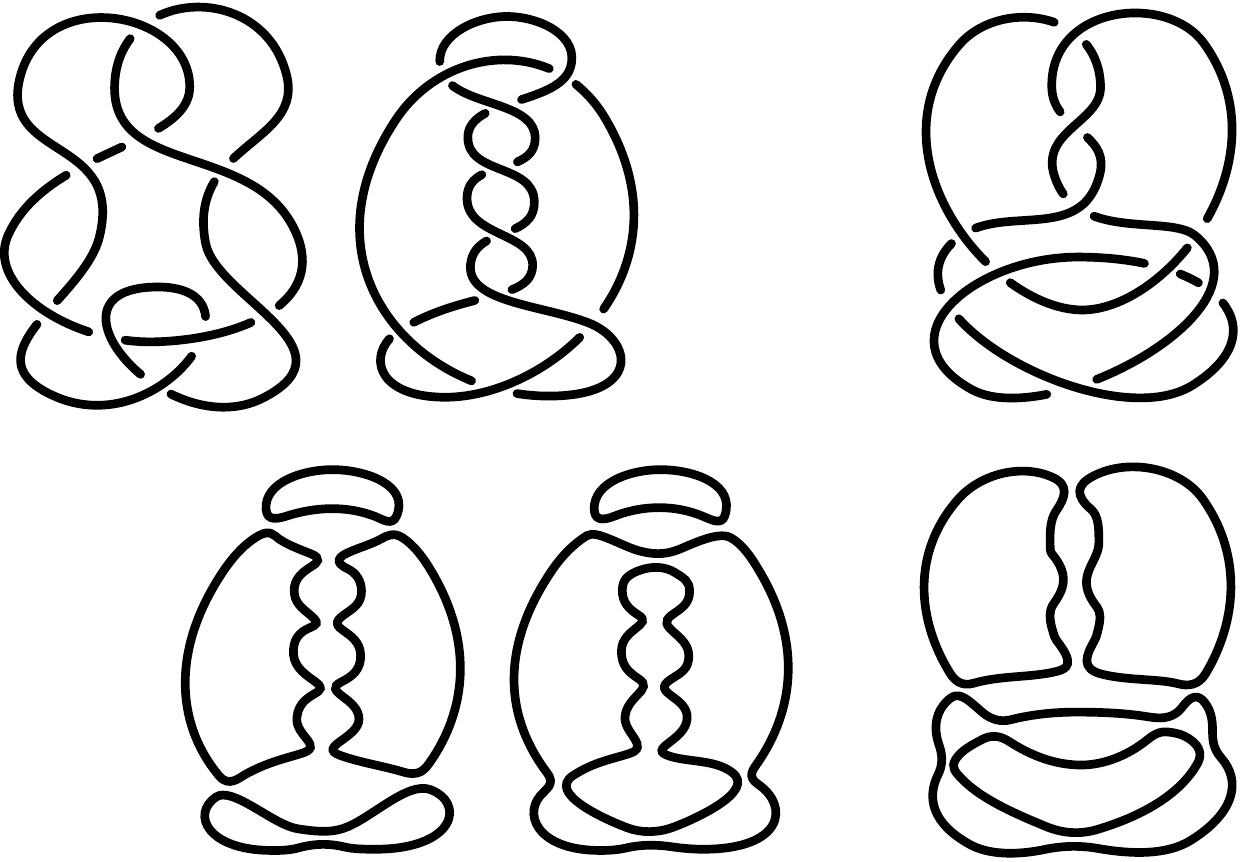}
\caption{The knots $8_8$ (top left and top centre) and $10_{129}$ (top right) share the same Khovanov cohomology, a fact that is explained in \cite{Watson2007}. The relevant enhanced states establishing the $k$-support are shown on the second line: call these cochains $\zeta_{8_8}$ (lower left) and $\zeta_{10_{129}}$ (lower right). Note that $k(\zeta_{8_8})=4=k_{\max}-1$ and $k(\zeta_{10_{129}})=-3=k_{\min}+1$, and check that $\zeta_{8_8}$ is in the kernel of $\partial^2_\G \co E^2_\G\to E^2_\G$ while $\partial(\zeta_{10_{129}})=0$.}\label{fig:8_8-vs-10_129}
\end{figure}

\labellist
\small
	\tiny
	\endlabellist
\begin{figure}[ht]
\includegraphics[scale=1]{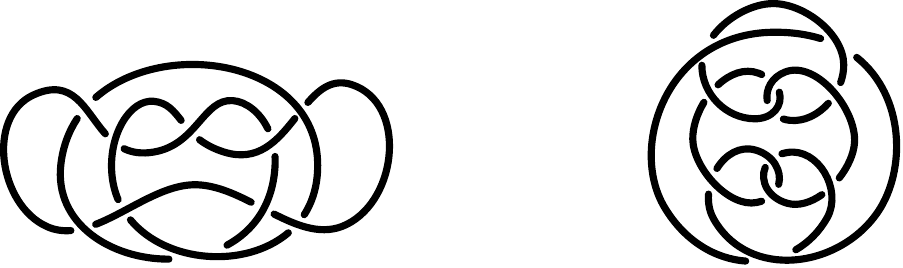}
\caption{The knots $10_{104}$ (left) and $10_{71}$ (right) share the same Khovanov cohomology.}\label{fig:10_104-vs-10_71}
\end{figure}

The knots $10_{71}$ and $10_{104}$ are alternating knots that share the same Jones polynomial and signature, so it follows that this pair cannot be separated by Khovanov cohomology. Note that this pair {\em is} separated by the Alexander polynomial (and hence they are not related by mutation) and that their common determinant is not a square (and hence they cannot be Kanenobu knots). On the other hand, they each admit a unique strong inversion \cite{Watson2017} so the refinement may be applied:

\begin{proposition}\label{prp:10_104-vs-10_71} The $k$-grading separates the knots $10_{71}$ and $10_{104}$. \end{proposition}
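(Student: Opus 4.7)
The plan is to follow the template of the proof of Proposition \ref{prp8_8-vs-10_129}, replacing the knots involved with the unique strong inversions on $10_{71}$ and $10_{104}$. That is, I would first read off from Figure \ref{fig:10_104-vs-10_71} the crossing data for each involutive diagram, separating positive from negative crossings and on-axis from off-axis crossings, and use the definition of the $\mathcal{G}$-filtration to compute the \emph{a priori} bounds
\[k_{\min}=\textstyle-\frac{1}{2}n_-^E(D)-n_-^A(D),\qquad k_{\max}=\frac{1}{2}n_+^E(D)+n_+^A(D)\]
for each of the two diagrams. Since both knots are alternating 10-crossing knots, $\Khred(10_{71})$ and $\Khred(10_{104})$ are each non-trivial in extremal $i$-gradings $i=-n_-(D)$ and $i=n_+(D)$, and the support lemma (Lemma \ref{support_lemma}) then forces $\Khredt$ to be non-trivial at $(i,k)=(-n_-,k_{\min})$ and at $(n_+,k_{\max})$ for each diagram.

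Next I would compare the resulting $k$-intervals. The expected outcome, consistent with the asymmetry of $10_{71}$ already remarked in the discussion of Figure \ref{fig:10_71-small}, is that the two diagrams produce genuinely different extremal $k$-gradings. In the spirit of the earlier proof, to conclude it suffices to show, for at least one extremal grading on one of the two knots, that $\Khredt$ vanishes outside a strict subinterval of the other knot's support. Here I would exhibit an explicit enhanced state $\zeta$ (a decoration of the oriented resolution, or of a resolution one away from it) realising the extremal $k$-value, and check by inspection that $\partial\zeta=0$ (or that $\partial_\G^2 \zeta=0$ on the $E^2_\mathcal{G}$ page, using the concrete description of Subsection \ref{subsec:concreteE2G}), so that $\zeta$ survives to contribute a non-trivial class to $\Khredt$ at that grading.

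To rule out non-trivial classes at the other knot's extremal grading, I would argue directly: either the \emph{a priori} bound on $k$ from the opposite diagram is already strict enough, or one computes the full $\mathcal{G}$-spectral sequence in the extremal $k$-layer and shows cancellation. For an alternating knot, the Khovanov cohomology is supported on two diagonals, so the chain groups contributing in any fixed $(i,j)$-bidegree are small; combined with the count of $\tau$-symmetric enhanced states of extremal weight, this reduces the computation of $E^3_\mathcal{G}$ at the extremal $k$-grading to a finite, tractable check.

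The main obstacle, as in Proposition \ref{prp8_8-vs-10_129}, is not the abstract machinery but the bookkeeping: one must correctly identify the type (on/off-axis, positive/negative) of every crossing in the two symmetric diagrams, and then verify the required (non)vanishing of specific cocycles at extremal $k$-gradings. Once this data is tabulated and the two candidate cocycles are written down explicitly from Figure \ref{fig:10_104-vs-10_71}, the separation of $10_{71}$ and $10_{104}$ via the $k$-grading follows in exactly the same manner as the separation of $8_8$ and $10_{129}$ proved just above.
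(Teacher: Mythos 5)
Your proposal is correct and follows the paper's approach: apply the support lemma to the two involutive diagrams in Figure \ref{fig:10_104-vs-10_71} and compare the resulting $k$-ranges. In fact the paper's argument is the simpler branch of your contingency plan---the \emph{a priori} bound already suffices, since the $10_{104}$ diagram forces $-3\le k\le 3$ while the support lemma applied to the (non-minimal) $10_{71}$ diagram with $n_+^A+n_+^E=3+2$ gives a non-trivial class at $k=4$---so no explicit cocycle or $E^3_\mathcal{G}$ computation is needed here, in contrast to the $8_8$ versus $10_{129}$ case.
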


\begin{proof}Consult Figure \ref{fig:10_104-vs-10_71} and apply the support lemma: In the case of $10_{104}$, we have $n_\pm=n_\pm^A+n_\pm^E=1+4=5$ that $\Khredt(10_{104})$ is non-trivial in bigradings $(i,k)=\pm(5,3)$ and supported in $k$-gradings $-3\le k\le 3$. On the other hand, in the case of the non-minimal diagram of $10_{71}$ shown, we have that $n_+=n_+^A+n_+^E=3+2=5$ so the support lemma ensures that $\Khredt(10_{71})$ is non-trivial in bigrading $(i,k)=(5,4)$.  \end{proof}

\subsection{A remark on amphicheirality.}\label{sub:amph} Following Sakuma \cite{Sakuma1986} (see also \cite[Section 5]{Watson2017}), it is possible to use invariants of strong inversions to rule out amphicheirality. In particular:

\begin{proposition}[compare Sakuma {\cite[Proposition 3.4 (1)]{Sakuma1986}}] \label{prp:amph}
Let $K$ be an amphicheiral knot, and suppose that $K$ admits a unique strong inversion up to conjugacy. Then $\Kht^{i,j,k}(K^*) \cong \Kht^{-i,-j,-k}(K)$ where $K^*$ denotes the strongly invertible mirror image of $K$ obtained by reversing orientation on $(S^3, \tau)$ by reflecting in a sphere containing the axis $\boldsymbol a$. \qed
\end{proposition}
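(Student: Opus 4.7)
The plan is to reduce the proposition to Proposition \ref{prp:mirror}, the general involutive mirror formula, by using the uniqueness hypothesis to show that $K$ and $K^*$ represent the same strongly invertible knot up to Sakuma-equivalence.

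First, I would verify that $K^*$, as constructed by reflecting in a sphere containing the axis $\boldsymbol{a}$, is a strongly invertible knot whose underlying (unoriented) knot is isotopic to $K$. Amphicheirality of $K$ ensures that reversing the orientation of $S^3$ produces a knot isotopic to $K$ itself, and conjugating the standard involution $\tau$ by this reflection yields an involution $\tau^*$ which continues to reverse orientation on the knot; hence $(K,\tau^*)$ defines a genuine strongly invertible knot in the sense of Sakuma.

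Next, I would invoke the uniqueness hypothesis: since $K$ admits a unique strong inversion up to Sakuma-conjugacy, the two strongly invertible knots $(K, \tau)$ and $(K^*, \tau^*)$ share an underlying knot and must therefore be Sakuma-equivalent. By Proposition \ref{prop:sakumaequivalenceclasses}, any pair of involutive diagrams representing them is related by a finite sequence of involutive Reidemeister moves together with (possibly) the global move of Figure \ref{fig:global}. Since $\Kht^{i,j,k}$ is invariant under all involutive Reidemeister moves (Section \ref{sec:inv}) and clearly invariant under the global move of Figure \ref{fig:global} as well, this yields $\Kht^{i,j,k}(K) \cong \Kht^{i,j,k}(K^*)$.

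Finally, Proposition \ref{prp:mirror} gives $\Kht^{i,j,k}(K^*) \cong \Kht^{-i,-j,-k}(K)$ for any involutive link, and combining with the previous identification completes the proof. The principal subtlety lies in the first step: one must check carefully that the reflection in a sphere containing $\boldsymbol{a}$ intertwines with $\tau$ to give a well-defined orientation-preserving involution of $S^3$ that reverses the orientation of $K$, so that the resulting object genuinely lies in the class to which the uniqueness hypothesis applies. Once this is clear, the remaining steps are essentially mechanical applications of results already in place.
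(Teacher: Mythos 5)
Your proof is correct and spells out the argument the paper leaves implicit: compose the isomorphism coming from Sakuma-equivalence (amphicheirality plus the uniqueness hypothesis, via Proposition \ref{prop:sakumaequivalenceclasses} and the invariance results of Section \ref{sec:inv}) with the mirror symmetry of Proposition \ref{prp:mirror}, yielding $\Kht^{i,j,k}(K) \cong \Kht^{-i,-j,-k}(K)$, which is the symmetry whose failure certifies non-amphicheirality. The step you flag as a potential subtlety is actually immediate: a reflection $\rho$ in a sphere containing $\boldsymbol{a}$ commutes with $\tau$, so $\rho\tau\rho^{-1}=\tau$ and $(K^*,\tau)$ is a strong inversion of an isotopic copy of $K$ with no further verification required.
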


Thus, the absence of such a symmetry in the refinement establishes non-amphicheirality. This follows from the behaviour of the refined invariant under mirrors, which we now establish. 

\begin{proof}[Proof of Proposition \ref{prp:mirror}]
This follows along similar lines to the usual duality satisfied by Khovanov cohomology; see for instance \cite[Proposition 31]{kh1} or \cite[Proposition 3.10]{ras3}.

Let $D$ be an involutive diagram and $D^*$ the involutive mirror obtained by changing all the crossings of $D$ from under- to over-crossings.
Consider the vector space $\CKht^j(D)$ and, with respect to some ordering of the standard basis of decorated smoothings, express the differential $\partial \co \CKht^j(D) \rightarrow \CKht^j(D)$ as a square
matrix $M_\partial$.
This matrix has entries in $\bF$, but we can refine it by replacing the non-trivial entries with a pair in $\Z\times\frac{1}{2}\Z$ representing the $(i,k)$ grading of the relevant coefficient map.

Now choose the dual basis for the vector space $\CKht^{-j}(D^*)$ (the smoothings of $D^*$ are the same as the smoothings of $D$, and we call two decorations of a smoothing \emph{dual} if the decorations differ on every component of the smoothing).  Then one observes that the transpose of $M_\partial$ agrees with the differential on $\CKht^{-j}(D^*)$. 
\end{proof}

Proposition \ref{prp:amph} can be useful even in settings where $K$ appears to be amphicheiral, namely, when the knot (ignoring the strong inversion) satisfies $\Kh(K)\cong\Kh(K^*)$ as bigraded vector spaces. Note that, as observed in \cite[Section 5]{Watson2017}, there are exactly three knots of 10 or fewer crossings for which non-amphicheirality is {\em not} certified by Khovanov cohomology: $\{10_{48}, 10_{71},10_{104}\}$. Khovanov cohomology may be used, by way of a different construction appealing to the unique symmetry in each case, to certify that these three knots are indeed not amphicheiral \cite{Watson2017}. Similarly, consulting the proof of Proposition \ref{prp:10_104-vs-10_71}, we see that 
\[\Kht^{i,j,k}(10_{71})\ncong \Kht^{-i,-j,-k}(10_{71})\]
despite the fact that $\Kh^{i,j}(10_{71})\cong \Kht^{-i,-j}(10_{71})$ for all pairs $(i,j)$. In particular, we can calculate that the invariant is supported on $-3\le k \le 4$ (using the symmetric diagram in Figure \ref{fig:10_104-vs-10_71}) and we checked that the invariant is non-trivial in $k=4$ using the support lemma. 

\subsection{Small torus knots} To this point, we have applied our refinement of $\Khred(K)$ to strongly invertible knots $K$ without calculating the associated triply graded invariant $\Khredt(K)$ in full. While the ability to apply a cohomology theory by only appealing to part of the invariant is a desirable property, we would like to supply the reader with some basic calculations of   $\Khredt(K)$ for small knots $K$.  As a first step, consider the case of torus knots; recall that torus knots admit unique strong inversions \cite{Schreier1924}.  Fix the notation $T_{p,q}$ for $p,q$ relatively prime and positive -- these are the positive torus knots. For instance, $T_{2,3}$ is the right-hand trefoil knot or $3_1$ in Rolfsen's table. Indeed, the torus knot $T_{2,n}$ can be identified (up to mirrors) with the knot $n_1$, explaining the omission of these knots in Figure \ref{fig:table}. Recalling Khovanov's initial calculation of $\Kh(T_{2,n})$ in which the skein exact sequence makes an implicit appearance \cite{kh1}, we make a brief digression on the invariant $E_\mathcal{G}^3(K)$ for positive involutive links. 

Suppose that $D$ is a positive diagram and fix a distinguished equivariant pair of off-axis crossings. If $D_0$ is the involutive diagram resulting from the simultaneous $0$-resolution of this pair and, similarly,  $D_1$ is the involutive diagram resulting from their simultaneous $1$-resolution, then there is a subcomplex $E_\mathcal{G}^2(D_1)\hookrightarrow E_\mathcal{G}^2(D)$ and a short exact sequence of cochain complexes \[0\to E_\mathcal{G}^2(D_1)\to E_\mathcal{G}^2(D)\to E_\mathcal{G}^2(D_0)\to 0\]
There are some grading shifts that need to be worked out in order to make use of this decomposition in practice. For the quantum grading $j$, the relevant shift is the same as for the usual skein exact sequence in Khovanov cohomology (applied twice). On the other hand, the shift in the cohomological grading $k$ on $E^2_\mathcal{G}$ differs slightly.  Using square brackets to denote a shift in the $k$-grading,
we have 
\[0\to E_\mathcal{G}^2(D_1)[\ell]\to E_\mathcal{G}^2(D)\to E_\mathcal{G}^2(D_0)\to 0\]
where $\ell=1+\frac{1}{2}\big(n_-^E(D_1)+n_-^{A_\mathbf{o}}(D_1)\big)+n_-^{A_\mathbf{r}}(D_1)$. As before, a distinction between on-axis and off-axis crossings must be made: Having made a choice of orientation for the strands affected by the resolution giving rise to $D_1$, $n_-^E(D_1)$ is the number of off-axis crossings carrying a negative sign, $n_-^{A_\mathbf{r}}(D_1)$ is the number of on-axis crossings carrying a negative sign for which the involution reverses orientation, and $n_-^{A_\mathbf{o}}(D_1)$ is the number of on-axis crossing carrying a negative sign for which the involution preserves orientation. Notice that, when $D_1$ is a diagram for a strong inversion, this latter value necessarily vanishes. We reiterate that this statement has been simplified by appealing to a positive diagram $D$; we leave the minor adjustments required for arbitrary diagrams to the reader. This culminates in a skein exact sequence of the form
\[
\begin{tikzcd}[column sep=0em]
& E_\mathcal{G}^3(D) \arrow{dr}{} \\
E_\mathcal{G}^3(D_1)[\ell] \arrow{ur}{}  && E_\mathcal{G}^3(D_0) \arrow{ll}{\delta}
\end{tikzcd}
\]
where the connecting homomorphism $\delta\co E_\mathcal{G}^3(D_0) \to E_\mathcal{G}^3(D_1)[\ell]$ increases the $k$-grading by 1.  Furthermore, since we are working over a field, this implies that there is a $(j,k)$-graded isomorphism $E_\mathcal{G}^3(D)\cong H^*(\operatorname{cone}(\delta))$. With minor adjustments to the discussion above, there is a similar skein exact sequence when an on-axis crossing is resolved.

\labellist
	\tiny
	\pinlabel {$k$} at 88 62
	\pinlabel {$i$} at 169 0
	\pinlabel {0} at 105 -5 \pinlabel {1} at 122 -5 \pinlabel {2} at 136 -5 \pinlabel {3} at 152 -5
	\pinlabel {0} at 87 13.5 \pinlabel {1} at 87 29 \pinlabel {2} at 87 45
	\endlabellist
\parpic[r]{
 \begin{minipage}{60mm}
 \centering
 \includegraphics[scale=0.65]{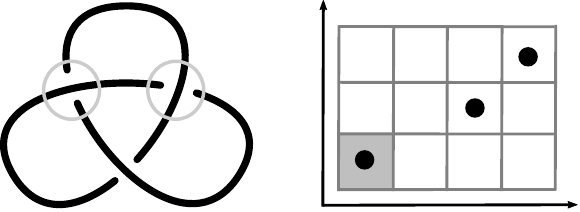}
 \captionof{figure}{$E^3_\mathcal{G}(T_{2,3})\cong \Khredt(T_{2,3})$.}
 \label{fig:trefoil-skein}
  \end{minipage}%
  }
This construction is best-illustrated in an explicit example. Consider the the trefoil as shown in Figure \ref{fig:trefoil-skein}; this diagram is $D$, and the unique equivariant pair of off-axis crossings is indicated. In this case, $E^3_\mathcal{G}(D_0)$ is a 1-dimensional vector subspace highlighted in grading $(i,k)=(0,0)$ and the two dimensional vector subspace that is not highlighted is  $E^3_\mathcal{G}(D_1)[\frac{3}{2}]$ since $n_-^{A_\mathbf{o}}(D_1)=1$ and $n_-^E(D_1)=n_-^{A_\mathbf{r}}(D_1)=0$. Perhaps the important thing to note here is that $D_1$ is an involutive link diagram, but it is not a diagram for a strongly invertible link in this case. For grading reasons, all the higher differentials in the $\mathcal{G}$-spectral sequence necessarily vanish, so we have in fact computed $E^3_\mathcal{G}(T_{2,3})\cong \Khredt(T_{2,3})$. The table uses $\bullet$ to indicate a copy of $\bF$. We will adhere to this convention for all of the calculations that follow, and we will also continue to record the $(i,k)$ bigrading in the plane and drop the labelled axis. For this particular example, there is no significant information lost in making this choice: recall that the invariant $\Khred(T_{2,3})$ is thin, that is, it is supported in a single diagonal in the $(i,j)$-plane. This diagonal, and hence the $j$-grading, is completely determined once we have the $s$ invariant in hand. In this case, $s(T_{2,3})=2$ so that the highlighted generator is in trigrading $(0,2,0)$. More generally, the relationship $s=2i-j$ holds for any thin knot. 

We observe that our refinement for the $(2,n)$-torus knots (relative to this unique strong inversion) have a particularly simple form. This is essentially an adaptation of Khovanov's original observation/calculation for this family; see Figure \ref{fig:small-torus-generic}. The reader is now equipped to induct in the number of equivariant pairs of crossings appealing to the skein exact sequence, with the trefoil discussed in detail above providing a base case. As before, for grading reasons, the $\mathcal{G}$-spectral sequence converges at the $E^3_\mathcal{G}$-page, so we are able to record $\Khredt(T_{2,n})$. 

\labellist
\small
	\tiny
	\pinlabel {$\frac{n+1}{2}$} at -14 122 
	\pinlabel {$\vdots$} at -7 97
	\pinlabel {$3$} at -7 67
	\pinlabel {$2$} at -7 48
	\pinlabel {$1$} at -7 30
	\pinlabel {$0$} at -7 10
	\pinlabel {$0$} at 10 -7 
	\pinlabel {$1$} at 30 -7 
	\pinlabel {$2$} at 48 -7
	\pinlabel {$3$} at 67 -7
	\pinlabel {$4$} at 85 -7
	\pinlabel {$5$} at 105 -7
	\pinlabel {$\cdots$} at 134 -7
	\pinlabel {$n$} at 160 -7
	\endlabellist
\begin{figure}[ht]
\includegraphics[scale=0.5]{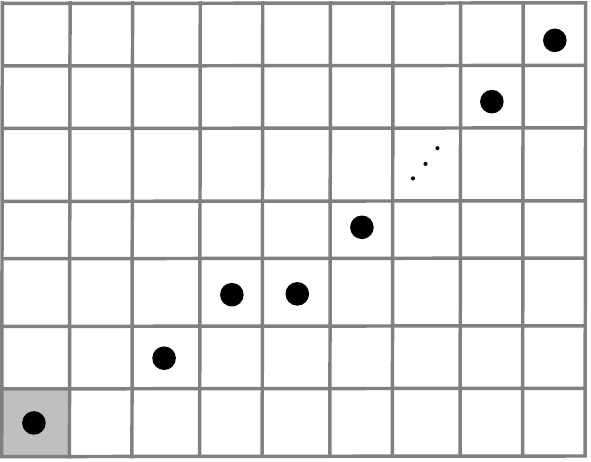}
\qquad
\includegraphics[scale=0.75]{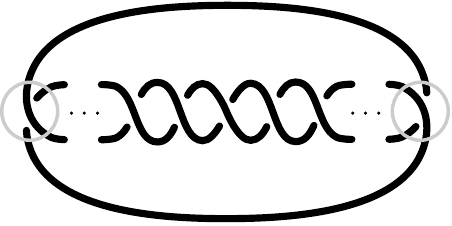}
\caption{The invariant $\Khredt(T_{2,n})$ for $n>0$ an odd integer (shown at left), that is, $T_{2,n}$ is a positive $(2,n)$-torus knot (shown at right, with the $\big(\frac{n-1}{2}\big)^{\!\text{th}}$ equivariant crossing-pair circled). 
The $j$-grading can be reconstructed from $j=2i+n-1$; recall that $s(T_{2,n})=n-1$ and the invariant $\Khred(T_{2,n})$ is supported in a single diagonal in the $(i,j)$-plane. }\label{fig:small-torus-generic}
\end{figure}

\labellist
	\tiny
	\pinlabel {0} at 145 -3 \pinlabel {1} at 162 -3 \pinlabel {2} at 176 -3 \pinlabel {3} at 192 -3 \pinlabel {4} at 207 -3 \pinlabel {5} at 223 -3
	\pinlabel {0} at 130 13.5 \pinlabel {1} at 130 29 \pinlabel {2} at 130 45 \pinlabel {3} at 130 61
	\endlabellist
\parpic[r]{
 \begin{minipage}{60mm}
 \centering
 \includegraphics[scale=0.65]{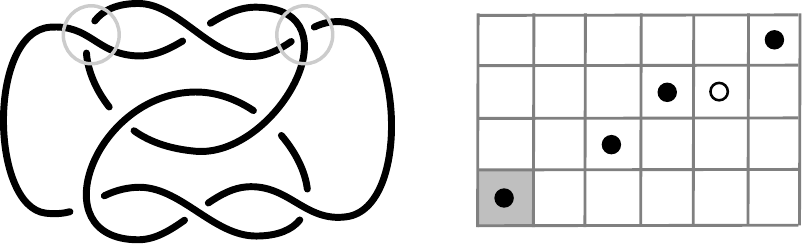}
 \captionof{figure}{$E^3_\mathcal{G}(T_{3,4})\cong \Khredt(T_{3,4})$.}
 \label{fig:8_19-skein}
  \end{minipage}%
  }
We will see further examples of our refinement for thin knots below; see Figure \ref{fig:table}. Before turning to these calculations, which also rely on the exact sequence described above, we should consider the first non-thin knots in the table: The knots $8_{19}$ and $10_{124}$ correspond to the torus knots $T_{3,4}$ and $T_{3,5}$, respectively. These enjoy the property of fitting into a larger (generically hyperbolic) family of positive pretzel knots. Consider the knots $P_{-2,3,q}$ for $q$ being an odd positive integer. The torus knots $T_{2,5}$, $T_{3,4}$, and $T_{3,5}$ correspond to the cases $q=1,q=3$, and $q=5$, respectively.  A diagram for the case $q=3$ is shown in Figure \ref{fig:8_19-skein}. Proceeding as before by resolving the pair of off-axis crossings shown, one checks that $E^3_\mathcal{G}(D_1)[\frac{5}{2}]\cong\bF_{(12,2)}\oplus \bF_{(14,3)}$ since $n_-^{A_\mathbf{o}}(D_1)=1$ and $n_-^E(D_1)=2$ (the shorthand $\bF_{(j,k)}$ indicates a copy of $\bF$ supported in bigrading $(j,k)$). On the other hand, $E^3_\mathcal{G}(D_0)\cong E^3_\mathcal{G}(T_{2,5)}$ has dimension 5, while $5=\dim\Khred(T_{3,4})\ge\dim\Khredt(T_{3,4})$. Consequently, there must be a non-trivial connecting homomorphism $\delta\co\bF_{(14,2)}\to \bF_{(14,3)}$   so that $E^3_\mathcal{G}(T_{3,4})\cong E^\infty_\mathcal{G}(T_{3,4})$ agrees with the dimension of $\Khred(T_{3,4})$. The latter invariant is supported in two diagonals, these being $j=2i+6$ (the $\bullet$ generators) and $j=2i+4$ (the $\circ$ generators). Note that $s(T_{3,4})=6$. Now for general $q>3$, including $T_{3,5} \simeq P_{-2,3,5}$, we proceed by induction as summarized in Figure \ref{fig:pretzels}; the reader will observe that the skein exact sequence splits in the inductive step.
\labellist
\small
	\tiny
	\pinlabel {$\frac{q+3}{2}$} at -10 125
	\pinlabel {$\vdots$} at -7 106
	\pinlabel {$5$} at -7 83
	\pinlabel {$4$} at -7 69
	\pinlabel {$3$} at -7 53
	\pinlabel {$2$} at -7 39
	\pinlabel {$1$} at -7 24
	\pinlabel {$0$} at -7 9
	\pinlabel {$0$} at 9 -7 
	\pinlabel {$1$} at 25 -7 
	\pinlabel {$2$} at 40 -7
	\pinlabel {$3$} at 55 -7
	\pinlabel {$4$} at 70 -7
	\pinlabel {$5$} at 85 -7
	\pinlabel {$6$} at 100 -7
	\pinlabel {$7$} at 115 -7
	\pinlabel {$\cdots$} at 165 -7
	\pinlabel {$q\!+\!2$} at 198 -7
	\endlabellist
\begin{figure}[ht]
\includegraphics[scale=0.75]{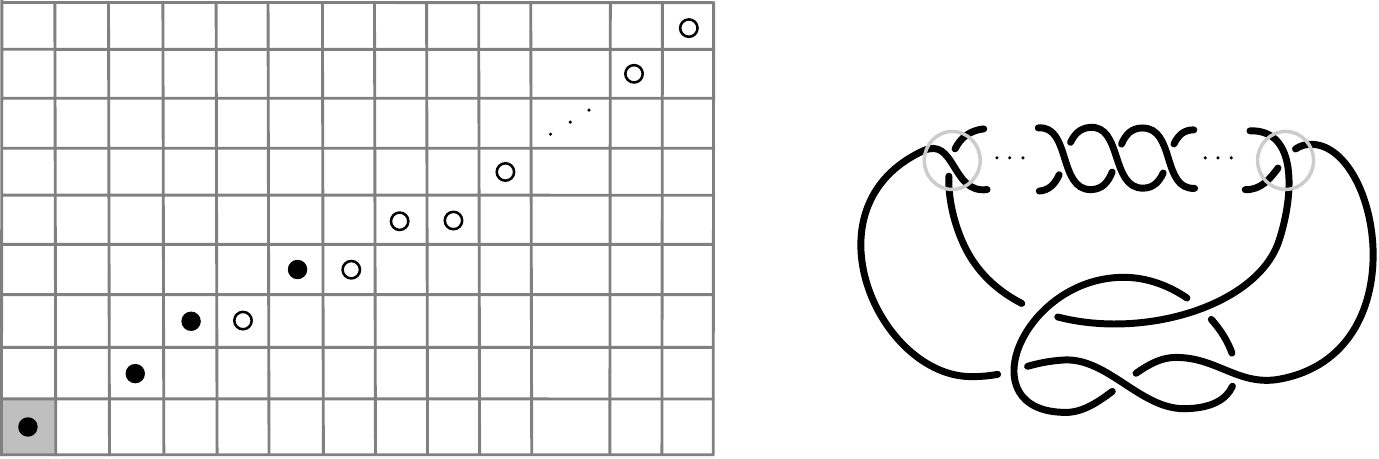}
\caption{The invariant $\Khredt(P_{-2,3,q})$ for odd integers $q>3$ (shown at left), that is, $P_{-2,3,q}$ is the (positive) $(-2,3,q)$-pretzel knot knot (shown at right). There are two supporting $(i,j)$-diagonals in general: We use $\bullet$ to indicate a copy of $\bF$ in $j=2i+s$ and $\circ$ to indicate a copy of $\bF$ in $j=2(i-1)+s$. Recall that $s(P_{-2,3,q})=q+3$. }\label{fig:pretzels}
\end{figure}

Apart from the appearance of the second diagonal, as expected, in the $(i,j)$-plane, this new family still enjoys the property that the $E^3_\mathcal{G}$ page agrees with the triply-graded invariant. This leads us to ask: 

\begin{question}Does the $\mathcal{G}$ spectral sequence always converge at the $E^3_\mathcal{G}$ page? \end{question}

\labellist
\small
\pinlabel {$s(4_1)=0$} at -80 985
\pinlabel {$s(5_2)=2$} at -80 910
\pinlabel {$s(6_1)=0$} at -80 820
\pinlabel {$s(6_2)=2$} at -80 730
\pinlabel {$s(6_3)=0$} at -80 655
\pinlabel {$s(7_2)=2$} at -80 550
\pinlabel {$s(7_3)=4$} at -80 445
\pinlabel {$s(7_4)=3$} at -80 345
\pinlabel {$s(7_5)=4$} at -80 240
\pinlabel {$s(7_6)=-2$} at -80 140
	\pinlabel {$s(7_7)=0$} at -80 35
	\endlabellist
\begin{figure}[h!]
\hspace{2cm}\includegraphics[scale=0.55]{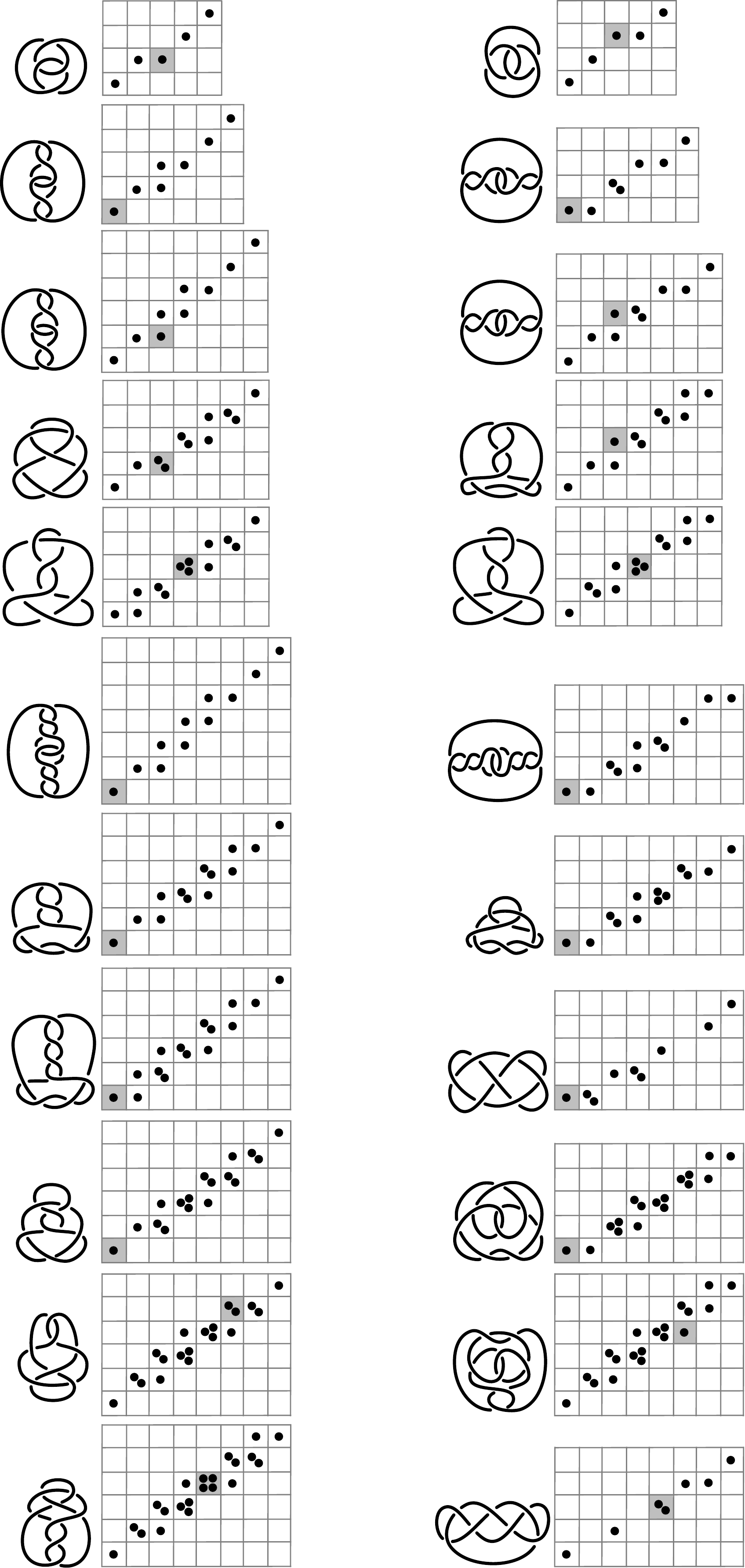}
\caption{Hyperbolic knots through 7 crossings: there are 11 examples, each of which admits a pair of strong inversions (this follows, for example, from the fact that each is a 2-bridge knot). The shaded box locates $(i,k)=(0,0)$ and the $j$ grading can be reconstructed from the $s$-invariant.}\label{fig:table}
\end{figure}

\subsection{A table of invariants of low-crossing hyperbolic knots}
We conclude with a table of invariants associated with prime knots of at most seven crossings (Figure \ref{fig:table}), omitting torus knots since these have been dealt with above. There are some observations worth making here. First, note that the knots $4_1$ and $6_3$ are amphicheiral, explaining the symmetry in the invariants in both cases; compare  \cite[Proposition 3.4 (2)]{Sakuma1986} and Proposition  \ref{prp:amph}. In both cases the symmetries are exchanged when taking the mirror. Second, the pair of symmetries on each knot is separated by the refinement. At present, we do not have an example of a prime knot admitting a pair of strong inversions that are not distinguished by this new invariant. Notice that the refinement can be only very mildly different on some pairs of strong inversions; see, in particular, $7_6$ in the table.

Finally, perhaps the most surprising behaviour is exhibited by the knots $7_4$ and $7_7$, where in each case one strong inversion has $d_\tau^\ast=0$ while the other has  $d_\tau^\ast\ne0$. This runs against the general principal that Khovanov invariants are relatively uninteresting for alternating knots.

\clearpage
\bibliographystyle{amsplain}

\end{document}